\numberwithin{equation}{section}
\newtheorem{theorem}{Theorem}[section]
\newtheorem{proposition}[theorem]{Proposition}
\newtheorem{corollary}[theorem]{Corollary}
\newtheorem{lemma}[theorem]{Lemma}
\theoremstyle{definition}
\newtheorem{remark}[theorem]{Remark}
\newcommand{\ds}{\displaystyle}
\newcommand{\hv}{{H_{V,\varepsilon}^s(\mathbb{R}^N)}}
\newcommand{\eq}{\eqref}
\def\r{\ref}
\def\c{\cite}
\def\k{\kappa}
\newcommand{\Ds}{{\dot{H}^s(\mathbb{R}^N)}}
\def\P{\mathcal P_\varepsilon}
\def\Fs{(-\Delta)^s}
\def\d{\mathrm{d}}
\def\rn{\mathbb{R}^N}
\def\R{\mathbb R}
\def\N{\mathbb N}
\def\wq{\infty}
\def\a{\alpha}
\def\be{\beta}
\def\ga{\gamma}
\def\la{\lambda}
\def\si{\sigma}
\def\var{\varphi}
\def\om{\omega}
\def\Om{\Omega}
\def\De{\Delta}
\newcommand{\dist}{{\rm dist}}
\newcommand{\va}{\varepsilon}
\newcommand{\La}{\Lambda}
\begin{document}

\title[Fractional Schr\"{o}dinger equations with decaying potentials]
{Existence and decays  of solutions for fractional Schr\"{o}dinger  equations  with  decaying potentials}

\thanks {The research was supported  by the Natural Science Foundation of China  (No. 12271196, 11931012).}
\author[ Y. Deng, S. Peng, X. Yang, ]{Yinbin Deng \textsuperscript{1}, Shuangjie Peng\textsuperscript{2} and Xian Yang \textsuperscript{3}}

{
%\author{Yinbin Deng}
\footnotetext[1]{School of Mathematics and Statistics \& Hubei Key Laboratory of Mathematical Sciences,
Central China Normal University,
Wuhan 430079, P. R. China. Email: ybdeng@ccnu.edu.cn.}
%\author{Shuangjie Peng}
\footnotetext[2]{School of Mathematics and Statistics  \& Hubei Key Laboratory of Mathematical Sciences,
Central China Normal University,
Wuhan 430079, P. R. China. Email: sjpeng@ccnu.edu.cn.}
%\author{Xian Yang}
\footnotetext[3]{ School of Mathematics and Statistics \& Hubei Key Laboratory of Mathematical Sciences,
Central China Normal University,
Wuhan 430079, P. R. China.   Email: yangxian@mails.ccnu.edu.cn.}
}

\begin{abstract}
   We revisit the following  fractional Schr\"{o}dinger   equation
   \begin{align}\label{1a}
    \varepsilon^{2s}(-\Delta)^su +Vu=u^{p-1},\,\,\,u>0,\ \ \ \mathrm{in}\ \R^N,
   \end{align}
where $\varepsilon>0$ is a small parameter, $(-\Delta)^s$ denotes the fractional Laplacian,  $s\in(0,1)$, $p\in (2, 2_s^*)$, $2_s^*=\frac {2N}{N-2s}$, $N>2s$, $V\in C\big(\R^N, [0, +\infty)\big)$ is a potential.  Under various  decay assumptions on $V$, we introduce a unified  penalization argument combined with a comparison principle and iteration process to detect an explicit threshold value $p_*$, such that the above problem admits positive concentration solutions if $p\in (p_*, \,2_s^*)$, while it has no positive weak solutions for $p\in (2,\,p_*)$ if $p_*>2$, where the threshold $p_*\in [2, 2^*_s)$ can be characterized explicitly by
\begin{equation*}\label{qdj111}
p_*=\left\{\begin{array}{l}
 2+\frac {2s}{N-2s} \ \ \ \text { if } \lim\limits_{|x| \to \infty} (1+|x|^{2s})V(x)=0,\vspace{1mm} \\
 2+\frac {\omega}{N+2s-\omega} \text { if }  0<\inf (1+|x|^\omega)V(x)\le \sup (1+|x|^\omega)V(x)< \infty \text { for some } \omega \in [0, 2s],\vspace{1mm}\\
 2 \ \ \ \ \ \ \ \ \ \ \ \ \ \  \text { if } \inf V(x)\log(e+|x|^2)>0.
\end{array}\right.
\end{equation*}
Moreover, corresponding to the various decay assumptions of $V(x)$, we obtain  the decay properties of the solutions at infinity.% including  the lower and upper decay rate.

{\bf Key words:} Fractional Schr\"{o}dinger  equations; penalization method; decaying potentials; positive solutions; decay estimates

{\bf AMS Subject Classifications:} 35J15, 35A15, 35J10.
\end{abstract}

\maketitle
\section{Introduction}\label{s1}
In this paper, we study the following  nonlinear fractional Schr\"{o}dinger  equation
\begin{equation}\label{eqs1.1}
\varepsilon^{2s}(-\Delta)^su +Vu=u^{p-1},\,\,\,u>0, \ \ \  \mathrm{in}\ \R^N,
\end{equation}
 where~$\varepsilon>0$, $N>2s$, $s\in(0,1)$,  $2<p<2_s^*:=\frac{2N}{N-2s}$, $V\in C\big({\R}^N, [0, \infty)\big)$ is an external potential,  the fractional Laplacian $(-\Delta)^s$, up to a normalization constant, is defined by
\begin{equation*}\label{eqs1.2}
(-\Delta)^su(x):=2P.V.\int_{\mathbb{R}^N}\frac{u(x)-u(y)}{|x-y|^{N+2s}}~\d y=2\lim\limits_{r\to 0}\int_{\mathbb{R}^N\backslash B_r(x)}\frac{u(x)-u(y)}{|x-y|^{N+2s}}~\d y.
\end{equation*}
 Problem $\eq{eqs1.1}$ arises from finding standing waves (i.e., $\psi(x,t)=e^{-iEt/\va}u(x)$) to the time-dependent fractional Schr\"{o}dinger equation
\begin{equation}\label{e2m}
i\va\frac{\partial \psi}{\partial t}=\varepsilon^{2s}(-\Delta)^s\psi +(V+E)\psi-|\psi|^{p-2}\psi, \ \ \  (x,t)\in \R^N\times \R_+,
\end{equation}
which was introduced by Laskin in \cite{las}  as a fundamental equation modeling  fractional Quantum Mechanics in view of a path integral over the L\'{e}vy flights paths.
Problem \eq{e2m} admits extensive applications in modeling diverse physical phenomena, especially for dealing with relativistic particles ($s = 1/2$).
See \cite{clp,ege2012} and the references therein for more backgrounds on \eq{e2m}.

When $\varepsilon>0$ is a small parameter, which is typically related to the Planck constant, from the physical prospective \eq{eqs1.1} is particularly important, since its solutions  as $\varepsilon\to 0$ are called semi-classical bound states. Physically, it is expected that in the semi-classical limit $\varepsilon\to 0$ there should be a correspondence between solutions of the equation \eq{eqs1.1} and critical points of the potential $V$, which governs the classical dynamics.

Recently, problem \eq{eqs1.1} has been extensively investigated. When $\varepsilon=1$ and $V\equiv \lambda>0$, it was proved in \cite{paj2012} that \eqref{eqs1.1} has a positive ground state, which was subsequently verified  by  Frank et al.  in \cite{Frank.L-CPAM-2016} to be non-degenerate and unique up to translation. When $\varepsilon\to 0$, inspired by the penalized idea introduced  in \cite{mp1996},   Alves and Miyagaki  showed that $\eq{eqs1.1}$ has a family of solutions concentrating at a local minimum of $V$ in the case $\inf_{x\in\rn}V(x)>0$  in \cite{co2016}.
For more results in this case, one can refer \cite{vab2017,jmj2014,ss2013} and the references therein.

If the external potential $V$ tends to zero at infinity (particularly $V$ is compactly supported), the situation is much more complicated since the action functional corresponding to \eq{eqs1.1} is typically not well defined nor Fr\'echet differentiable
on $H_{V, \varepsilon}^s(\R^N)$ (which is defined later). Even in the classical case $s=1$, this difficulty is not only technical. Compared with the classical case $s=1$, where the solutions usual decay exponentially, the solutions of the nonlocal problem \eqref{eqs1.1}  with $s\in (0,\,1)$ decay polynomially at infinity as we will see later.

Recently, problem \eqref{eqs1.1} with   potential vanishing at infinity was considered in \cite{xsc2019,xly2019}. For $2+\frac{2s}{N-2s}<p<2_s^*$, when $V(x)$ achieves its local minimum in a bounded domain and satisfies the following decay assumption
$$(\mathcal{V}_*)\,\,\ds \ \  \inf_{x\in\rn}V(x)|x|^{2s}>0,$$
it was proved in \cite{xsc2019} that problem \eqref{eqs1.1} has positive solutions for $\varepsilon>0$ sufficiently small. A natural question is  whether the restriction $p>2+\frac{2s}{N-2s}$ is optimal with respect to the decay assumption on the potential $V$.
Moreover, what about the existence and non-existence of the positive solutions if $V(x)$ decays to zero at infinity with various decay behaviors?

The aim in the present paper is three-fold: firstly, we try to detect a threshold  $p_*\in [2, 2^*_s)$ such that problem (\ref {eqs1.1}) exists  concentration solutions $u_\varepsilon$ as $\varepsilon\to 0$ if $p\in (p_*, 2^*_s)$, while it has no positive weak solutions for all $\varepsilon>0$ and $p\in (2, p_*)$ if $p_*>2$. Secondly, as far as we know, for various conditions on  $V(x)$ at infinity, the arguments used to deal with problem (\ref {eqs1.1}) are usually different. Hence, it is interesting that  we introduce a united framework under which problem (\ref {eqs1.1}) with $V(x)$ having various behaviors at infinity, including $V(x)\ge C>0$ in $\rn$ and $V(x)\to0$ at various rates at infinity, can be dealt with uniformly. Lastly, corresponding to various behaviors of $V(x)$ at infinity, we intend to explore the exact decay rates  of the solutions at infinity.

At first, we study the existence of positive solution $u_\varepsilon$. We  assume  that $V$ satisfies the following local assumption %($\mathcal{V}$)
:\\
($\mathcal{V}$) $V\in C(\rn,[0,+\wq))$, and there exists a bounded open set~$\Lambda \subset \R^N$~such that
$$
0<V_0=\underset{x\in \Lambda}{\rm inf}V(x)<\underset{x\in \partial\Lambda}{\rm min} V(x).
$$

Without loss of generality, we can assume   that $0\in\Lambda$. %and $\partial \La$ is smooth
To discuss the influence of decay property of $V$ on the existence and decay rate of solutions to problem (\ref {eqs1.1}), we also assume the following  various decays  on potential $V$:

$(\mathrm{\mathcal{V}_{fd}})$ {\bf (fast decay)}:
$$\ds \lim_{|x| \to \infty} (1+|x|^{2s})V(x)=0;$$

$(\mathrm{\mathcal{V}_{sd}})$ {\bf (slow decay or no decay)}:
  $$
  \ds 0<\inf_{x\in\rn} (1+|x|^\omega)V(x)\le \sup_{x\in\rn} (1+|x|^\omega)V(x)< +\infty \ \text{for  some} \ \omega \in [0, 2s];
  $$

$(\mathrm{\mathcal{V}_{lsd}})$ {\bf (lower slow decay  or no decay)}:
$$\ds \inf_{x\in\rn} (1+|x|^\omega)V(x)>0 \text { for some } \omega \in [0, 2s];$$

$(\mathrm{\mathcal{V}_{usd}})$ {\bf (upper slow decay)}:
$$\ds  \sup_{x\in\rn} (1+|x|^\omega)V(x)< +\infty \text { for some } \omega \in [0, 2s];$$

$(\mathrm{\mathcal{V}_{log}})$ {\bf (lower logarithmic decay)}:
$$\ds  \inf_{x\in\rn} V(x)\log(e+|x|^2)>0.$$

We say that $u$ is a weak solution to the equation $\eq{eqs1.1}$
if $u\in\hv$ satisfies
\begin{equation*}\label{445}
\varepsilon^{2s}\iint_{\R^{2N}}\frac{\big(u(x)-u(y)\big)\big(\var(x)-\var(y)\big)}{|x-y|^{N+2s}}
+\int_{\R^N}
V(x)u\var=\int_{\rn}|u|^{p-2}u\var,\quad \forall\ \var\in \hv,
\end{equation*}
 where
 $$
H_{V, \varepsilon}^s(\R^N):=\Big\{u\in L^{2_s^*}(\R^N): \iint_{\R^{2N}}\frac{|u(x)-u(y)|^2}{|x-y|^{N+2s}}+\int_{\rn}V(x)u^2<\infty \Big\}.
$$

%For convenience, hereafter, given $U\subset\rn$ and $\tau>0$, we denote $C^\tau(U)=C^{[\tau],\tau-[\tau]}(U)$
%with $[\tau]$ denoting the largest integer no larger than $\tau$.

\begin{theorem}\label{thm1.2}
Let $V$ satisfy ($\mathcal{V}$) and $p\in (2, 2_s^*)$ satisfy one of the following three assumptions:

$(\mathrm{\mathcal{Q}_1})$
$p>q_*:= 2+\frac{2s}{N-2s}$;

$(\mathrm{\mathcal{Q}_{2}})$  $p> q_\om:=2+\frac{\om}{N+2s-\om}$ and $(\mathrm{\mathcal{V}_{lsd}})$ {\bf (lower slow decay or no decay)};

$(\mathrm{\mathcal{Q}_{3}})$  $p> 2$ and $(\mathrm{\mathcal{V}_{log}})$ {\bf (lower logarithmic decay)}.

\noindent
Then there exists $\varepsilon_0>0$ such that for  $\varepsilon\in(0,\varepsilon_0)$, problem \eq{eqs1.1} has a positive weak solution $u_{\varepsilon}\in C^\sigma_{\mathrm{loc}}(\rn)\cap L^\wq(\rn)$ with $\sigma\in (0,\min\{2s,1\})$.
Moreover, $u_{\varepsilon}$ has a global maximum point  $x_{\varepsilon}\in \Lambda$  such that
   $$\lim\limits_{\varepsilon\to 0}V(x_{\varepsilon})=V_0~$$
   and
   \begin{equation}\label{yyy}
     u_{\varepsilon}(x)\leq\frac{C_\ga\varepsilon^{\gamma}}{\varepsilon^{\gamma}+|x-x_{\varepsilon}|^{\gamma}}
   \end{equation}
 for a  positive constant $C_\ga$ independent of $\va$, where $\gamma>0$ is a positive constant close to $N-2s$ from below if
 $(\mathrm{\mathcal{Q}_1})$ holds, close to $N+2s-\om$ from below if $(\mathrm{\mathcal{Q}_2})$ holds, and close to $N+2s$ from below if $(\mathrm{\mathcal{Q}_3})$ holds.
\end{theorem}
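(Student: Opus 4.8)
The plan is to adapt the del Pino--Felmer penalization scheme to the fractional operator and to a potential that may vanish at infinity, and then to extract the decay estimate \eqref{yyy}---together with the sharpness of the thresholds $q_*,q_\om$---from a barrier, a comparison principle, and a finite iteration, as announced in the abstract. Throughout I pass to the rescaled equation: writing $u(x)=v(x/\va)$, problem \eqref{eqs1.1} becomes $\Fs v+V(\va y)\,v=v^{p-1}$ in $\rn$, the well $\Lambda$ turning into $\Lambda_\va:=\Lambda/\va$, and I work on $H_\va:=\{v\in\Ds:[v]_s^2+\int_\rn V(\va y)v^2<\wq\}$.

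\emph{Penalization and a critical point.} Fix a large $K>1$ (depending only on $N,s,p$) and a positive weight $W_\va$ that off $\Lambda_\va$ is comparable to $V(\va\cdot)$ under $(\mathcal V_{lsd})$ or $(\mathcal V_{log})$ and comparable to $(1+|y|)^{-2s}$ in general; by the fractional Hardy inequality $\int_\rn W_\va v^2\lesssim[v]_s^2+\int_\rn V(\va y)v^2$, so the penalized functional $\j(v):=\tfrac12[v]_s^2+\tfrac12\int_\rn V(\va y)v^2-\int_\rn G_\va(y,v)$ is well defined and $C^1$ on $H_\va$, where $g_\va=\pa_tG_\va$ equals $t_+^{p-1}$ on $\Lambda_\va$ and is a smoothing of $\min\{t_+^{p-1},K^{-1}W_\va(y)t_+\}$ off $\Lambda_\va$. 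The usual Ambrosetti--Rabinowitz/monotonicity properties of $g_\va$ give a mountain--pass geometry with level $c_\va$ trapped between two positive constants uniformly in small $\va$; since off $\Lambda_\va$ the nonlinearity is dominated by the linear term $W_\va v$ while $(\mathcal V)$ supplies a genuine well, any bounded $(PS)$ sequence is precompact after a translation kept inside $\Lambda_\va$ (the dichotomy and vanishing alternatives being excluded by $(\mathcal V)$). This produces a positive critical point $v_\va\in H_\va$ of $\j$ with $\j(v_\va)=c_\va$; comparing with the limit functional of $\Fs w+V_0w=w^{p-1}$---whose ground state is unique and nondegenerate by \cite{Frank.L-CPAM-2016}---gives $c_\va\to c_{V_0}$ as $\va\to0$.

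\emph{Regularity, uniform bounds, and the decay estimate---the heart of the matter.} From the penalized equation and Moser iteration for $\Fs$, $v_\va\in L^\wq(\rn)\cap C^\sigma_{\mathrm{loc}}(\rn)$ with norms uniform in $\va$ and $v_\va(y)\to0$ as $|y|\to\wq$; pulling back gives the stated regularity of $u_\va$. Off $\Lambda_\va$ one has $\Fs v_\va\le g_\va(y,v_\va)\le K^{-1}W_\va(y)v_\va$, and I would compare $v_\va$ there with a supersolution $W_\gamma(y):=A_\gamma(1+|y|^\gamma)^{-1}$. The crucial input is the behaviour of $\Fs W_\gamma$ at infinity: it is positive and $\sim|y|^{-\gamma-2s}$, with constant bounded away from $0$, when $\gamma$ stays below $N-2s$, and changes sign once $\gamma$ passes $N-2s$, with $|\Fs W_\gamma|\lesssim|y|^{-2s-(N\wedge\gamma)}$ thereafter. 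One then verifies $\Fs W_\gamma\ge K^{-1}W_\va W_\gamma$ on $\{|y|>R_\va\}$, with $R_\va\to0$ (or merely bounded), provided: $\gamma$ lies just below $N-2s$ under $(\mathcal Q_1)$, where the $\Fs$ term alone already dominates once $K$ exceeds the reciprocal Hardy constant; $\gamma$ just below $N+2s-\om$ under $(\mathcal Q_2)$, where the slow \emph{lower} bound on $V$ furnishes a term $\gtrsim|y|^{-\gamma-\om}$ which, since $\om\le2s$, absorbs the now-negative $\Fs$ term; and $\gamma$ just below $N+2s$ under $(\mathcal Q_3)$, where the logarithmic lower bound beats $|y|^{-\gamma-2s}$ for every $\gamma<N+2s$. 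A comparison principle for $\Fs+V(\va\cdot)-K^{-1}W_\va$ (valid since $V\ge0$ and $K$ is large) then yields $v_\va\le W_\gamma$ on $\rn\setminus\Lambda_\va$ once $A_\gamma$ is fixed by the uniform $L^\wq$ bound on $\Lambda_\va$; because the supersolution test needs $v_\va$ to already carry some polynomial decay, this comparison is iterated, raising the exponent in finitely many steps up to the claimed $\gamma$. Undoing $y=x/\va$ turns $v_\va\le A_\gamma(1+|y|^\gamma)^{-1}$ into \eqref{yyy} with $C_\gamma$ independent of $\va$.

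\emph{Removing the penalization; concentration; main obstacle.} With \eqref{yyy} available, on $\rn\setminus\Lambda$ (where $|x|\ge r_0>0$ since $0\in\Lambda$) the quantity $u_\va^{p-2}$ is controlled by a power of $\tfrac{\va^\gamma}{\va^\gamma+|x-x_\va|^\gamma}$, while the pulled-back weight $K^{-1}W_\va$ is $\gtrsim(1+|x|)^{-2s}$ under $(\mathcal Q_1)$, $\gtrsim|x|^{-\om}$ under $(\mathcal Q_2)$, and $\gtrsim(\log(e+|x|^2))^{-1}$ under $(\mathcal Q_3)$; comparing exponents, the inequality $u_\va^{p-1}\le K^{-1}W_\va u_\va$ holds throughout $\rn\setminus\Lambda$ for all small $\va$ exactly when $\gamma(p-2)\ge2s$ (resp.\ $\ge\om$, resp.\ no constraint), i.e.\ precisely under $(\mathcal Q_1)$ with $\gamma\uparrow N-2s$, $(\mathcal Q_2)$ with $\gamma\uparrow N+2s-\om$, and $(\mathcal Q_3)$ with any $p>2$. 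Hence the truncation in $g_\va$ is never active, $u_\va$ solves \eqref{eqs1.1}, and $u_\va>0$ by the strong maximum principle for $\Fs$. Finally $c_\va\to c_{V_0}$, together with the precompact profile of the translated $v_\va$, forces the concentration point to pull back to some $x_\va\in\Lambda$ with $\lim_{\va\to0}V(x_\va)=V_0$, and \eqref{yyy} makes $x_\va$ a global maximum point of $u_\va$. I expect the barrier step to be the main obstacle: verifying the supersolution inequality uniformly in $\va$---handling the nonlocal remainder and the transition zone $|y|\sim1$ (i.e.\ $|x-x_\va|\sim\va$), choosing $K,\gamma$ consistently with the comparison principle, and arranging that the iteration terminates at the sharp $\gamma$ with $C_\gamma$ nondegenerate as $\va\to0$.
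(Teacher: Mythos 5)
Your proposal follows essentially the same route as the paper: penalize the nonlinearity off $\Lambda$ by a weight that makes the functional coercive via the fractional Hardy inequality, obtain a mountain--pass critical point concentrating at the well, linearize the penalized equation off $\Lambda_\va$, and compare the rescaled solution against barriers of the form $w_\mu=(1+|x|^2)^{-\mu/2}$ using the sign and decay of $(-\Delta)^s w_\mu$ (positive and $\sim|x|^{-\mu-2s}$ for $\mu<N-2s$, negative and $O(|x|^{-N-2s})$ or $O(|x|^{-N-2s}\log|x|)$ for $\mu\ge N-2s$), with $\mu$ chosen just below $N-2s$, $N+2s-\omega$, or $N+2s$ according to $(\mathcal Q_1)$, $(\mathcal Q_2)$, $(\mathcal Q_3)$. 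Your penalization weight $K^{-1}W_\va$ (with $W_\va\sim V(\va\cdot)$ or $(1+|y|)^{-2s}$) plays exactly the role of the paper's $\mathcal P_\va=\va^\theta|x|^{-\tau}\chi_{\Lambda^c}$ pulled back, and your exponent constraints for removing the truncation match the paper's chain of inequalities for $\tau,\theta,\mu(p-2)$; the difference is stylistic.

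One point worth correcting: the claimed need to \emph{iterate} the comparison because the supersolution test requires $v_\va$ to ``already carry some polynomial decay'' rests on a misconception. The comparison principle (Proposition~\ref{5z}) only requires the candidate subsolution $v$ to lie in $\dot H^s(\rn)$ with $\int_{\rn} V_\va v_+^2<\infty$ and to be nonpositive in a ball $B_R(0)$; for $v=v_\va-\bar w_\mu$ all three conditions are immediate from $u_\va\in H^s_{V,\va}(\rn)$ together with the uniform $L^\infty$ bound of Lemma~\ref{w}, which pins the constant in $\bar w_\mu$ so that $v\le0$ on $B_R(0)$. A single application against the supersolution $w_\mu$ then yields $v_\va\le C w_\mu$ directly (this is Proposition~\ref{wq8}); no bootstrap is needed. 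Iteration does appear in the paper, but only for the lower decay estimates and the nonexistence results (Theorems~\ref{thm1.3} and~\ref{thm1.5}), so you may have conflated those arguments with the upper estimate here.
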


\begin{remark}Theorem \r{thm1.2}  improve and develop the main result in \cite{xsc2019} by extending
 the permitted range of $p$ for the existence of $u_\varepsilon$ from $(2+\frac{2s}{N-2s},2_s^*)$ to $(2+\frac{2s}{N},2_s^*)$ and refining the decay estimate of $u_\va$.   More interestingly, when $V$ decays logarithmically or $(\mathrm{\mathcal{Q}_{2}})$ with $\om=0$ (i.e., $\inf_{\rn} V(x)>0$) holds, the admissible range of $p$ for the existence of $u_\va$  is exactly $(2,2_s^*)$, which coincides with that of \cite{co2016}  where $\inf_{\rn} V(x)>0$, but our argument here is quite different from that of \cite{co2016}. Indeed, we establish a unified method to deal with
 both the non-vanishing potential and  vanishing potential.

\end{remark}

As a contrast to Theorem \r{thm1.2}, we have the following sharp nonexistence results.
\begin{theorem}\label{thm1.3}
Let $V\in C(\rn,[0,+\wq))$. For any $\va>0$, \eq{eqs1.1} has no positive weak solutions if one of the following two assumptions holds:

$(\mathrm{\mathcal{Q}'_1})$ $2<p<q_*= 2+\frac{2s}{N-2s}$ and $(\mathrm{\mathcal{V}_{fd}})$ {\bf (fast decay)};

$(\mathrm{\mathcal{Q}'_2})$ $2<p<q_\om= 2+\frac{w}{N+2s-\om}$ and $(\mathrm{\mathcal{V}_{usd}})$ with $\omega >0$ {\bf (upper slow decay)}.
\end{theorem}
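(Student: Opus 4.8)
The plan is to argue by contradiction via a comparison/iteration scheme that would force a positive solution to grow at infinity. Suppose that for some $\va>0$ problem \eqref{eqs1.1} has a positive weak solution $u\in\hv$, under $(\mathcal Q'_1)$ or $(\mathcal Q'_2)$. Since $u\in\hv\hookrightarrow L^{2^*_s}(\rn)$ and $V$ is bounded (by $(\mathcal V_{fd})$ or $(\mathcal V_{usd})$), a Moser/De Giorgi iteration for the fractional Laplacian first gives $u\in L^\wq(\rn)\cap C^\sigma_{\rm loc}(\rn)$ with $u(x)\to0$ as $|x|\to\wq$; in particular $u$ is a distributional solution, $f:=\va^{-2s}(u^{p-1}-Vu)$ is bounded, and $u$ is represented by its Riesz potential,
$$u(x)=\frac{c_{N,s}}{\va^{2s}}\int_{\rn}\frac{u^{p-1}(y)-V(y)u(y)}{|x-y|^{N-2s}}\,\d y .$$

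The core is a lower–bound iteration. Put $\theta=2s$ in case $(\mathcal Q'_1)$ and $\theta=\om$ in case $(\mathcal Q'_2)$, and $\ga_0=N-2s$ (resp. $\ga_0=N+2s-\om$). First I would fix a base bound $u(x)\ge c_0(1+|x|)^{-\ga_0}$: since $u>0$ is continuous, $u^{p-1}\ge m>0$ on $\overline{B_1}$, so $u$ is a supersolution of $\va^{2s}\Fs v+Vv=m\,\mathbf 1_{B_1}$; because $V(x)\le C(1+|x|)^{-\om}$ (resp., given $\eta>0$, $V\le M\mathbf 1_{B_{R_\eta}}+\eta(1+|x|)^{-2s}$ under fast decay), $u$ is also a supersolution of the corresponding Hardy–type equation $\va^{2s}\Fs v+C(1+|x|)^{-\om}v=m\,\mathbf 1_{B_1}$. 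Comparing $u$ — by the maximum principle for $\va^{2s}\Fs+V$, using $u\to0$ at infinity — with the Green potential of $m\,\mathbf 1_{B_1}$ for that operator, whose two–sided estimates are known, yields the claimed base bound (in the fast–decay case one gets $u\gtrsim(1+|x|)^{-(N-2s+\delta_\eta)}$ with $\delta_\eta\to0$, which is all we need since $p<q_*$ is an open condition). For the inductive step, assume $u(x)\ge c_n(1+|x|)^{-\ga_n}$. Then $u^{p-1}(x)\ge c_n^{p-1}(1+|x|)^{-(p-1)\ga_n}$, so $u$ is a supersolution of $\va^{2s}\Fs v+Vv=c_n^{p-1}(1+|x|)^{-(p-1)\ga_n}$; a direct computation shows that, with
$$\ga_{n+1}=(p-1)\ga_n-\theta ,$$
and $c_{n+1}>0$ small, the function $c_{n+1}(1+|x|)^{-\ga_{n+1}}$ is a subsolution of the same equation outside a large ball. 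The exponent drop $\theta$ is dictated by the dominant term of $\va^{2s}\Fs+V$ applied to $(1+|x|)^{-\ga_{n+1}}$: under $(\mathcal V_{fd})$ it is $\va^{2s}\Fs(1+|x|)^{-\ga_{n+1}}\sim(1+|x|)^{-(\ga_{n+1}+2s)}$, giving $\theta=2s$; under $(\mathcal V_{usd})$, since $\om\le2s$, the potential term $V(1+|x|)^{-\ga_{n+1}}\sim(1+|x|)^{-(\om+\ga_{n+1})}$ wins, giving $\theta=\om$. Comparison then yields $u(x)\ge c_{n+1}(1+|x|)^{-\ga_{n+1}}$, valid as long as $\ga_n>0$.

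Now close the loop arithmetically. Condition $(\mathcal Q'_1)$, i.e. $p<q_*=2+\frac{2s}{N-2s}$ (resp. $(\mathcal Q'_2)$, i.e. $p<q_\om=2+\frac{\om}{N+2s-\om}$), is exactly the inequality $(p-2)\ga_0<\theta$, equivalently $\ga_0<\frac{\theta}{p-2}$, and $\frac{\theta}{p-2}$ is precisely the fixed point of $t\mapsto(p-1)t-\theta$. Since $p>2$ forces $p-1>1$, the recursion is expanding about its fixed point,
$$\ga_n-\tfrac{\theta}{p-2}=(p-1)^n\Big(\ga_0-\tfrac{\theta}{p-2}\Big)\longrightarrow-\wq ,$$
so there is a first $n_0$ with $\ga_{n_0}\le\frac{2s}{p-1}$, all earlier steps being valid. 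A short computation using $p<q_*$ (resp. $p<q_\om$) shows that $(p-2)\ga_{n_0}\le\frac{2s(p-2)}{p-1}$ is strictly below the decay exponent of $V$, so $u^{p-2}(y)\gtrsim(1+|y|)^{-(p-2)\ga_{n_0}}\ge V(y)$ for $|y|$ large, i.e. $u^{p-1}-Vu\ge0$ outside some ball $B_R$; but then, using $u(y)\ge c_{n_0}(1+|y|)^{-\ga_{n_0}}$ together with $(p-1)\ga_{n_0}\le2s$,
$$\int_{\rn}\frac{u^{p-1}(y)-V(y)u(y)}{|x-y|^{N-2s}}\,\d y\ \ge\ \frac{c_{n_0}^{p-1}}{2}\int_{|y|>R}\frac{(1+|y|)^{-(p-1)\ga_{n_0}}}{|x-y|^{N-2s}}\,\d y\ =\ +\wq ,$$
contradicting the finiteness of $u(x)$ via the representation formula. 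Hence for every $\va>0$ there is no positive weak solution, which is Theorem \ref{thm1.3}.

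The hard part is everything "at infinity": establishing the base bound $u\gtrsim(1+|x|)^{-\ga_0}$ with the sharp exponent and pushing the inductive comparison through cleanly. Under $(\mathcal V_{fd})$/$(\mathcal V_{usd})$ one controls $V$ only up to factors that merely vanish at infinity (not up to a fixed polynomial rate), so the relevant convolution and Green–function estimates are borderline and generate logarithmic losses; these must be re-absorbed into the exponents $\ga_n$ in a way that does not accumulate over the finitely many steps, and one has to check beforehand that $u$ decays quantitatively fast enough for the Riesz representation — and for $\int|x-y|^{-(N-2s)}V(y)u(y)\,\d y$ — to be meaningful. Pinning down the two–sided bounds for the Green function of the Hardy–type operator $\va^{2s}\Fs+C(1+|x|)^{-\om}$ (and the analogue with arbitrarily small coefficient for the fast–decay case) is where the real work lies; granting these, the iteration and the identification of $q_*,q_\om$ as the thresholds are short.
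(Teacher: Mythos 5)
Your iterative scheme coincides structurally with the paper's: the recursion $\gamma_{n+1}=(p-1)\gamma_n-\theta$ with $\theta=2s$ (resp.\ $\theta=\omega$), the identification of the threshold $q_*$ (resp.\ $q_\omega$) as the condition that the starting exponent lies below the fixed point $\theta/(p-2)$, and the expansion of the iterates to $-\infty$ are exactly the paper's Case $(\mathcal Q'_1)$/$(\mathcal Q'_2)$ argument (with $\mu_{i+1}=\mu_i(p-1)-2s$, resp.\ $-\omega$). The paper runs the comparison not against a Green potential of a Hardy-type operator but against the explicit barriers $w_\mu=(1+|x|^2)^{-\mu/2}$, for which it has two-sided pointwise bounds on $(-\Delta)^s w_\mu$ (Proposition~\ref{tb}); this sidesteps entirely the Green-function machinery that you flag as ``the hard part.'' The base bound in the paper is not $\gamma_0=N-2s$ but $\mu_1\in(N-2s,N)$ (so that $(-\Delta)^s w_{\mu_1}<0$ and $w_{\mu_1}$ really is a subsolution of $(-\Delta)^s v+Vv=0$); this is morally what your ``$N-2s+\delta_\eta$'' remark gestures at, but it is the actual mechanism, not a correction term.

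The genuine gap is the contradiction step. The paper closes by picking a final $\mu\in\bigl((N-2s)/2,\,N/p\bigr)$, a non-empty interval for all $p<2^*_s$, so that $u\ge D_\mu w_\mu$ makes $\int u^p=+\infty$, which directly contradicts the energy identity $\int u^p=\|u\|^2_{\dot H^s}+\int Vu^2<\infty$ obtained by testing the weak formulation against $u$ itself; no representation formula is needed. Your proposed contradiction instead needs a valid iterate $\gamma_{n_0}\le 2s/(p-1)$, but the comparison principle (Lemma~\ref{ws8}) requires the barrier $w_{\gamma_{n_0}}$ to lie in $\dot H^s(\R^N)$, i.e.\ $\gamma_{n_0}>(N-2s)/2$. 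The overlap $(N-2s)/2<\gamma\le 2s/(p-1)$ is non-empty only when $p<(N+2s)/(N-2s)$; when $N>4s$, this is a strictly smaller range than $p<q_*=(2N-2s)/(N-2s)$, so for $p\in\bigl((N+2s)/(N-2s),\,q_*\bigr)$ your Riesz-blowup target is never reached by an admissible subsolution, and the argument stalls. Independently, the claim that an $\dot H^s$ weak solution is given pointwise by the Riesz potential of $u^{p-1}-Vu$ is not automatic here: you are precisely trying to show that potential is $+\infty$, so invoking the representation formula before establishing the requisite integrability is circular. Both issues disappear if you replace the Riesz contradiction with the paper's $\int u^p=+\infty$ contradiction, which your lower bounds already deliver.
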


Combining Theorem  \r{thm1.2} with Theorem \r{thm1.3}, we obtain the following corollary:
\begin{corollary}\label{c3011} Suppose that $V$ satisfies ($\mathcal{V}$) and one of the decay assumptions $(\mathrm{\mathcal{V}_{fd}})$ {\bf (fast decay )}and
$(\mathrm{\mathcal{V}_{sd}})$ {\bf (slow decay or no decay)} holds. Then there exists a threshold  $p_*\in [2, 2^*_s)$ such that problem (\ref {eqs1.1}) admits a positive concentration solution $u_\varepsilon$  as $\varepsilon\to 0$ if $p\in (p_*, 2^*_s)$ and  has no positive weak solutions for all $\varepsilon>0$ and $p\in (2, p_*)$ if $p_*>2$, where  the threshold value $p_*\in [2, 2^*_s)$ can be characterized by
\begin{equation*}\label{qdj111}
p_*=\left\{\begin{array}{l}
q_*  \ \text { if } \ (\mathrm{\mathcal{V}_{fd}}) \ \text {{\bf (fast decay)} holds,} \\
q_\om \ \text { if } \ (\mathrm{\mathcal{V}_{sd}})  \ \text {{\bf (slow decay)} holds}.
\end{array}\right.
\end{equation*}
\end {corollary}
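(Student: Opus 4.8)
The plan is to deduce Corollary \ref{c3011} directly from Theorems \ref{thm1.2} and \ref{thm1.3} by splitting into the two cases according to which of the decay hypotheses $(\mathrm{\mathcal{V}_{fd}})$ or $(\mathrm{\mathcal{V}_{sd}})$ is assumed; no new analysis is needed, so the task is essentially one of bookkeeping. First I would record the elementary inequalities governing the candidate thresholds. For $q_* = 2 + \frac{2s}{N-2s}$ one has $2 < q_* < 2_s^*$, since $q_* - 2 = \frac{2s}{N-2s}$ while $2_s^* - 2 = \frac{4s}{N-2s}$. For $q_\omega = 2 + \frac{\omega}{N+2s-\omega}$ with $\omega \in [0,2s]$, the map $\omega \mapsto \frac{\omega}{N+2s-\omega}$ is increasing, so $0 \le q_\omega - 2 \le \frac{2s}{N} < \frac{4s}{N-2s}$, giving $2 \le q_\omega < 2_s^*$, with $q_\omega = 2$ precisely when $\omega = 0$. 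Hence in both cases the value $p_*$ proposed in the statement lies in $[2, 2_s^*)$. I would also note at the outset the trivial implications among the decay conditions: $(\mathrm{\mathcal{V}_{sd}})$ with exponent $\omega$ implies both $(\mathrm{\mathcal{V}_{lsd}})$ and $(\mathrm{\mathcal{V}_{usd}})$ with the same $\omega$.

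For the existence half, suppose $p \in (p_*, 2_s^*)$. If $(\mathrm{\mathcal{V}_{fd}})$ holds, then $p_* = q_*$ and $p > q_*$ is exactly hypothesis $(\mathrm{\mathcal{Q}_1})$. If $(\mathrm{\mathcal{V}_{sd}})$ holds with exponent $\omega$, then $p_* = q_\omega$, $p > q_\omega$, and $(\mathrm{\mathcal{V}_{sd}})$ supplies $(\mathrm{\mathcal{V}_{lsd}})$, so hypothesis $(\mathrm{\mathcal{Q}_2})$ holds. In either case Theorem \ref{thm1.2} applies and yields, for all sufficiently small $\varepsilon$, a positive weak solution $u_\varepsilon$ with a global maximum point $x_\varepsilon \in \Lambda$ satisfying $\lim_{\varepsilon\to 0}V(x_\varepsilon) = V_0$ together with the pointwise estimate \eqref{yyy}; this estimate forces $u_\varepsilon \to 0$ uniformly on sets bounded away from $x_\varepsilon$, at rate $\varepsilon^{\gamma}$, so $u_\varepsilon$ is a concentration solution as $\varepsilon \to 0$, as required.

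For the nonexistence half, assume $p_* > 2$ and take $p \in (2, p_*)$. If $(\mathrm{\mathcal{V}_{fd}})$ holds, then $2 < p < q_*$ together with $(\mathrm{\mathcal{V}_{fd}})$ is precisely $(\mathrm{\mathcal{Q}'_1})$. If $(\mathrm{\mathcal{V}_{sd}})$ holds, then $p_* > 2$ forces $\omega > 0$, and $2 < p < q_\omega$ together with the condition $(\mathrm{\mathcal{V}_{usd}})$ with $\omega > 0$, inherited from $(\mathrm{\mathcal{V}_{sd}})$, is precisely $(\mathrm{\mathcal{Q}'_2})$. Either way Theorem \ref{thm1.3} applies (its hypothesis $V \in C(\rn,[0,+\infty))$ being contained in $(\mathcal{V})$) and shows that \eqref{eqs1.1} has no positive weak solution for any $\varepsilon > 0$. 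The one point requiring a small amount of care is the boundary case $\omega = 0$ in $(\mathrm{\mathcal{V}_{sd}})$: there $p_* = q_0 = 2$, so the nonexistence clause is vacuous by design and only the existence statement, already established for all $p \in (2, 2_s^*)$, is asserted. Assembling the two cases yields the stated dichotomy with $p_*$ as displayed. Since the whole argument reduces to invoking Theorems \ref{thm1.2} and \ref{thm1.3}, I do not anticipate any real obstacle beyond correctly tracking these implications among the decay conditions and the $\omega = 0$ endpoint.
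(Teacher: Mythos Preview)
Your proposal is correct and follows exactly the approach the paper intends: the corollary is stated immediately after Theorems \ref{thm1.2} and \ref{thm1.3} with the remark ``Combining Theorem \ref{thm1.2} with Theorem \ref{thm1.3}, we obtain the following corollary,'' and no separate proof is given. Your careful bookkeeping of the implications $(\mathrm{\mathcal{V}_{sd}}) \Rightarrow (\mathrm{\mathcal{V}_{lsd}}), (\mathrm{\mathcal{V}_{usd}})$, the verification that $p_* \in [2,2_s^*)$, and the handling of the $\omega=0$ endpoint are all correct and indeed more explicit than what the paper itself provides.
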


Note that the local assumption ($\mathcal{V}$) is not necessary in Theorem \r{thm1.3}. Meanwhile,
Corollary  \r{c3011} and Theorem \r{thm1.2} indicate that in the case of slow decay $(\mathrm{\mathcal{V}_{sd}})$, the threshold $p_*=q_\omega$ and the upper decay estimate of $u_\va$  depend on the decay rate $|x|^{-\omega}$ of $V(x)$, while in the case of fast decay $(\mathrm{\mathcal{V}_{fd}})$, they are independent of the decay rate of $V(x)$.  We point out that it remains an open problem  to deal with the threshold  $p_*=2+\frac {2s}{N-2s}$ if $\lim_{|x|\to\wq}(1+|x|^{2s})V(x)=0$ or  $p_*=q_\om  =2+\frac {\om}{N+2s-\om} $ if $\sup_{x\in\rn}(1+|x|^{\om})V(x)<\wq$ with $\om\in(0,2s]$.

%The main idea in the proof of Theorem \ref{thm1.4} is Moser iteration and Schauder estimates (see Lemmas \ref{w}, \ref{I}, \ref{s} and \ref{p} for more details).

Recall that we have given a upper decay estimate of $u_\va$ in  Theorem \r{thm1.2}. The second part of this paper is to provide some priori lower decay estimates for general positive weak solutions of \eq{eqs1.1} under various decay assumptions on potential $V$.
\begin{theorem}\label{thm1.5}
Assume that $V\in C(\rn,[0,+\wq))$  satisfies $(\mathrm{\mathcal{V}_{usd}})$ and  $p\in(p_*,2_s^*)$, where
\begin{equation*}\label{qdj111}
p_*=\left\{\begin{array}{l}
2+\frac {2s}{N-2s} \ \ \ \text { if } \omega \in (2s, +\infty), \\
2+\frac {\omega}{N+2s-\omega} \text { if }  \omega \in [0, 2s].
\end{array}\right.
\end{equation*}
 Then positive weak solutions $u$ of \eq{eqs1.1} satisfy the following lower decay estimates:

(i) $u(x)\ge \frac{C_\va}{1+|x|^{N-2s}}$ if   $\om>(N-2s)(p-2)>2s$;

(ii) $u(x)\ge \frac{C_{\va,\mu}}{1+|x|^{\mu}}$ for any $\mu>N-2s$ if  $\om \in (2s, (N-2s)(p-2)]$;

(iii)  $u(x)\ge \frac{C_\va}{1+|x|^{N+2s-\om}}$ if  $\om\in[0,2s]$.

\noindent Here $C_\va, C_{\va,\mu}>0$ are constants.
\end{theorem}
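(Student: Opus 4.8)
The plan is to obtain all three estimates from the maximum principle for the linear operator $\mathcal{L}:=\varepsilon^{2s}(-\Delta)^s+V$ on exterior domains, applied to barriers of the form $w_\gamma(x):=(1+|x|^2)^{-\gamma/2}$. First I would collect the preliminary facts about a positive weak solution $u$: from the regularity and decay theory already used in the paper (De~Giorgi--Nash--Moser iteration together with the scaling-invariant interior H\"older estimate and $u\in L^{2_s^*}(\rn)$), one has $u\in C^\sigma_{\mathrm{loc}}(\rn)\cap L^\wq(\rn)$, $u(x)\to 0$ as $|x|\to\wq$, $u$ solves \eqref{eqs1.1} pointwise where it is smooth, and $m_R:=\inf_{B_R}u>0$ for every $R>0$. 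Second, I would record the asymptotics of $(-\Delta)^s w_\gamma$ at infinity: there is $A_\gamma\in\R$ with $(-\Delta)^s w_\gamma(x)=A_\gamma|x|^{-\gamma-2s}(1+o(1))$, where $A_\gamma>0$ if $0<\gamma<N-2s$ and $A_\gamma<0$ if $N-2s<\gamma<N$; for the threshold one has the exact identity $(-\Delta)^s w_{N-2s}=\kappa_{N,s}\,w_{N+2s}$ with $\kappa_{N,s}>0$ (because $w_{N-2s}$ is, up to a constant, the Aubin--Talenti bubble); and $(-\Delta)^s w_\gamma(x)=-c_{N,s}\|w_\gamma\|_{L^1}\,|x|^{-N-2s}(1+o(1))<0$ if $\gamma>N$. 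Third, I would prove a \emph{nonlinear comparison lemma}: if $\psi\in\hv$, $\psi\to 0$ at infinity, $\psi\le u$ on $B_R$, and $\mathcal{L}\psi\le 0$ on $B_R^c$, then $\psi\le u$ on $\rn$; this follows by testing $\mathcal{L}(\psi-u)=\mathcal{L}\psi-u^{p-1}$ against $(\psi-u)_+$, using the pointwise inequality $\big(a(x)-a(y)\big)\big(a_+(x)-a_+(y)\big)\ge\big(a_+(x)-a_+(y)\big)^2$ to bound the Gagliardo form from below, and the sign of $\mathcal{L}\psi$ and of $u^{p-1}$ on $B_R^c$ to bound the right-hand side from above; the two bounds force $[(\psi-u)_+]_s=0$, hence $(\psi-u)_+\equiv 0$.

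\textbf{The crude bounds and part (ii).} If $\omega>2s$ I take $\gamma\in(N-2s,N)$; then $\mathcal{L}(\delta w_\gamma)(x)=\delta\big(\varepsilon^{2s}A_\gamma|x|^{-\gamma-2s}+O(|x|^{-\gamma-\omega})\big)$, which is negative for $|x|$ large since $A_\gamma<0$ and $\gamma+\omega>\gamma+2s$. Choosing $R$ so that $\mathcal{L}(\delta w_\gamma)\le 0$ on $B_R^c$ and then $\delta$ small so that $\delta w_\gamma\le u$ on $B_R$, the comparison lemma gives $u\ge\delta w_\gamma$, i.e. $u(x)\ge C_{\va,\mu}(1+|x|)^{-\mu}$ for every $\mu>N-2s$, which is exactly part~(ii). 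If $\omega\le 2s$, the same reasoning with any $\gamma>N+2s-\omega$ (so that $\gamma>N$ and $\gamma+\omega>N+2s$, whence $\mathcal{L}(\delta w_\gamma)\sim-\varepsilon^{2s}c_{N,s}\|w_\gamma\|_{L^1}|x|^{-N-2s}<0$) yields $u(x)\ge C_{\va,\mu}(1+|x|)^{-\mu}$ for every $\mu>N+2s-\omega$.

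\textbf{Part (i).} To upgrade the crude bound to the endpoint $\gamma=N-2s$, I would bootstrap through the nonlinearity. Since $\omega>(N-2s)(p-2)$ I may fix $\mu\in(N-2s,\omega/(p-2))$; the crude bound gives $u(x)^{p-2}\ge c(1+|x|)^{-\mu(p-2)}$, and because $\mu(p-2)<\omega$ while $V(x)\lesssim(1+|x|)^{-\omega}$, we get $u^{p-2}\ge V$ on some $B_R^c$. Hence $\varepsilon^{2s}(-\Delta)^s u=u\,(u^{p-2}-V)\ge 0$ on $B_R^c$, so $u$ is a positive $s$-superharmonic function there tending to $0$ at infinity. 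Comparing $u$ with $m_R v_R$, where $v_R$ is the $s$-equilibrium potential of $B_R$ (i.e. $(-\Delta)^s v_R=0$ in $B_R^c$, $v_R\equiv 1$ on $B_R$, $v_R(x)=c_{N,s}\,\mathrm{cap}_s(B_R)\,|x|^{-(N-2s)}(1+o(1))$), and using the exterior maximum principle for $(-\Delta)^s$, I obtain $u\ge m_R v_R$ on $B_R^c$, hence $u(x)\ge C_\va(1+|x|)^{-(N-2s)}$, which is part~(i).

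\textbf{Part (iii) --- the main obstacle.} Here the barrier argument stalls at the endpoint: for $\gamma=N+2s-\omega$ the terms $\varepsilon^{2s}(-\Delta)^s w_\gamma\sim-\varepsilon^{2s}c_{N,s}\|w_\gamma\|_{L^1}|x|^{-N-2s}$ and $Vw_\gamma\lesssim|x|^{-N-2s}$ have the same order, so $\mathcal{L}(\delta w_\gamma)\le 0$ on a \emph{fixed} exterior domain only when $V$ is small relative to $\varepsilon^{2s}$; and the nonlinear term cannot help because, $p>q_\omega$ forces $u^{p-1}$ to decay like $|x|^{-\mu(p-1)}$ with $\mu(p-1)>(N+2s-\omega)(p-1)>N+2s$, strictly faster than $|x|^{-N-2s}$. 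I would therefore reduce to a purely linear estimate: writing $u=G_{\mathcal{L}}[u^{p-1}]\ge m_1^{p-1}\,\Phi$ with $\Phi:=G_{\mathcal{L}}[\chi_{B_1}]$ the (positive) solution of $\mathcal{L}\Phi=\chi_{B_1}$ --- well defined since $\mathcal{L}$ is coercive and order-preserving --- it suffices to prove the Green-function lower bound $\Phi(x)\gtrsim(1+|x|)^{-(N+2s-\omega)}$. For this I would use the resolvent identity $\Phi=\tfrac{c_{N,s}}{\varepsilon^{2s}}(-\Delta)^{-s}\big(\chi_{B_1}-V\Phi\big)$ (Riesz potential) together with the mass identity $\int_{\rn}V\Phi=|B_1|$, which makes the density $\chi_{B_1}-V\Phi$ charge-neutral so that the two leading $|x|^{-(N-2s)}$ contributions cancel, the crude upper bound $\Phi\le G_0[\chi_{B_1}]\lesssim|x|^{-(N-2s)}$, and the monotonicity $\Phi\ge\widetilde\Phi$ obtained by replacing $V$ with the maximal admissible potential $\widetilde V(x)=C_V(1+|x|^\omega)^{-1}$; an iteration improving the decay exponent of $\widetilde\Phi$ until it is pinned at $N+2s-\omega$ then closes the estimate. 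I expect making this cancellation quantitative --- in particular in the regime $\omega\le 2s$, in which the perturbative expansion of $G_{\mathcal{L}}$ in powers of $V$ does not converge, so that a self-consistent/fixed-point estimate for the tail of $\widetilde\Phi$ (or, alternatively, a Liouville-type blow-down argument excluding $\liminf_{|x|\to\wq}|x|^{N+2s-\omega}u(x)=0$) is required --- to be the crux of the proof.
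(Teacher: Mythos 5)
Your treatment of parts (i) and (ii) is essentially right and close in spirit to the paper's. The preliminary regularity/decay facts, the nonlinear comparison lemma (testing against $(\psi-u)_+$ and using the positive-part inequality for the Gagliardo form), and the barrier $w_\gamma$ with $\gamma\in(N-2s,N)$ in the range $\omega>2s$ all match the paper's Lemma~\ref{ws8}--Proposition~\ref{tb} machinery. For (i) you bootstrap via $u^{p-2}\ge V$ on an exterior domain and compare with the $s$-equilibrium potential; the paper instead keeps the forcing $|x|^{-\mu(p-1)}$ and compares with $w_{N-2s}$ (which satisfies $(-\Delta)^s w_{N-2s}=C_{N-2s}w_{N-2s}^{2_s^*-1}$), but both routes close the endpoint $\gamma=N-2s$ under the same hypothesis $\omega>(N-2s)(p-2)$.

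Part (iii) is where there is a genuine gap. You correctly observe that at $\gamma=N+2s-\omega$ the two terms $\varepsilon^{2s}(-\Delta)^s w_\gamma$ and $Vw_\gamma$ are of the same order $|x|^{-N-2s}$, but you conclude the barrier method stalls and instead propose a Green-function/resolvent/mass-cancellation argument which you yourself flag as incomplete (``I expect \dots to be the crux''), and which is not needed. The paper handles this endpoint with an entirely elementary device you missed. For $\omega<2s$: rescale the barrier, $v_\lambda(x):=w_{N+2s-\omega}(\lambda x)$. Then $(-\Delta)^s v_\lambda(x)=\lambda^{2s}\big[(-\Delta)^s w_{N+2s-\omega}\big](\lambda x)\le -\lambda^{-N}C_{N+2s-\omega}|x|^{-N-2s}$ while $V(x)v_\lambda(x)\le C_\omega|x|^{-\omega}(\lambda|x|)^{-(N+2s-\omega)}=C_\omega\lambda^{-(N+2s-\omega)}|x|^{-N-2s}$, so the potential term is suppressed relative to the nonlocal one by the factor $\lambda^{\omega-2s}\to0$. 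Choosing $\lambda$ large (depending on $\varepsilon$, which is permitted since the conclusion only claims a constant $C_\varepsilon$), one gets $(-\Delta)^s v_\lambda+Vv_\lambda\le0$ outside a ball and applies the comparison lemma as before; the polynomial rate $|x|^{-(N+2s-\omega)}$ survives the rescaling. For $\omega=2s$ no rescaling is needed at all: by Proposition~\ref{tb}, $(-\Delta)^s w_N$ carries an extra $\ln|x|$ factor, $(-\Delta)^s w_N\le -C_N\ln|x|\,|x|^{-N-2s}$, which strictly dominates $Vw_N\lesssim|x|^{-N-2s}$ on an exterior ball, so $w_N$ itself is a barrier. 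Your asymptotic $(-\Delta)^s w_\gamma\sim -c\|w_\gamma\|_{L^1}|x|^{-N-2s}$ is only valid for $\gamma>N$; at $\gamma=N$ one has $w_N\notin L^1$ and the logarithm, which you did not account for.
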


From Theorem \r{thm1.5} we know that  the local assumption ($\mathcal{V}$) is dispensable and the decays  of positive weak solutions to \eq{eqs1.1} depend on those  of the potential $V$.  In general,  the positive weak solutions of \eq{eqs1.1}  decay slower if the potential $V$ decays faster.

When employing a penalized idea, one looks forward to  using the upper decay estimate of penalized solutions to recover the original problem. From this point of view,
Theorems \r{thm1.3} and \r{thm1.5} are important and helpful to the construction of penalization. For instance, in the case that $V$ has compact support, if one expects  \eq{eqs1.1} to have a positive weak solution $u$ satisfying $0< u(x)\le \frac{\va^\theta}{|x|^{\tau}}$, it is necessary  to consider $p> 2+\frac{2s}{N-2s}$ and $\tau<N-2s$, which matches Theorem \r{thm1.2}.

Due to Theorem \r{thm1.5}, we are able to  characterize precisely the decay property of $u_\va$ given by Theorem \r{thm1.2} if the decay rate of $V$ is specific, i.e.,

$(\mathcal{V}_\om)$ $\frac{C_1}{1+|x|^\om}\le V(x)\le \frac{C_2}{1+|x|^\om}$ in $\rn\backslash\Om$ for some $C_1, C_2\in(0,\wq)$, $\om\in [0,+\wq]$ and bounded open set $\Om\subset\rn$, where $\Om=\La$ if $\om\in[0,+\wq)$ and $\La\subset\subset\Om$ if $\om=+\wq$.

We define  $\frac{1}{1+|x|^\om}=0$ for $\om=+\wq$.

\begin{corollary}\label{c30} Assume ($\mathcal{V}$) and ($\mathcal{V}_\om$), $p\in (p_*, 2_s^*)$, where
\begin{equation*}\label{qdj111}
p_*=\left\{\begin{array}{l}
q_*=2+\frac {2s}{N-2s} \ \ \ \text { if } \omega \in (2s, +\infty], \\
q_\om=2+\frac {\omega}{N+2s-\omega} \text { if }  \omega \in [0, 2s].
\end{array}\right.
\end{equation*}
Let $u_\va$ be given by Theorem \r{thm1.2}. Then there hold

(i) $\frac{A_\va}{1+|x|^{N-2s}}\le u_\va(x)\le \frac{C_{\va,\ga}}{1+|x|^{\ga}}$ for any $\ga<N-2s$ if $\om>(N-2s)(p-2)>2s$;

(ii) $\frac{A_{\va,\mu}}{1+|x|^{\mu}}\le u_\va(x)\le \frac{C_\va}{1+|x|^{N-2s}}$ for any $\mu>N-2s$ if  $\om \in (2s, (N-2s)(p-2))$;

(iii)  $\frac{A_\va}{1+|x|^{N}}\le u_\va(x)\le \frac{C_{\va,\ga}}{1+|x|^{\ga}}$ for any $\ga<N$  if  $\om=2s$;

(iv)  $\frac{A_\va}{1+|x|^{N+2s-\om}}\le u_\va(x)\le \frac{C_\va}{1+|x|^{N+2s-\om}}$ if $\om\in[0,2s)$.

\noindent Here $A_\va, C_\va,  A_{\va,\mu}, C_{\va,\mu}>0$ are constants.
\end{corollary}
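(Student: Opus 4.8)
The plan is to obtain Corollary \ref{c30} by combining the upper estimate \eqref{yyy} of Theorem \ref{thm1.2} with the lower estimates of Theorem \ref{thm1.5}, once we observe that the hypothesis $(\mathcal V_\om)$ is a special case of the decay conditions appearing in those results. More precisely, if $\om\in[0,+\wq)$, then $(\mathcal V_\om)$ forces $C_1(1+|x|^\om)^{-1}\le V(x)\le C_2(1+|x|^\om)^{-1}$ on $\rn\setminus\La$, and since $V$ is continuous and positive on the bounded set $\overline\La$ (by $(\mathcal V)$, $V\ge V_0>0$ there), $V$ in fact satisfies $(\mathcal V_{\rm sd})$ with parameter $\om$ when $\om\in[0,2s]$, and $(\mathcal V_{\rm usd})$ with that same $\om$ always; when $\om\in(2s,+\wq)$ one has in addition $(1+|x|^{2s})V(x)\to0$, i.e. $(\mathcal V_{\rm fd})$. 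In every case the restriction $p\in(p_*,2_s^*)$ with the stated $p_*$ is exactly the one under which Theorem \ref{thm1.2} produces $u_\va$ (via $(\mathcal Q_1)$ when $\om>2s$, via $(\mathcal Q_2)$ when $\om\in(0,2s]$ since $(\mathcal V_{\rm lsd})$ holds, and via either $(\mathcal Q_2)$ with $\om=0$ or the non-vanishing case when $\om=0$) and under which Theorem \ref{thm1.5} applies (its hypothesis is $(\mathcal V_{\rm usd})$, which we have just verified, together with the very same $p_*$).

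The upper bounds then come directly from \eqref{yyy}. For a fixed small $\va$ the bound $u_\va(x)\le C_\ga\va^\ga/(\va^\ga+|x-x_\va|^\ga)$ together with boundedness of $u_\va$ and of $x_\va\in\overline\La$ yields $u_\va(x)\le C_{\va,\ga}(1+|x|^\ga)^{-1}$ for every admissible $\ga$; Theorem \ref{thm1.2} tells us $\ga$ may be taken arbitrarily close from below to $N-2s$ in case $(\mathcal Q_1)$ (covering (i), (ii), (iii) with $\ga<N-2s$, resp.\ $<N$ when $\om=2s$ since then $N+2s-\om=N$), and close from below to $N+2s-\om$ in case $(\mathcal Q_2)$ (covering (iv)). Note that in part (ii) we actually claim the stronger bound $u_\va(x)\le C_\va(1+|x|^{N-2s})^{-1}$ rather than merely $\ga<N-2s$; this borderline improvement should be extracted from the iteration scheme underlying Theorem \ref{thm1.2} by checking that, in the regime $\om\in(2s,(N-2s)(p-2))$, the comparison function can be taken with exponent exactly $N-2s$ — I would verify this by revisiting the fixed point of the iteration (the exponent $\ga$ is driven up to $\min\{N-2s,\dots\}$ and here the competing term $(N-2s)(p-1)-\om-2s>N-2s$ does not obstruct reaching the endpoint).

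For the lower bounds I simply quote Theorem \ref{thm1.5}: part (i) of the theorem gives $u_\va(x)\ge C_\va(1+|x|^{N-2s})^{-1}$ when $\om>(N-2s)(p-2)>2s$, which is (i) here with $A_\va=C_\va$; part (ii) gives $u_\va(x)\ge C_{\va,\mu}(1+|x|^\mu)^{-1}$ for any $\mu>N-2s$ when $\om\in(2s,(N-2s)(p-2)]$, which is (ii) here (the open interval $(2s,(N-2s)(p-2))$ being contained in the theorem's half-open one); and part (iii) gives $u_\va(x)\ge C_\va(1+|x|^{N+2s-\om})^{-1}$ for $\om\in[0,2s]$, which is (iii) here when $\om=2s$ (then $N+2s-\om=N$) and (iv) when $\om\in[0,2s)$. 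Matching the lower exponent $N+2s-\om$ with the upper exponent $N+2s-\om$ in (iv) gives the sharp two-sided estimate there.

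The main obstacle is the borderline upper estimate in (ii), $u_\va\le C_\va(1+|x|^{N-2s})^{-1}$: Theorem \ref{thm1.2} as stated only gives $\ga<N-2s$, so one must reopen the construction and argue that in the specific parameter window $\om\in(2s,(N-2s)(p-2))$ the bootstrap actually closes at the endpoint exponent $N-2s$ — this is where the condition $\om<(N-2s)(p-2)$ is used, ensuring the nonlinear feedback term decays strictly faster than $|x|^{-(N-2s)}$ so that the comparison principle of the paper can be applied with the barrier $|x|^{-(N-2s)}$ itself rather than a strictly smaller power. Everything else is bookkeeping: translating $(\mathcal V_\om)$ into the hypotheses of the two theorems and reading off exponents.
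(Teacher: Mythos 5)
Your overall strategy -- reading the upper bound off \eqref{yyy} and the lower bound off Theorem \ref{thm1.5} -- is exactly the paper's, and you correctly notice that the upper bound in part (ii), with the \emph{endpoint} exponent $N-2s$ rather than an arbitrary $\ga<N-2s$, requires one more comparison pass beyond what \eqref{yyy} gives. The paper does precisely this: feeding $u_\va\le C(1+|x|^\ga)^{-1}$ with $\ga<N-2s$ chosen so that $\ga(p-1)\ge\om+N-2s$ (possible since $\om<(N-2s)(p-2)$) into the equation gives $u_\va^{p-1}\lesssim|x|^{-(\om+N-2s)}$, while by Proposition \ref{tb} the barrier $w_{N-2s}$ satisfies $\va^{2s}\Fs w_{N-2s}+Vw_{N-2s}\gtrsim Vw_{N-2s}\gtrsim|x|^{-(\om+N-2s)}$; Lemma \ref{ws8} then closes the loop. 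Your written-out inequality ``$(N-2s)(p-1)-\om-2s>N-2s$'' is not the one that is actually used -- the relevant requirement is $\ga(p-1)\ge\om+N-2s$ with $\ga$ near $N-2s$, which needs only $\om<(N-2s)(p-2)$ -- but this is a phrasing slip rather than a gap, since you do invoke the correct hypothesis.

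The genuine gap is in part (iv). You assert that the upper bound there is ``covered'' directly by \eqref{yyy} with $\ga$ close to $N+2s-\om$ from below, and then speak of ``matching'' exponents. But part (iv) claims $u_\va\le C_\va(1+|x|^{N+2s-\om})^{-1}$ with the \emph{endpoint} exponent $N+2s-\om$, which \eqref{yyy} never reaches -- exactly the same issue you flagged in part (ii). A second comparison step is needed here too, and it is slightly more delicate than in (ii) because $w_{N+2s-\om}$ lives in the range $\mu\in(N,N+2s)$ where $\Fs w_\mu$ is \emph{negative} at infinity (Proposition \ref{tb}); so one cannot discard the $\Fs$ term. The paper handles this by first choosing $\ga<N+2s-\om$ with $\ga(p-1)>N+2s$ (possible since $p>2+\frac{\om}{N+2s-\om}$), then using a \emph{dilated} barrier $v_\la(x)=w_{N+2s-\om}(\la x)$ with $\la>0$ small so that the positive contribution $Vv_\la\gtrsim\la^{\om-N-2s}|x|^{-(N+2s)}$ dominates the negative $\va^{2s}\Fs v_\la\gtrsim-\va^{2s}\la^{-N}|x|^{-(N+2s)}$, giving a positive lower bound for the left-hand side of order $|x|^{-(N+2s)}$ that can be compared with $u_\va^{p-1}\lesssim|x|^{-(N+2s)}$ via Lemma \ref{ws8}. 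Without identifying this step, your proof of the upper bound in (iv) does not go through.
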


\begin{remark}
Without imposing penalization, it is very hard to  get any priori upper decay estimate for positive solutions to \eq{eqs1.1} with $V$ decaying to $0$ at infinity, since we cannot compare the term $Vu$ with  $u^{p-1}$ at infinity, which is quite different  from the case
$\inf_{\rn}V>0$, where $Vu>>u^{p-1}$ at infinity as long as  $u\to0$ as $|x|\to\wq$.
However, as a consequence of some suitable penalization, it is possible to get the upper decay estimate of a particular solution $u$ to \eq{eqs1.1}. More precisely, to expect some upper decay property of candidate positive solutions,  it is natural to modify the nonlinearity $u^{p-1}$ out of $\La$ as
$\chi_{\La^c}\min\{u_+^{p-2},\frac{\va^\theta}{|x|^\tau} \}u_+$. If we successfully  recover the original problem, it holds naturally that
$u_+^{p-2}\le\frac{\va^\theta}{|x|^\tau}$ in $\La^c$, which gives the upper decay property of $u$.
Consequently, the key lies ultimately in whether we can find suitable $\theta$ and $\tau$. In particular, Theorem \r{thm1.3} implies the nonexistence of $\theta$ and $\tau$ for finding positive weak solutions to \eq{eqs1.1} under some conditions.
Additionally, Corollary \r{c30} implies that the lower decay estimates in Theorem \r{thm1.5} are almost optimal.
\end{remark}

Now we explain the  overview of our main ideas to prove the main results.

Normally, it should be preferential to consider the following functional corresponding to $\eq{eqs1.1}$
\begin{equation}\label{zr}
E_\va(v):=\frac{\varepsilon^{2s}}{2}\iint_{\R^{2N}}\frac{|u(x)-u(y)|^2}{|x-y|^{N+2s}}
+\frac{1}{2}\int_{\R^N}
V(x)u^2-\frac{1}{p}\int_{\rn}|u|^p,\ v\in \hv.
\end{equation}
However, $E_\va$ is not well-defined when $V$ decays very fast. For instance,  $\omega_\mu:=\frac{1}{(1+|x|^2)^{\frac{\mu}{2}}}\in \hv$ but $\int_{\rn}|w_\mu|^p=+\wq$   for any $\mu\in(\frac{N-2s}{2},\frac{N}{p})$  if $V\le \frac{C}{1+|x|^{2s}}$. Moreover, it is hard to verify  the (P.S.) condition directly only under the local assumption $(\mathcal{V})$ on $V$. To overcome these difficulties, we introduce some existence results in \cite{xsc2019} on a penalized problem to \eq{eqs1.1} by modifying the nonlinearity (see \eq{qaj} and Lemma \r{lem5.1}). Naturally, the next crucial step is to recover the original problem \eq{eqs1.1} from the penalized problem, which was also done in  \cite[Section IV]{xsc2019}. However, neither the admissible range of $p$ was  optimal, nor the relation between the admissible range of $p$ and the specific decay rate of $V$ was established there. As a result, we will construct more precise super-solutions to the linear problem \eq{q1t}. Different from \cite{xsc2019}, the super-solutions here are of the form
$$w_\mu:=\frac{1}{(1+|x|^2)^{\mu/2}},$$
where $\mu>0$ depends on the  decay rate of $V$. In particular, $\mu$ should be taken larger if $V$ decays slower (see Proposition \r{ed5}).
Though $w_\mu$ is smooth, due to the nonlocal nature of $(-\De)^s$, it is quite difficult to estimate $(-\De)^sw_\mu$. Moreover, delicate nonlocal analysis should also be used to establish the suitable integral properties of $w_\mu$ (see Proposition \r{cc5}).

As another  novelty, we develop some sharp nonexistence results to \eq{eqs1.1} in Theorem \r{thm1.3}. In the proof of Theorem \r{thm1.2} we use crucially the lower decay estimates of $(-\De)^sw_\mu$, while the proof of Theorem \r{thm1.3} depends strongly
on the upper decay estimates of $(-\De)^sw_\mu$.  After establishing a comparison principle, we  implement some delicate iteration processes to prove Theorem \r{thm1.3}. We emphasize here that this is quite different from that of the local case $s=1$ because of the different asymptotic behaviors of $(-\De)^sw_\mu$ and $-\De w_\mu$. For example, for any $\mu>0$, we have
$$-\va^2\De w_\mu+\frac{1}{|x|^2}w_\mu\ge 0,\quad |x|\ge1$$
provided that $\va>0$ is small. However, for $\mu=N$ and any $\va>0$, there exists $R_\va>0$ satisfying
$$\va^{2s}(-\De)^s w_N+\frac{1}{|x|^{2s}}w_N\le 0,\quad |x|\ge R_\va.$$

Taking again the  iteration processes based on comparison principle and the estimates of $(-\De)^sw_\mu$, we also give the lower
decay estimates for any positive weak solutions of \eq{eqs1.1} in Theorem \r{thm1.5}.
Theorems \r{thm1.3} and \r{thm1.5} reveals some essential differences between the non-vanishing case $\inf_{\rn}V(x)>0$ and the vanishing case $V(x)\to 0$ at infinity.
In particular, if $V(x)\equiv\la>0$, as shown in \cite{paj2012}, \eq{eqs1.1} has a positive weak  solution for any $p\in(2,2_s^*)$, and moreover, any positive weak solution $u$ of \eq{eqs1.1} must satisfy the following upper decay
$$u(x)(1+|x|^{N+2s})<+\wq.$$
However, if $V(x)\le \frac{C}{1+|x|^{\om}}$ with $\om\in(0,2s)$, \eq{eqs1.1} has no positive weak solution for any $p\in(2,2+\frac{\om}{N+2s-\om})$, and besides, any possible positive weak solution $u$ of \eq{eqs1.1} must satisfy
$$u(x)(1+|x|^{N+2s})=+\wq.$$

\vspace{0.2cm}

We organize this paper as follows: In Section \ref{sec5}, we give some notations and known results on a penalized problem to \eq{eqs1.1}.
In Section \ref{SQ}, we recover the original problem and complete the proof of Theorem \r{thm1.2}.  In Section \r{s6}, we present some nonexistence results and then verify Theorem \r{thm1.3}. In Section \r{dey}, we derive some important decay estimates and complete the proof of Theorem \r{thm1.5} and Corollary \r{c30}.

\vspace{0.2cm}

\section{Preliminaries}\label{sec5}
In this section, we give some notations and preliminaries, and summarize some results on a penalized problem to \eq{eqs1.1} according to \cite{xsc2019}.

For $s\in(0,1)$, the fractional Sobolev space $H^s(\rn)$ is defined as
$$
H^s(\R^N)=\Big\{u\in L^2(\R^N):\iint_{\R^{2N}}\frac{|u(x)-u(y)|^2}{|x-y|^{N+2s}}<\infty\Big\}
$$
endowed with the  norm
$$\|u\|_{H^s(\rn)}=\Big(\|u\|_{L^2(\rn)}^2+\iint_{\R^{2N}}\frac{|u(x)-u(y)|^2}{|x-y|^{N+2s}}\Big)^{\frac{1}{2}}.$$
For $N>2s$, the space $\dot H^s(\R^N)$ is defined as
$$\dot H^s(\R^N)=\Big\{u\in L^{2_s^{\ast}}(\R^N):\iint_{\R^{2N}}\frac{|u(x)-u(y)|^2}{|x-y|^{N+2s}}<\infty\Big\},$$
endowed with the  norm
$$\|u\|_{\dot{H}^s(\rn)}=\Big(\iint_{\R^{2N}}\frac{|u(x)-u(y)|^2}{|x-y|^{N+2s}}\Big)^{\frac{1}{2}}.$$

%For $K\subset \R^N$, the local fractional Sobolev space is given by
%$$H^s(K)=\Big\{u\in L^2(K)\mid\iint_{K\times K}\frac{|u(x)-u(y)|^2}{|x-y|^{N+2s}}<\infty \Big\}.$$
In this paper, we  will use the following weighted Hilbert space
$$
H_{V, \varepsilon}^s(\R^N):=\Big\{u\in \dot{H}^s(\R^N):\int_{\R^N}V(x)u^2<\infty \Big\},$$
with the inner product
$$
\langle u,v\rangle_\varepsilon=\varepsilon^{2s}\iint_{\R^{2N}}\frac{\big(u(x)-u(y)\big)\big(v(x)-v(y)\big)}{|x-y|^{N+2s}}
+\int_{\R^N}
V(x)uv$$
and the corresponding norm
$$\|u\|_\varepsilon=\Big(\varepsilon^{2s}\|u\|^2_{\dot{H}^s(\rn)}+\int_{\R^N}V(x)u^2\Big)^{\frac{1}{2}}.$$

We give the following elementary embedding properties.
\begin{proposition}\label{prop2.1}{\rm(\cite{ege2012, lr2008} Embedding\ inequalities)}
 For any $u\in\dot{H}^s(\R^N)$, there exists  a constant $C>0$ depending only on $N$ and $s$ such that
$$
\int_{\R^N}\frac{|u(x)|^2}{|x|^{2s}}\le C\|u\|^2_{\dot{H}^s(\rn)},\quad \|u\|_{L^{2_s^*}(\rn)}\le C\|u\|_{\dot{H}^s(\rn)}.
$$
Moreover, the embedding $H^s(\R^N)\subset L^q(\R^N)$ is continuous for  $q\in [2, 2_s^*]$, and the embedding $\dot{H}^s(\rn)\subset L_{\mathrm{loc}}^q(\rn)$ for $q\in[1,2_s^{\ast})$ is compact.
\end{proposition}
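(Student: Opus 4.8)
The plan is to deduce all four statements from two classical tools --- the Hardy--Littlewood--Sobolev (HLS) inequality and Schur's test for integral operators with homogeneous kernels --- once the Gagliardo seminorm has been rewritten spectrally. First I would recall that, up to the normalization constant fixed in the definition of $(-\Delta)^s$,
\begin{equation*}
\iint_{\R^{2N}}\frac{|u(x)-u(y)|^2}{|x-y|^{N+2s}}\,\d x\,\d y=C(N,s)\,\big\|(-\Delta)^{s/2}u\big\|_{L^2(\rn)}^2,\qquad u\in\S,
\end{equation*}
which follows from Plancherel's theorem (the Fourier symbol of $(-\Delta)^{s/2}$ being $|\xi|^s$) and then extends to all of $\dot H^s(\rn)$ by density. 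Setting $v:=(-\Delta)^{s/2}u\in L^2(\rn)$, so that $u=I_s v=c_{N,s}\,|x|^{-(N-s)}\ast v$ is the Riesz potential of order $s$ of $v$, puts both inequalities into a form the two tools can handle.

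For the Sobolev inequality I would apply HLS to $u=I_s v$: with $\tfrac1{2_s^*}=\tfrac12-\tfrac sN$ this gives $\|u\|_{L^{2_s^*}(\rn)}\le C\|v\|_{L^2(\rn)}=C\big\|(-\Delta)^{s/2}u\big\|_{L^2(\rn)}\le C'\,\|u\|_{\dot H^s(\rn)}$, which is the second claimed inequality. For the weighted (Hardy) inequality I would bound $\big\||x|^{-s}I_s v\big\|_{L^2(\rn)}$ by $C\|v\|_{L^2(\rn)}$ directly: the operator $Tv(x):=|x|^{-s}I_s v(x)$ has kernel $K(x,y)=c_{N,s}\,|x|^{-s}|x-y|^{-(N-s)}$, homogeneous of degree $-N$, so Schur's test with the power weight $w(x)=|x|^{-N/2}$ applies. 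Indeed, the substitution $y=|x|z$ gives $\int_{\rn}K(x,y)\,w(y)\,\d y=w(x)\int_{\rn}|z|^{-N/2}\,|e-z|^{-(N-s)}\,\d z$ for any unit vector $e$, and the remaining integral is finite precisely because $N/2<N$ (integrability near $0$), $N-s<N$ (near $e$), and $\tfrac{3N}{2}-s>N$, i.e.\ $N>2s$ (at infinity); the companion Schur condition is entirely analogous and again uses only $N>2s$. Hence $T$ is bounded on $L^2(\rn)$, which is the first claimed inequality. (Alternatively, one may quote the sharp fractional Hardy inequality from \cite{ege2012,lr2008} and skip this computation.)

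The two embedding statements then follow by bookkeeping. For $H^s(\rn)\subset L^q(\rn)$: the endpoints $q=2$ (trivial) and $q=2_s^*$ (the Sobolev inequality together with $\|u\|_{\dot H^s(\rn)}\le\|u\|_{H^s(\rn)}$) give all intermediate $q$ through the interpolation inequality $\|u\|_{L^q}\le\|u\|_{L^2}^{\theta}\|u\|_{L^{2_s^*}}^{1-\theta}$ with $\tfrac1q=\tfrac\theta2+\tfrac{1-\theta}{2_s^*}$, $\theta\in(0,1)$. For the local compactness $\dot H^s(\rn)\subset L^q_{\mathrm{loc}}(\rn)$, $q\in[1,2_s^*)$: given $(u_n)$ bounded in $\dot H^s(\rn)$ and a ball $B_R$, the Hardy inequality gives $\int_{B_R}|u_n|^2\le R^{2s}\int_{\rn}|u_n|^2|x|^{-2s}\le C R^{2s}\|u_n\|_{\dot H^s(\rn)}^2$, so $(u_n)$ is bounded in $H^s(B_R)$; the fractional Rellich--Kondrachov theorem (via Fr\'echet--Kolmogorov, the $L^2$-equicontinuity of translations being controlled by the Gagliardo seminorm) yields $H^s(B_R)\hookrightarrow\hookrightarrow L^q(B_R)$, and a diagonal extraction over $R\in\N$ produces a subsequence converging in $L^q_{\mathrm{loc}}(\rn)$.

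The only genuine obstacle I anticipate is the Hardy inequality (the second paragraph): the Sobolev inequality and Rellich--Kondrachov are standard inputs and the embedding statements are elementary interpolation, whereas the Schur-test argument has to be arranged so that the degree-$(-N)$ homogeneity of the kernel is exploited correctly and the convergence of the Schur integral at infinity is pinned exactly to the standing hypothesis $N>2s$.
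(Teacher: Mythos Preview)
The paper does not supply its own proof of this proposition: it is stated as a quotation from \cite{ege2012,lr2008} and used as input throughout. Your proposal is therefore not to be compared against an argument in the paper but assessed on its own merits, and it is correct.

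A brief comparison with the cited sources is still worth recording. Your route to the fractional Hardy inequality---rewrite $u=I_s v$ with $v=(-\Delta)^{s/2}u$ and bound the degree-$(-N)$ homogeneous operator $v\mapsto|x|^{-s}I_s v$ on $L^2$ via the weighted Schur test with $w(x)=|x|^{-N/2}$---is clean and makes the role of the hypothesis $N>2s$ completely transparent (it is exactly the convergence condition for the Schur integrals, once at infinity in the first condition and once at the origin in the second). The argument in \cite{lr2008}, by contrast, proceeds through a ground-state representation and yields the sharp constant, which your method does not; for the purposes of this paper the sharp constant is irrelevant, so nothing is lost. Your derivations of the Sobolev inequality (HLS applied to the Riesz potential), of the continuous embedding $H^s\hookrightarrow L^q$ (interpolation between $q=2$ and $q=2_s^*$), and of the local compactness (Hardy gives $L^2(B_R)$ control, hence $H^s(B_R)$ boundedness, then fractional Rellich--Kondrachov from \cite{ege2012} and a diagonal extraction) are the standard ones and match what a reader would reconstruct from the citations.

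One small remark on the compactness step: you invoke Hardy with balls centred at the origin, which suffices because any compact set sits inside some $B_R(0)$; it may be worth saying this explicitly so that ``$L^q_{\mathrm{loc}}$'' is unambiguously covered.
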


Let $\{\mathcal{P}_\va\}_\va\subset L^\wq(\rn)$ be a family of nonnegative functions
satisfying
\begin{equation}\label{eqs2.1}
\mathcal{P}_{\varepsilon}(x)=0~ \text{for}~ x\in~\Lambda\ \text{and}\  \lim\limits_{\varepsilon\to 0} \| \mathcal{P}_{\varepsilon}\|_{L^\infty (\R^N)}=0.
\end{equation}
Moreover, we impose the following two embedding assumptions on $\{\mathcal{P}_\va\}_\va$,

$\left(\mathcal{P}_{1}\right)$  the space $H_{V,\varepsilon}^s\left(\mathbb{R}^{N}\right)$ is compactly embedded into $L^{2}\left(\mathbb{R}^{N}, \mathcal{P}_{\varepsilon}(x)\mathrm{d} x\right)$,

$\left(\mathcal{P}_{2}\right)$  there exists $\kappa\in (0,\frac{1}{2})$ such that
$$
 \int_{\mathbb{R}^{N}}\mathcal{P}_{\varepsilon}(x) u^2 \leq \kappa\Big( \va^{2s}\iint_{\R^{2N}}\frac{|u(x)-u(y)|^2}{|x-y|^{N+2s}}+\int_{\rn}V(x)u^2\Big).
$$
In this section, we do not need to know the specific form of $\mathcal{P}_\va$, the exact range of $p$ and the decay property of $V$, which only work in Section \r{SQ} to recover the original problem.

Consider the following auxiliary problem to \eq{eqs1.1}
\begin{equation}\label{qaj}
  \va^{2s}(-\De)^su+Vu=\chi_{\La}(x)u_+^{p-1}+\chi_{\rn\backslash\La}(x)g_\va(x,u)u_+,
\end{equation}
where $g_\va(x,t):=\min\{t_+^{p-2},\mathcal{P}_\va(x)\}$. Define $F_\va(x,t):=\int_0^tg_\va(x,r)r_+ dr$.
Then the functional corresponding to \eq{qaj} is given by
$$J_\va(u):=\frac{1}{2}\|u\|_\va^2-\frac{1}{p}\int_{\La}u_+^{p}-\int_{\rn\backslash\La}F_\va(x, u_+),\quad u\in H^s_{V,\va}(\rn).$$
By \cite[Lemma 2.5]{xsc2019}, $J_\va\in C^1$ in $H_{V,\va}^s(\rn)$ and satisfies the Mountain-Pass geometry. Define the Mountain-Pass value $c_{\va}$  as
\begin{equation}\label{Ade2.11}
  c_{\va}:=\inf_{\gamma\in \Gamma_{\va}}\max_{t\in[0,1]}J_{\varepsilon}(\gamma(t)),
\end{equation}
where
$\Gamma_{\va}:=\big\{\gamma\in C\big([0,1],\hv\big)\mid\gamma(0)=0,\ J_{\varepsilon}\big(\gamma(1)\big)<0\big\}.$

According to \cite[Lemmas 2.6 and 3.6]{xsc2019}, we have the following results on \eq{qaj}.
\begin{lemma}\label{lem5.1}(\cite{xsc2019})Let $V$ satisfy ($\mathcal{V}$),  $p\in (2,2_s^*)$ and $\left(\mathcal{P}_1\right)$-$\left(\mathcal{P}_2\right)$ hold.
Then $c_{\varepsilon}$ can be achieved by some $u_{\va}\in \hv\cap C(\rn)$, which is a positive weak solution of the penalized equation \eqref{qaj}.
Moreover,  there exists a family of points $\{x_\va\}_\va\subset \rn$ such that $u_\va(x_\va)=\max_{x\in \bar{\La}}u_\va(x)$ and

$({\rm\romannumeral1})~~\liminf\limits_{\va\to0}u_\va(x_\va)>0;$

$({\rm\romannumeral2})~\lim\limits_{\va\to0}V(x_{\varepsilon})=V_0;$

$({\rm\romannumeral3})\liminf\limits_{\va\to0}{\rm dist}(x_{\varepsilon}, \Lambda^c)>0;$

$({\rm\romannumeral4})\limsup\limits_{R\to\wq}\limsup \limits_{\va\to0}\|u_{\varepsilon}\|_{L^{\infty}(\La\setminus B_{\va R}(x_{\varepsilon}))}=0.$
\end{lemma}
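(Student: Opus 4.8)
The plan is to obtain Lemma~\ref{lem5.1} as a direct application of the penalization scheme of \cite{mp1996,co2016} adapted to the fractional setting, exactly as done in \cite{xsc2019}; since the statement is quoted verbatim from that paper, the task is to recall the structure of the argument rather than to reinvent it. First I would verify the variational setup: under $(\mathcal{V})$ and $(\mathcal{P}_1)$--$(\mathcal{P}_2)$ the penalized functional $J_\va$ is well defined and $C^1$ on $H^s_{V,\va}(\rn)$, because the penalized nonlinearity $g_\va(x,t)u_+=\chi_{\La}u_+^{p-1}+\chi_{\La^c}\min\{u_+^{p-2},\mathcal{P}_\va(x)\}u_+$ grows at most like $u_+^{p-1}$ on the bounded set $\La$ (controlled via the subcritical embedding $H^s(\rn)\hookrightarrow L^p$ on bounded domains, Proposition~\ref{prop2.1}) and at most like $\mathcal{P}_\va(x)u_+$ on $\La^c$ (controlled by $(\mathcal{P}_1)$). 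The Mountain-Pass geometry is then routine: the quadratic term dominates near the origin thanks to $(\mathcal{P}_2)$ with $\k<\tfrac12$, and along a fixed positive direction supported in $\La$ the $\tfrac1p\int_\La u_+^p$ term forces $J_\va\to-\infty$, so $c_\va$ in \eqref{Ade2.11} is a well-defined positive critical level.

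Next I would establish the Palais--Smale condition at level $c_\va$, which is the heart of the matter and the step I expect to be the main obstacle. Given a $(PS)_{c_\va}$ sequence $\{u_n\}$, boundedness in $\|\cdot\|_\va$ follows from the standard $J_\va(u_n)-\tfrac1\ta\langle J_\va'(u_n),u_n\rangle$ computation together with the fact that $g_\va$ satisfies a one-sided Ambrosetti--Rabinowitz-type inequality after the penalization (this is precisely where the penalized term is designed so that $\tfrac1p F_\va$-type remainder is absorbed using $(\mathcal{P}_2)$). Passing to a weak limit $u$, the delicate point is compactness of the nonlinear terms: on $\La$ one uses the compact embedding $\dot H^s\hookrightarrow L^q_{\rm loc}$ for $q<2^*_s$; on $\La^c$ one uses assumption $(\mathcal{P}_1)$, i.e. $H^s_{V,\va}\hookrightarrow L^2(\rn,\mathcal{P}_\va\,dx)$ compactly, to pass to the limit in $\int_{\La^c}\min\{u_{n+}^{p-2},\mathcal{P}_\va\}u_{n}\var$. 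Combining these, $u$ is a weak solution and $\|u_n\|_\va\to\|u\|_\va$ via the Brezis--Lieb-type splitting, so $u_n\to u$ strongly; hence $c_\va$ is attained by some $u_\va$.

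Having a minimizer $u_\va$ of the constrained problem (or equivalently a Mountain-Pass critical point), I would check it is nonnegative and nontrivial by testing with $u_{\va-}$ and using that $c_\va>0$, then invoke the fractional De Giorgi--Nash--Moser / Caffarelli--Silvestre regularity theory to get $u_\va\in C(\rn)$, and the strong maximum principle for $(-\Delta)^s$ to upgrade to $u_\va>0$, so $u_\va$ solves \eqref{qaj} in the weak (hence classical interior) sense. For the concentration properties (i)--(iv), I would follow the Moser iteration / scaling argument of \cite{co2016}: pick $x_\va$ where $u_\va$ attains its max over $\bar\La$; a uniform $L^\infty$ bound plus a nontriviality lower bound on $c_\va$ (comparing with the limit Mountain-Pass level of the autonomous equation $(-\Delta)^s w + V_0 w = w^{p-1}$) gives (i); the energy estimate $c_\va\to$ the ground-state level of the $V_0$-autonomous problem, together with the behaviour of $V$ near $\{V=V_0\}$, yields (ii); property (iii) follows because if $x_\va\to\pa\La$ then the penalization would be active near $x_\va$, contradicting (i) and the lower energy bound; and (iv) is the uniform decay-away-from-concentration estimate, obtained by a rescaled Moser iteration showing $u_\va(\va R\,\cdot + x_\va)$ converges to the ground state, which decays, so the tails in $L^\infty(\La\setminus B_{\va R}(x_\va))$ vanish as $R\to\wq$ uniformly in small $\va$. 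All of these are the fractional analogues of \cite{mp1996,co2016} and are carried out in \cite{xsc2019}; I would cite those and reproduce only the estimates that differ. The genuinely nontrivial ingredient, and the one I would spend most effort on, is verifying $(PS)_{c_\va}$ under only the local condition $(\mathcal{V})$ plus $(\mathcal{P}_1)$--$(\mathcal{P}_2)$, since without $(\mathcal{P}_1)$ the loss of compactness at infinity (where $V$ may vanish) is fatal.
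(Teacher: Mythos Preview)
Your proposal is correct and aligns with the paper's treatment: the paper does not give its own proof of Lemma~\ref{lem5.1} but simply imports it from \cite[Lemmas~2.6 and~3.6]{xsc2019}, and your sketch accurately reconstructs the standard del~Pino--Felmer penalization argument (mountain-pass geometry, $(PS)_{c_\va}$ via $(\mathcal{P}_1)$--$(\mathcal{P}_2)$, regularity and strong maximum principle, concentration via rescaling and comparison with the autonomous ground state) that \cite{xsc2019} carries out. Since the lemma is quoted as a black box here, your level of detail already exceeds what the paper provides.
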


As an important supplement, next we provide a uniform global $L^\wq$-estimate on $u_\va$ by Moser iteration, which will be used in Section \r{SQ}.
\begin{lemma}\label{w}
Let  $u_\va$ be given by Lemma \r{lem5.1}. Then there exists a constant $C>0$ independent of  $\va>0$ such that
\begin{equation*}\label{eqs4.6}
\|u_\va\|_{L^\wq(\rn)}\le C.
\end{equation*}
\end{lemma}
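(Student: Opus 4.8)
The plan is to run a De Giorgi–Moser iteration on the penalized equation \eqref{qaj}, exploiting that the right-hand side nonlinearity has been truncated so that it is, roughly, subcritical with a uniformly controlled coefficient. First I would observe that on $\Lambda$ the nonlinearity is $u_\va^{p-1}$ with $p<2_s^*$, and on $\rn\setminus\Lambda$ it is $g_\va(x,u_\va)u_\va = \min\{u_{\va,+}^{p-2},\mathcal P_\va(x)\}\,u_{\va,+} \le u_{\va,+}^{p-1}$; hence in all of $\rn$ one has $0\le \chi_\Lambda u_{\va,+}^{p-1}+\chi_{\rn\setminus\Lambda}g_\va(x,u_\va)u_{\va,+}\le u_{\va,+}^{p-1}$, so it suffices to bound solutions of the differential inequality $\va^{2s}\Fs u_\va + V u_\va \le u_{\va,+}^{p-1}$ with $u_\va\in\hv$. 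Since $V\ge0$ we may even drop $Vu_\va$ for the upper bound and work with $\va^{2s}\Fs u_\va\le u_{\va,+}^{p-1}$; the $\va$-factor is harmless since we only need a bound for each fixed small $\va$, but in fact I want uniformity in $\va$, so I will keep track of it carefully below via a rescaling.

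The key steps, in order. (1) Rescale: set $v_\va(x):=u_\va(\va x + x_\va)$. Then $v_\va$ solves $\Fs v_\va + \tilde V v_\va = \chi_{\tilde\Lambda}v_{\va,+}^{p-1}+\chi_{\tilde\Lambda^c}\tilde g_\va v_{\va,+}$ on $\rn$ with $\tilde V(x)=V(\va x+x_\va)\ge0$ bounded (by $(\mathcal V)$-type local boundedness near $\Lambda$, plus the decay hypotheses globally), and the full RHS is $\le v_{\va,+}^{p-1}$ pointwise; crucially $[v_\va]_s = \va^{(2s-N)/2+?}\cdots$ — more simply, $\|v_\va\|_{L^{2_s^*}(\rn)} = \va^{-N/2_s^*+\,\cdots}$; here I would instead argue directly without rescaling, because Lemma~\ref{lem5.1}(iv) already gives $\|u_\va\|_{L^\infty(\Lambda\setminus B_{\va R}(x_\va))}\to 0$, so only the behavior on $B_{\va R}(x_\va)$ and on $\Lambda^c$ is at issue. (2) On $\Lambda^c$: there $u_\va$ satisfies $\va^{2s}\Fs u_\va + Vu_\va \le \mathcal P_\va(x)u_{\va,+}$, a \emph{linear} inequality with coefficient $\mathcal P_\va\in L^\infty$ and $\|\mathcal P_\va\|_\infty\to0$; combined with a global energy bound $\|u_\va\|_\va\le C$ (which follows from the Mountain–Pass level $c_\va$ being bounded, itself standard and effectively in \cite{xsc2019}), a Moser iteration using the fractional Sobolev inequality (Proposition~\ref{prop2.1}) and the Caffarelli–Silvestre / Kato-type test function $\min\{u_\va,M\}^{2\beta-1}$ on the nonlocal form yields $\|u_\va\|_{L^\infty(\Lambda^c)}\le C\|u_\va\|_{L^{2_s^*}(\rn)}\le C'$ uniformly. (3) On the core region $B_{\va R}(x_\va)\subset\Lambda$ (for fixed large $R$): there $u_\va$ satisfies $\va^{2s}\Fs u_\va\le u_{\va,+}^{p-1}$ with $p<2_s^*$; after the rescaling $v_\va(y)=u_\va(\va y+x_\va)$ this becomes $\Fs v_\va \le v_{\va,+}^{p-1}$ on $B_R(0)$ with, by step (2) and Lemma~\ref{lem5.1}(iv), a \emph{uniform} $L^{2_s^*}$ bound on $v_\va$ over $B_R$ coming from the scale-invariance of the $\dot H^s$–$L^{2_s^*}$ pair: $\|v_\va\|_{L^{2_s^*}(B_R)}\le \|u_\va\|_{\dot H^s}\cdot(\text{const})$ — here one must be slightly careful and instead use $\|v_\va\|_{L^{2_s^*}(\rn)}=\|u_\va\|_{L^{2_s^*}(\rn)}^{}\cdot\va^{?}$; the clean statement is that the $\dot H^s$ seminorm and the $L^{2_s^*}$ norm are both invariant under $u\mapsto \va^{(N-2s)/2}u(\va\cdot)$, so choosing that normalization of $v_\va$ turns the equation into $\Fs v_\va\le c\,v_{\va,+}^{p-1}$ with $\|v_\va\|_{L^{2_s^*}}\le C$ and $c=\va^{(N-2s)(p-2)/2 - ?}$, which is bounded for small $\va$. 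Run the subcritical Moser iteration on $B_1(0)$ (De Giorgi classes for the fractional Laplacian, cf. Di Nezza–Palatucci–Valdinoci or Brasco–Lindgren–Schikorra) to get $\sup_{B_{1/2}} v_\va\le C(\|v_\va\|_{L^{2_s^*}(B_1)}, c)$, uniformly; undoing the scaling gives $\|u_\va\|_{L^\infty(B_{\va/2}(x_\va))}\le C\va^{-(N-2s)/2}$ — which is \emph{not} uniform, so this naive route fails and I must instead iterate the original (unscaled) equation directly. (4) Therefore the correct final step is: combine the bound near infinity/on $\Lambda^c$ from step (2), the $L^\infty$ bound on $\Lambda\setminus B_{\va R}(x_\va)$ from Lemma~\ref{lem5.1}(iv), and a \emph{scale-invariant} Moser iteration on the rescaled equation $\Fs v_\va + \va^{2s}$-scaled-$\tilde V v_\va \le v_{\va,+}^{p-1}$ for $v_\va(y)=u_\va(\va y+x_\va)$: since $p<2_s^*$, the self-improving Moser scheme for $\Fs$ gives $\|v_\va\|_{L^\infty(B_1)}\le C\,\big(\|v_\va\|_{L^{2_s^*}(B_2)}+\|v_\va\|_{L^{2_s^*}(B_2)}^{(p-1)/(\cdots)}\big)$ with $C$ depending only on $N,s,p$, and $\|v_\va\|_{L^{2_s^*}(B_2)}\le\|v_\va\|_{L^{2_s^*}(\rn)}$ which equals $\|u_\va\|_{L^{2_s^*}(\rn)}$ up to the fixed factor $\va^{?}$ — bounded because $\|u_\va\|_{\dot H^s}$ is bounded and the $L^{2_s^*}$ norm scales like the $\dot H^s$ seminorm. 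Hence $u_\va(x_\va)=v_\va(0)\le C$, and since by Lemma~\ref{lem5.1} $x_\va$ is (asymptotically) an interior maximizer over $\bar\Lambda$, together with steps (2) and Lemma~\ref{lem5.1}(iv) we conclude $\|u_\va\|_{L^\infty(\rn)}\le C$.

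The main obstacle is exactly the bookkeeping of the $\va$-scaling so that the Moser constant comes out \emph{uniform} in $\va$: one must choose the rescaling $v_\va(y)=u_\va(\va y + x_\va)$ (which leaves $\Fs$ with no $\va$ in front, turns $\va^{2s}V$ into $V(\va y+x_\va)\to V_0$-bounded, and makes the nonlinearity $v_{\va,+}^{p-1}$ with coefficient $1$), and then verify that the only quantity fed into the iteration, namely $\|v_\va\|_{L^{2_s^*}(\rn)}$, is bounded — this is where one uses that the pair $(\dot H^s,\,L^{2_s^*})$ is invariant under $u\mapsto \va^{(N-2s)/2}u(\va\cdot)$ so that $\|v_\va\|_{L^{2_s^*}(\rn)} = \va^{-(N-2s)/2}\|u_\va\|_{L^{2_s^*}(\rn)}$ is \emph{not} bounded, forcing one to instead use the energy-normalized rescaling $\tilde v_\va = \va^{(N-2s)/2}u_\va(\va\cdot + x_\va)$ for which $\|\tilde v_\va\|_{L^{2_s^*}}=\|u_\va\|_{L^{2_s^*}}\le C$ and $\|\tilde v_\va\|_{\dot H^s}=\|u_\va\|_{\dot H^s}\le C$, at the cost of the nonlinearity becoming $\va^{-(N-2s)(p-2)/2+2s}\tilde v_{\va,+}^{p-1}$; the exponent of $\va$ here is $2s-(N-2s)(p-2)/2$, which is nonnegative precisely when $p\le 2+\frac{4s}{N-2s}$ — not always true. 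The resolution (and the actual content of the lemma) is that one does \emph{not} need a scale-invariant iteration on the core: Lemma~\ref{lem5.1}(i) gives $\liminf u_\va(x_\va)>0$ and a standard blow-up/contradiction argument — if $\|u_\va\|_\infty=M_\va\to\infty$, rescale $w_\va(y)=M_\va^{-1}u_\va(M_\va^{-(p-2)/(2s)}\va^{?}y + y_\va)$ around a near-maximum point $y_\va$, extract a bounded nonzero limit $w$ solving $\Fs w = w^{p-1}$ on $\rn$ (the potential term vanishes in the limit and the penalization term is harmless since $M_\va\to\infty$), with $w\le 1=w(0)$; a Liouville-type/Pohozaev argument (or the known classification of positive solutions of $\Fs w=w^{p-1}$ in the subcritical range $p<2_s^*$, which forces $w\equiv0$) gives a contradiction. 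So the genuine hard step is setting up this blow-up correctly — choosing the blow-up scale, checking the rescaled penalization and potential terms vanish, getting local compactness for $\Fs$ (via Proposition~\ref{prop2.1}'s compact local embedding and interior regularity for the fractional Laplacian), and invoking subcritical Liouville — and everything else is routine. I would write the proof via this blow-up argument rather than a direct iteration, as it most cleanly delivers the uniform-in-$\va$ bound.
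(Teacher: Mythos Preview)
Your proposal contains a genuine gap: the very scaling obstacle you identify is not actually an obstacle, and the detour through a blow-up/Liouville argument is unnecessary.

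The point you miss is that the Mountain--Pass level satisfies $c_\va\le C\va^N$ (this is \cite[Lemma~3.2]{xsc2019}, and is the standard upper bound obtained by testing with a rescaled ground state). Combined with $J_\va(u_\va)-\tfrac{1}{p}\langle J'_\va(u_\va),u_\va\rangle\ge (\tfrac12-\tfrac1p)(1-\kappa)\|u_\va\|_\va^2$, this gives $\|u_\va\|_\va^2\le C\va^N$, not merely $\|u_\va\|_\va\le C$. Hence for $\tilde u_\va(x):=u_\va(\va x)$ (or equivalently your $v_\va$ centered at $x_\va$), the change of variables yields
\[
\|\tilde u_\va\|_{L^{2_s^*}(\rn)}^2\le C\|\tilde u_\va\|_{\dot H^s(\rn)}^2=\frac{C}{\va^{N}}\,\va^{2s}\|u_\va\|_{\dot H^s(\rn)}^2\le \frac{C}{\va^N}\|u_\va\|_\va^2\le C,
\]
uniformly in $\va$. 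Your claim that ``$\|v_\va\|_{L^{2_s^*}(\rn)}=\va^{-(N-2s)/2}\|u_\va\|_{L^{2_s^*}(\rn)}$ is not bounded'' is therefore wrong: the factor $\va^{-(N-2s)/2}$ is exactly cancelled by $\|u_\va\|_{L^{2_s^*}}=O(\va^{(N-2s)/2})$. With this uniform $L^{2_s^*}$ control and the pointwise inequality $(-\Delta)^s\tilde u_\va\le \tilde u_\va^{p-1}$ (which you correctly derived), a single \emph{global} Moser iteration with truncated power test functions $\varphi_{\beta,T}(\tilde u_\va)\varphi'_{\beta,T}(\tilde u_\va)$ gives $\|\tilde u_\va\|_{L^\infty(\rn)}\le C$ directly --- no localization, no splitting into $\Lambda$, $\Lambda^c$, and the core $B_{\va R}(x_\va)$, and no appeal to Lemma~\ref{lem5.1}(iv). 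This is exactly what the paper does.

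Your proposed blow-up alternative could in principle be made to work, but it imports a subcritical fractional Liouville theorem not available in the paper, and your sketch leaves open where the global maximum lies (Lemma~\ref{lem5.1} only gives the maximum over $\bar\Lambda$) and how the penalized nonlinearity behaves under the blow-up rescaling when the maximum falls in $\Lambda^c$. These are fixable, but the route is strictly heavier than the paper's once you have the correct energy estimate.
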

\begin{proof}
Note
$$\frac{1}{p}g_\va(x,t)t_+^2-F_\va(x,t)\ge \Big(\frac{1}{p}-\frac{1}{2}\Big)\mathcal{P}_\va t_+^2.$$
By ($\mathcal{P}_2$), we get
\begin{align}\label{q23}
J_\va(u_\va)-\frac{1}{p}\langle J'_\va(u_\va),u_\va\rangle=&\Big(\frac{1}{2}-\frac{1}{p}\Big)\|u_\va\|_\va^2+\int_{\rn\backslash\La}
\Big(\frac{1}{p}g_\va(x,u_\va)u^2_{\va,+}-F_\va(x,u_\va)\Big)\nonumber\\
&\ge\Big(\frac{1}{2}-\frac{1}{p}\Big)(1-\kappa)\|u_\va\|_\va^2.
\end{align}
By \cite[Lemma 3.2]{xsc2019}, $J_\va(u_\va)=c_\va\le C\va^N$. Then it follows from \eq{q23} and $J'_\va(u_\va)=0$ that
$\|u_\va\|^2_\va \le C\va^N.$
Define $\tilde{u}_\va(x)=u_\va(\va x)$. By Sobolev inequality and a change of variable, we obtain
\begin{equation}\label{dff}
\begin{aligned}
&\|\tilde{u}_\va\|^2_{L^{2_s^*}(\rn)}\le C\|\tilde{u}_\va\|^2_{\dot{H}^s(\rn)}= \frac{C}{\va^N}\va^{2s}\|u_\va\|^2_{\dot{H}^s(\rn)}
\le\frac{C}{\va^N}\|u_\va\|_\va^2\le C.
\end{aligned}
\end{equation}
Recalling that $u_\va$ is a positive weak solution to \eq{qaj}, by rescaling,  $\tilde{u}_\va$ weakly satisfies
\begin{equation}\label{d9}
\begin{aligned}
\Fs \tilde{u}_\va\le \tilde{u}_\va^{p-1},\quad x\in\rn.
\end{aligned}
\end{equation}

Let $\be\ge1$ and $T>0$. Define
\begin{equation*}\label{qdj}
\varphi_{\be,T}(t)=\left\{\begin{array}{l}
0, \text { if } t \leqslant 0, \vspace{1mm}\\
t^{\beta}, \text { if } 0<t<T, \vspace{1mm}\\
\beta T^{\beta-1}(t-T)+T^{\beta}, \text { if } t \geqslant T.
\end{array}\right.
\end{equation*}
Note that $\varphi_{\be,T}$ is convex and Lipschitz, there hold
\begin{align}\label{wql}
  \varphi_{\be,T}(\tilde{u}_\va), \varphi_{\be,T}'(\tilde{u}_\va)\ge0\ \mathrm{and}\ \varphi_{\be,T}(\tilde{u}_\va), \varphi_{\be,T}(\tilde{u}_\va)\varphi_{\be,T}'(\tilde{u}_\va)\in \dot{H}^s(\rn).
\end{align}
Moreover, since $\tilde{u}_\va>0$ and $\varphi_{\be,T}$ is convex, we have
\begin{align*}
 &\big|\varphi_{\be,T}(\tilde{u}_\va(x))-\varphi_{\be,T}(\tilde{u}_\va(y))\big|^2\\
 =&\big(\varphi_{\be,T}(\tilde{u}_\va(x))\big)^2+\big(\varphi_{\be,T}(\tilde{u}_\va(y))\big)^2
 -\varphi_{\be,T}(\tilde{u}_\va(y))\varphi_{\be,T}(\tilde{u}_\va(x))-\varphi_{\be,T}(\tilde{u}_\va(x))\varphi_{\be,T}(\tilde{u}_\va(y))\\
 \le&\big(\varphi_{\be,T}(\tilde{u}_\va(x))\big)^2+\big(\varphi_{\be,T}(\tilde{u}_\va(y))\big)^2
 -\varphi_{\be,T}(\tilde{u}_\va(y))\big(\varphi_{\be,T}(\tilde{u}_\va(y))+\varphi'_{\be,T}(\tilde{u}_\va(y))(\tilde{u}_\va(x)-\tilde{u}_\va(y))\big)\\
 &-\varphi_{\be,T}(\tilde{u}_\va(x))\big(\varphi_{\be,T}(\tilde{u}_\va(x))+\varphi'_{\be,T}(\tilde{u}_\va(x))(\tilde{u}_\va(y)-\tilde{u}_\va(x))\big)\\
 =&(\tilde{u}_\va(x)-\tilde{u}_\va(y))\big(\varphi_{\be,T}(\tilde{u}_\va(x))\varphi'_{\be,T}(\tilde{u}_\va(x))-
 \varphi_{\be,T}(\tilde{u}_\va(y))\varphi'_{\be,T}(\tilde{u}_\va(y))\big).
\end{align*}
It follows by Sobolev inequality, \eq{d9} and \eq{wql} that
\begin{align}\label{dk}
&\|\varphi_{\be,T}(\tilde{u}_\va)\|^2_{L^{2_s^*}(\rn)}\nonumber\\
\le& C\iint_{\R^{2N}}\frac{|\varphi_{\be,T}(\tilde{u}_\va(x))-\varphi_{\be,T}(\tilde{u}_\va(y))|^2}{|x-y|^{N+2s}}\nonumber\\
\le& C\iint_{\R^{2N}}\frac{(\tilde{u}_\va(x)-\tilde{u}_\va(y))\big(\varphi_{\be,T}(\tilde{u}_\va(x))\varphi_{\be,T}'(\tilde{u}_\va(x))
-\varphi_{\be,T}(\tilde{u}_\va(y))\varphi_{\be,T}'(\tilde{u}_\va(y))\big)}{|x-y|^{N+2s}}\nonumber\\
\le& C\int_{\rn}\tilde{u}_\va^{p-1}\varphi_{\be,T}(\tilde{u}_\va)\varphi_{\be,T}'(\tilde{u}_\va).
\end{align}
Since $t\varphi_{\be,T}'(t)\le\be\varphi_{\be,T}(t)$, we obtain from $\eq{dk}$ that
\begin{align}\label{dl}
&\|\varphi_{\be,T}(\tilde{u}_\va)\|^2_{L^{2_s^*}(\rn)}
\le C\be\int_{\rn}\tilde{u}_\va^{p-2}\big(\varphi_{\be,T}(\tilde{u}_\va)\big)^2.
\end{align}
Letting $T\to\wq$, by Monotone Convergence Theorem, we get
\begin{align}\label{ttog}
  \Big(\int_{\rn}\tilde{u}_\va^{\be2_s^*}\Big)^{\frac{2}{2_s^*}}\le C\be\int_{\rn}\tilde{u}_\va^{2\be+p-2}.
\end{align}
Set $\{\be_{i}\}_{i\ge1}$ so that
$$2\be_{i+1}+p-2=\be_i2_s^*,\ \ \beta_0=1,$$
i.e.,
$\be_{i+1}+d=\frac{2_s^*}{2}(\be_i+d)$ with $d=\frac{p-2}{2-2_s^*}> -1$.
Then, letting $\be=\be_{i+1}$ in $\eq{ttog}$, we derive
\begin{equation*}\label{eg}
\begin{aligned}
\Big(\int_{\rn}\tilde{u}_\va^{2_s^*\be_{i+1}}\Big)^\frac{1}{2_s^*(\be_{i+1}+d)}
\le(C\be_{i+1})^\frac{1}{2(\be_{i+1}+d)}\Big(\int_{\rn}\tilde{u}_\va^{2_s^*\be_{i}}\Big)^\frac{1}{2_s^*(\be_{i}+d)}.
\end{aligned}
\end{equation*}
As a result, it follows immediately by iteration and \eq{dff} that
\begin{align}
  \Big(\int_{\rn}\tilde{u}_\va^{2_s^*\be_{i}}\Big)^\frac{1}{2_s^*(\be_{i}+d)}\le\prod_{i=1}^{\infty}(C\be_i)^{\frac{1}{2(\be_i+d)}}
\Big(\int_{\rn}\tilde{u}_\va^{2_s^*}\Big)^\frac{1}{2_s^*(1+d)}\le C,\nonumber
\end{align}
where $C>0$ is a constant independent of $i$ and $\va$. Letting $i\to\wq$, we conclude that
$\|\tilde{u}_\va\|_{L^\infty(\rn)}\le C$
uniformly for $\va$. Then the proof is completed by the definition of $\tilde{u}_\va$.
\end{proof}

\vspace{0.2cm}

\section{Proof of Theorem \r{thm1.2}}\label{SQ}
In this section, we come back to the original problem \eq{eqs1.1} and complete the proof of Theorem \r{thm1.2}.
For convenience, in this section, we always
let $u_\va$ and $\{x_{\varepsilon}\}$ be given by Lemma \ref{lem5.1} and
$$v_{\varepsilon}(x)=u_{\varepsilon}(\varepsilon x + x_{\varepsilon}),\ V_\va(x):=V(\va x+x_\va),\ \tilde{\mathcal{P}}_{\varepsilon}(x)=\mathcal{P}_{\varepsilon}(\va x+x_{\varepsilon}).$$
By Lemma $\ref {lem5.1}$ (iii), we have
\begin{align}\label{ssa}
c_\La\va |x|\le|x_\va+\va x|\le C_\La\va|x|,\ \ x\in\rn\setminus\La_\va,
\end{align}
where $\La_\va:=\{x\mid x_\va+\va x\in\La\}$, $c_\La,C_\La>0$ are some constants independent of $x$ and $\va$.

We also define the set of test functions for the weak sub(super)-solutions outside a ball
\begin{align}
H_{c,R}^s(\rn):=\left\{\psi\in \Ds, \psi\ge 0\mid \mathrm{supp} \psi\ \mathrm{is\ compact},\ \psi=0\ \mathrm{in}\ B_R(0)\right\}.\nonumber
\end{align}

Next, we employ Lemma \r{lem5.1} (iv) to linearize the penalized problem.
\begin{proposition}\label{prop5.3}
Let $V$ satisfy ($\mathcal{V}$), $p\in (2,2_s^*)$ and  ($\mathcal{P}_1$)-($\mathcal{P}_2$) hold. Then there exist $R_0>0$ and $\va_{R}>0$ such that
for $R>R_0$ and $\va\in (0,\va_{R})$, $v_{\varepsilon}$ is a weak sub-solution to the following equation
\begin{equation}\label{q1t}
(-\Delta)^s v +\frac{1}{2}V_{\varepsilon}v= \tilde{\mathcal P}_\va v,\ x\in\R^N\setminus B_R(0),
\end{equation}
i.e.,
\begin{align*}
\iint_{\R^{2N}}\frac{(v_\va(x)-v_\va(y))(\var(x)-\var(y))}{|x-y|^{N+2s}}+\frac{1}{2}\int_{\rn}V_\va v_\va\var\le\int_{\rn}\tilde{\mathcal P}_\va v_\va\var,\quad \varphi\in H_{c,R}^s(\rn).
\end{align*}
\end{proposition}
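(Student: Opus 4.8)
The plan is to unwind the definition of $v_\va$ and show that, away from a large ball, the penalized nonlinearity on the right-hand side of \eq{qaj} is dominated by $\tilde{\mathcal P}_\va v_\va$ plus half the potential term. First I would recall that $v_\va(x)=u_\va(\va x+x_\va)$ solves, after the scaling $y\mapsto \va y+x_\va$, the rescaled penalized equation
\begin{equation*}
(-\Delta)^s v_\va+V_\va v_\va=\chi_{\La_\va}(x)v_{\va,+}^{p-1}+\chi_{\rn\setminus\La_\va}(x)g_\va(\va x+x_\va,v_\va)v_{\va,+}\quad\text{in }\rn,
\end{equation*}
in the weak sense against test functions in $\Ds$, where $g_\va(\va x+x_\va,t)=\min\{t_+^{p-2},\tilde{\mathcal P}_\va(x)\}\le\tilde{\mathcal P}_\va(x)$. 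By Lemma \r{lem5.1}(iii), $\dist(x_\va,\La^c)$ stays bounded below, so there is $R_0>0$ with $B_{R_0}(0)\supset\La_\va$ for all small $\va$ (after possibly enlarging $R_0$ using \eq{ssa}); hence for $R>R_0$ and $x\in\rn\setminus B_R(0)$ we are automatically in the region $\rn\setminus\La_\va$, where the nonlinearity is $g_\va(\va x+x_\va,v_\va)v_{\va,+}\le \tilde{\mathcal P}_\va v_{\va,+}$.

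Next I would kill the term $v_{\va,+}^{p-1}$ entirely on the far region using the decay of $g_\va$. The point is that on $\rn\setminus\La_\va$ the effective nonlinearity is $\min\{v_{\va,+}^{p-2},\tilde{\mathcal P}_\va\}\,v_{\va,+}$, which is bounded by $\tilde{\mathcal P}_\va v_{\va,+}$ unconditionally — no smallness of $v_\va$ is even needed here, since the minimum is $\le\tilde{\mathcal P}_\va$ pointwise. Then, testing the rescaled equation against $\varphi\in H_{c,R}^s(\rn)$ (which is a legitimate test function since $\varphi\in\Ds$, $\varphi\ge0$, compactly supported, and vanishes on $B_R(0)\supset\La_\va$), the $\La_\va$-term drops out and we are left with
\begin{equation*}
\iint_{\R^{2N}}\frac{(v_\va(x)-v_\va(y))(\varphi(x)-\varphi(y))}{|x-y|^{N+2s}}+\int_{\rn}V_\va v_\va\varphi
=\int_{\rn\setminus\La_\va}g_\va(\va x+x_\va,v_\va)v_{\va,+}\varphi\le\int_{\rn}\tilde{\mathcal P}_\va v_\va\varphi,
\end{equation*}
using $v_\va=v_{\va,+}\ge0$ (positivity of $u_\va$ from Lemma \r{lem5.1}). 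Since $V_\va\ge0$, discarding half of $\int V_\va v_\va\varphi$ only weakens the inequality, giving exactly the weak sub-solution inequality for \eq{q1t}. Actually one does not even need to discard half the potential for the inequality as stated; the factor $\frac12$ is kept presumably because it is what is needed as input to the comparison argument in the next section, and it follows a fortiori.

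The part requiring the most care — and where Lemma \r{lem5.1}(iv) genuinely enters — is not the formal manipulation above but making sure the choice of $R_0$ is uniform in $\va$: one must check that $\La_\va\subset B_{R_0}(0)$ for all $\va\in(0,\va_R)$, which uses both $\liminf_{\va\to0}\dist(x_\va,\La^c)>0$ (Lemma \r{lem5.1}(iii)) and the two-sided bound \eq{ssa} to control how $\La_\va$ sits relative to the origin, together with boundedness of $\La$. In fact Lemma \r{lem5.1}(iv) gives something stronger that is presumably exploited elsewhere — that $v_\va$ is small outside $B_R(0)$ — but for this particular statement, since the penalization already forces $g_\va\le\tilde{\mathcal P}_\va$, the only role of (iv) and (iii)/\eq{ssa} is to guarantee that the far region $\rn\setminus B_R(0)$ lies inside $\rn\setminus\La_\va$ so that the subcritical term $v_{\va,+}^{p-1}$ never appears. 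Once that containment is secured uniformly in $\va$, the rest is the short computation sketched above.
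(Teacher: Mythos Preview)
Your argument contains a genuine geometric error. You claim that $\La_\va\subset B_{R_0}(0)$ for all small $\va$, but the opposite is true: since $\La_\va=\{x:\va x+x_\va\in\La\}=\frac{\La-x_\va}{\va}$ and $\liminf_{\va\to0}\dist(x_\va,\La^c)>0$ by Lemma~\ref{lem5.1}(iii), the set $\La_\va$ contains a ball of radius $c/\va$ about the origin and hence \emph{expands} to all of $\rn$ as $\va\to0$. (The paper uses exactly this in Proposition~\ref{ed5}: ``$B_R(0)\subset\La_\va$ for small $\va$''.) Consequently, for any fixed $R$, the region $\rn\setminus B_R(0)$ meets $\La_\va$ in the nontrivial annulus $\La_\va\setminus B_R(0)$, on which the penalized equation carries the \emph{full} nonlinearity $v_{\va}^{p-1}$, not the truncated one. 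Your claim that ``the $\La_\va$-term drops out'' is therefore false, and the displayed identity following it does not hold.

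This is precisely why Lemma~\ref{lem5.1}(iv) is indispensable and why the factor $\tfrac12$ is not ``a fortiori'' but essential. The paper's proof uses (iv) to get $u_\va^{p-2}\le\tfrac12 V_0\le\tfrac12 V$ on $\La\setminus B_{\va R}(x_\va)$ (equivalently $v_\va^{p-2}\le\tfrac12 V_\va$ on $\La_\va\setminus B_R(0)$), so that the dangerous term $v_\va^{p-1}$ on this intermediate region is absorbed by \emph{half} of the potential term; the other half survives to give the stated linear inequality. On the outer region $\rn\setminus\La_\va$ one then uses $g_\va\le\tilde{\mathcal P}_\va$ as you did. Both pieces are needed; your proposal handles only the outer one.
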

\begin{proof}
By Lemma \r{lem5.1} (iv),  there exists $R_0>0$ and $\va_{R}>0$ such that
\begin{equation}\label{w20}
  u_\va^{p-2}\le \frac{1}{2} V_0\quad \mathrm{in}\ \La\backslash B_{\va R}(x_\va)
\end{equation}
for any $R>R_0$ and $0<\va<\va_{R}$.

Fix  $\varphi\in H_{c,R}^s(\rn)$. Denote $\varphi_{\varepsilon}(x):=\varphi (\frac{x - x_{\varepsilon}}{\varepsilon})$, then $\varphi_\va\in H_{V,\va}^s(\rn)$ and $\varphi_\va=0\ \mathrm{in}\ B_{\va R}(x_\va)$.
Recalling $g_\va(x,u_\va)\le\mathcal{P_\va}$, by \eq{w20} and $V_0\le V$ in $\La$, we have
\begin{equation}\label{w2o}
  \int_{\rn}(\chi_{\La}u_\va^{p-2}+\chi_{\La^c}g_\va(x,u_\va))u_\va\var_\va\le \frac{1}{2}\int_{\rn}V u_\va\var_\va+\int_{\rn}\mathcal{P_\va}u_\va\var_\va.
\end{equation}
Taking $\varphi_{\varepsilon}$ as a test function in  $\eq{qaj}$ for $u_\va$, it follows by \eq{w2o} that
\begin{align*}
 \va^{2s}\iint_{\R^{2N}}\frac{(u_\va(x)-u_\va(y))(\var_\va(x)-\var_\va(y))}{|x-y|^{N+2s}}+\frac{1}{2}\int_{\rn}V u_\va\var_\va\le
 \int_{\rn}\mathcal{P_\va}u_\va\var_\va.
\end{align*}
Then the conclusion holds immediately by rescaling.
\end{proof}

\begin{proposition}\label{5z}
Let $(\mathcal{P}_2)$ hold and $v\in \dot{H}^s(\rn)$ with
$\int_{\rn}V_\va v^2_+<\wq$.
If $v$ satisfies weakly
\begin{align}\label{n11}
(-\Delta)^s v +\frac{1}{2}V_{\varepsilon}v\leq \tilde{\mathcal P}_\va v\ \mathrm{in}\ \R^N\setminus B_R(0),
\end{align}
and $v\le0$ in $B_R(0)$, then $v\le0$ in $\rn$.
\end{proposition}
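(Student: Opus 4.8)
The statement is a weak maximum (comparison) principle for the operator $(-\Delta)^s + \tfrac12 V_\va$ with the subcriticality condition $(\mathcal P_2)$, so the natural strategy is to test the inequality against the positive part $v_+$ and absorb the right-hand side using $(\mathcal P_2)$. First I would record the elementary inequality
\begin{equation*}
\big(v(x)-v(y)\big)\big(v_+(x)-v_+(y)\big)\ge \big(v_+(x)-v_+(y)\big)^2,\qquad x,y\in\rn,
\end{equation*}
which holds pointwise (check the three cases according to the signs of $v(x),v(y)$), and hence
\begin{equation*}
\iint_{\R^{2N}}\frac{\big(v(x)-v(y)\big)\big(v_+(x)-v_+(y)\big)}{|x-y|^{N+2s}}\ge [v_+]_s^2 .
\end{equation*}
Since $v\le 0$ in $B_R(0)$, we have $v_+=0$ in $B_R(0)$, so $v_+$ is an admissible test function: it is nonnegative, lies in $\dot H^s(\rn)$, and vanishes on $B_R(0)$ — although it need not be compactly supported, one first tests against $\eta_n v_+$ for a cut-off $\eta_n$ increasing to $1$ and then passes to the limit, using $v_+\in \dot H^s(\rn)$ and $\int_{\rn}V_\va v_+^2<\infty$ to justify the limits in each term (the tail of the bilinear form is controlled by $[v_+]_s^2$ plus the usual cross-terms that vanish as $n\to\infty$). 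This is the one genuinely technical point: making the density/approximation argument rigorous so that $v_+$ itself may be used in \eqref{n11}, since the test class $H_{c,R}^s(\rn)$ in the hypothesis only contains compactly supported functions.

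Once $v_+$ is legitimate as a test function, plugging it into \eqref{n11} gives
\begin{equation*}
[v_+]_s^2+\frac12\int_{\rn}V_\va v_+^2\le \iint_{\R^{2N}}\frac{\big(v(x)-v(y)\big)\big(v_+(x)-v_+(y)\big)}{|x-y|^{N+2s}}+\frac12\int_{\rn}V_\va v\,v_+\le \int_{\rn}\tilde{\mathcal P}_\va v\,v_+=\int_{\rn}\tilde{\mathcal P}_\va v_+^2,
\end{equation*}
where I used $v\,v_+=v_+^2$ and, for the gradient term, the pointwise inequality above applied after noting the integrand on $B_R\times B_R$ vanishes. Now invoke $(\mathcal P_2)$ in the rescaled form $\int_{\rn}\tilde{\mathcal P}_\va w^2\le \kappa\big([w]_s^2+\int_{\rn}V_\va w^2\big)$ with $w=v_+$ and $\kappa\in(0,\tfrac12)$; this yields
\begin{equation*}
[v_+]_s^2+\frac12\int_{\rn}V_\va v_+^2\le \kappa\Big([v_+]_s^2+\int_{\rn}V_\va v_+^2\Big)\le \kappa [v_+]_s^2+\frac12\int_{\rn}V_\va v_+^2,
\end{equation*}
so $(1-\kappa)[v_+]_s^2\le 0$, forcing $[v_+]_s=0$. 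Since $v_+\in\dot H^s(\rn)$ and $\dot H^s(\rn)\hookrightarrow L^{2_s^*}(\rn)$, a function with zero Gagliardo seminorm is zero a.e.; hence $v_+\equiv 0$, i.e. $v\le 0$ in $\rn$, as claimed. (One must take care that $(\mathcal P_2)$ is stated for the unscaled problem; after the change of variables $x\mapsto \va x+x_\va$ it transforms into the displayed inequality for $[\,\cdot\,]_s$ and $V_\va$ with the same constant $\kappa$, so nothing is lost.)

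The main obstacle, as indicated, is purely the functional-analytic bookkeeping of the approximation step: verifying that $\eta_n v_+\to v_+$ in $\dot H^s(\rn)$ and that the bilinear form and the potential/penalization integrals converge, all under only the integrability hypotheses $v\in\dot H^s(\rn)$ and $\int_{\rn}V_\va v_+^2<\infty$. Everything after that is the standard Lax–Milgram-type coercivity argument made possible by the smallness $\kappa<\tfrac12$.
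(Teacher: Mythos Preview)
Your proposal is correct and follows essentially the same route as the paper: approximate $v_+$ by $\eta_n v_+$ with cut-offs to obtain admissible test functions in $H^s_{c,R}(\rn)$, pass to the limit (the paper cites \cite[Lemma~5]{pap} for $\eta_n v_+\to v_+$ in $\dot H^s(\rn)$ and uses Fatou for the $V_\va$-term), use the pointwise inequality $(v(x)-v(y))(v_+(x)-v_+(y))\ge (v_+(x)-v_+(y))^2$ (equivalently, the paper writes $(-v_-(x)+v_-(y))(\varphi_n(x)-\varphi_n(y))\ge 0$), and then absorb the penalization term via the rescaled $(\mathcal P_2)$ with $\kappa<\tfrac12$ to conclude $v_+\equiv 0$.
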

\begin{proof}
Clearly, $v_+=0$ in $B_R(0)$ and $v_+\in \dot{H}^s(\rn)$.
 Let $\var_n=\eta(\frac{x}{n})v_+$ where $\eta\in C_c^\wq(\rn,[0,1])$ satisfying
$\eta\equiv1$ in $B_R(0)$ and $\mathrm{supp}\eta\subset B_{2R}(0)$, then $\{\var_n\}_{n\ge1}\subset H_{c,R}^s(\rn)$. Moreover, by \cite[Lemma 5]{pap}, $\var_n\to v_+$ in $\dot{H}^s(\rn)$ as $n\to\wq$.

Taking $\var_n$ as a test function into $\eq{n11}$, we see that
\begin{align}\label{wbj}
&\iint_{\R^{2N}}\frac{(v_+(x)-v_+(y))(\var_n(x)-\var_n(y))}{|x-y|^{N+2s}}+\frac{1}{2}\int_{\rn}V_\va v_+\var_n\le \int_{\rn}\tilde{\mathcal P}_\va v_+\var_n,
\end{align}
where we use that $(-v_-(x)+v_-(y))(\var_n(x)-\var_n(y))\ge0$.

Since $\var_n\to v_+$ in $\dot{H}^s(\rn)$ as $n\to\wq$, it follows that
\begin{align}\label{24j}
  \lim_{n\to\wq}\iint_{\R^{2N}}\frac{(v_+(x)-v_+(y))(\var_n(x)-\var_n(y))}{|x-y|^{N+2s}}=
  \iint_{\R^{2N}}\frac{|v_+(x)-v_+(y)|^2}{|x-y|^{N+2s}}.
\end{align}
Clearly, since $\var_n\le v_+$, it holds $\int_{\rn}\tilde{\mathcal P}_\va v_+\var_n\le \int_{\rn}\tilde{\mathcal P}_\va v_+^2$.
Moreover, by Fatou's Lemma,
\begin{align}\label{25j}
 \int_{\rn}V_\va v_+^2\le\liminf_{n\to\wq} \int_{\rn}V_\va v_+\var_n.
\end{align}
Therefore,  letting $n\to\wq$ in \eq{wbj}, we get
\begin{align}\label{p02}
\iint_{\R^{2N}}\frac{|v_+(x)-v_+(y)|^2}{|x-y|^{N+2s}}+\frac{1}{2}\int_{\rn}V_\va v_+^2\le& \int_{\rn}\tilde{\mathcal P}_\va v_+^2.
\end{align}
Denote $\tilde{v}(x):=v_+(\frac{x-x_\va}{\va})$. Since $v_+\in \dot{H}^s(\rn)$ with $\int_{\rn}V_\va v_+^2<\wq$, we have $\tilde{v}\in H_{V,\va}^s(\rn)$. Then $\tilde{v}$ satisfies ($\mathcal{P}_2$).
It follows by rescaling that
\begin{align}\label{q2o}
   \int_{\mathbb{R}^{N}}\tilde{\mathcal{P}}_{\varepsilon}v_+^2 \leq \kappa\Big( \iint_{\R^{2N}}\frac{|v_+(x)-v_+(y)|^2}{|x-y|^{N+2s}}+\int_{\rn}V_\va v_+^2\Big).
\end{align}
Since $\kappa<\frac{1}{2}$, \eq{p02} and \eq{q2o} imply that $v_+=0$ in $\rn$, which completes the proof.
\end{proof}

Now we intend to find the super-solutions for the linear problem $\eq{q1t}$. Afterwards, we choose the sup-solutions of the form
\begin{align}\label{sn}
w_\mu=\frac{1}{(1+|x|^2)^\frac{\mu}{2}}
\end{align}
by adjusting the parameter $\mu>0$. Clearly, $w_\mu\in C^{\beta,\a}(\rn)$ for any $\beta>0$ and $\a\in(0,1)$. Specially, $\Fs w_\mu$ is well-defined pointwise.

 We  estimate  the nonlocal term $\Fs w_\mu$, whose proof is  technique and tedious. To keep the main clue of the paper clear, we
 postpone the proof  to Appendix \r{sok}.
\begin{proposition}\label{tb}
Let $\mu\in(0,+\infty)$. There exists constants $R_\mu, C_\mu, \tilde{C}_\mu>0$ depending  on $\mu$, $N$ and $s$ such that
\begin{align}
\left\{
  \begin{array}{ll}
    0<C_\mu\frac{1}{|x|^{\mu+2s}}\le\Fs w_\mu\le3C_\mu\frac{1}{|x|^{\mu+2s}}, & \mathrm{if }\ |x|>R_\mu\ \mathrm{and}\ \mu\in(0,N-2s); \vspace{1mm}\\
    \Fs w_\mu=C_{N-2s}w_\mu^{2_s^*-1},\ x\in \rn,& \mathrm{if }\ \mu=N-2s;\vspace{1mm}\\
-3C_\mu\frac{1}{|x|^{\mu+2s}}\le\Fs w_\mu\le -C_\mu\frac{1}{|x|^{\mu+2s}}<0,& \mathrm{if }\ |x|>R_\mu\ \mathrm{and}\ \mu\in(N-2s,N);\vspace{1mm}\\
   -\frac{\tilde{C}_{N}\ln|x|}{|x|^{N+2s}}\le\Fs w_\mu\le-\frac{C_{N}\ln|x|}{|x|^{N+2s}}<0,& \mathrm{if }\ |x|>R_\mu\ \mathrm{and}\ \mu=N,\vspace{1mm}\\
   -\frac{\tilde{C}_\mu}{|x|^{N+2s}}\le\Fs w_\mu\le -\frac{C_\mu}{|x|^{N+2s}}<0, & \mathrm{if }\ |x|>R_\mu\ \mathrm{and}\ \mu>N.
\end{array}
\right.\nonumber
\end{align}
\end{proposition}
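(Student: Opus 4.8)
The plan is to estimate $\Fs w_\mu(x)$ for large $|x|$ by splitting the singular integral
$$
\Fs w_\mu(x)=2\,\mathrm{P.V.}\int_{\rn}\frac{w_\mu(x)-w_\mu(y)}{|x-y|^{N+2s}}\,\d y
$$
into three regions depending on a large fixed $|x|$: the near-diagonal region $B_{|x|/2}(x)$, the region near the origin $B_{|x|/2}(0)$ where $w_\mu$ is of order $1$, and the far region $\rn\setminus\big(B_{|x|/2}(x)\cup B_{|x|/2}(0)\big)$. The case $\mu=N-2s$ is immediate and handled separately: since $(1+|x|^2)^{-(N-2s)/2}$ is (a multiple of) the Aubin--Talenti bubble, it solves $\Fs U=C_{N-2s}U^{2_s^*-1}$ on all of $\rn$, so that line is just a citation to the known extremal identity. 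For the remaining cases the main point is to identify which of the three regions dominates and with what sign.

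First I would rescale: write $x=Re$ with $R=|x|$ large and $|e|=1$, change variables $y=Rz$, and reduce to studying, up to the factor $R^{-\mu-2s}$, the quantity
$$
\int_{\rn}\frac{R^\mu w_\mu(Re)-R^\mu w_\mu(Rz)}{|e-z|^{N+2s}}\,\d z,
$$
and note $R^\mu w_\mu(Rz)\to |z|^{-\mu}$ uniformly on compact sets away from $0$, while near $z=0$ one has $R^\mu w_\mu(Rz)\le \min\{R^\mu,|z|^{-\mu}\}$. The near-diagonal contribution is controlled by the $C^2$ (indeed $C^\infty$) regularity of $w_\mu$: on $B_{R/2}(Re)$ one has $|D^2 w_\mu|\lesssim R^{-\mu-2}$, so a second-order Taylor expansion gives a contribution of order $R^{-\mu-2s}$ times a harmless constant, with the P.V.\ symmetrization killing the first-order term. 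The region near the origin contributes $\int_{B_{|x|/2}(0)}\frac{w_\mu(x)-w_\mu(y)}{|x-y|^{N+2s}}\,\d y$, and since $w_\mu(x)\approx |x|^{-\mu}$ there while $|x-y|\approx|x|$, this is $\approx |x|^{-N-2s}\big(c_1|x|^{-\mu}\cdot|x|^N-c_2\int_{B_{|x|/2}(0)}w_\mu\big)$; the integral $\int_{B_{|x|/2}(0)}w_\mu$ is the decisive object — it is bounded when $\mu>N$, grows like $\ln|x|$ when $\mu=N$, and grows like $|x|^{N-\mu}$ when $\mu<N$. Finally the far region, where both $|x-y|\gtrsim|x|$ and $w_\mu(y)\lesssim|x|^{-\mu}$ (so $|w_\mu(x)-w_\mu(y)|\lesssim|x|^{-\mu}$ on the part comparable to $|x|$, with the tail where $|y|\gg|x|$ giving a convergent and smaller piece), contributes at most $O(|x|^{-\mu-2s})$ by the same scaling count.

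Assembling: for $\mu\in(0,N-2s)$ the dominant balance is $|x|^{-\mu}\cdot|x|^{N}$ from the positive part at the origin against $|x|^{N-\mu}$ from $\int_{B_{|x|/2}(0)}w_\mu$, which is a genuine $O(|x|^{-\mu-2s}\cdot|x|^{N-\mu}\cdot|x|^{-N})$... — more carefully, one checks the coefficient $c_1-c_2\kappa(\mu)$ where $\kappa(\mu)=\lim_{R\to\infty}R^{\mu-N}\int_{B_{R/2}}w_\mu$ has the right sign, and this is exactly where the three regimes $\mu<N-2s$, $N-2s<\mu<N$, and the borderline splitting occur: the sign of $\Fs w_\mu$ flips because for small $\mu$ the value $w_\mu(x)$ at the point dominates (giving positivity), while for $\mu$ close to $N$ the mass of $w_\mu$ near the origin dominates (giving negativity), and the crossover is precisely the critical exponent $\mu=N-2s$. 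For $\mu=N$ the origin term is $\approx -c\,|x|^{-\mu}\ln|x|\cdot|x|^{N}$ which after dividing by $|x|^{N+2s}$ yields the $-\ln|x|/|x|^{N+2s}$ rate; for $\mu>N$ all three pieces are $O(|x|^{-N-2s})$ with the origin term negative and dominant, giving the last line. The hard part is the bookkeeping in the $\mu<N$ cases: one must show the subleading (near-diagonal and far) contributions are strictly smaller than the leading origin term in absolute value for $|x|>R_\mu$, so that the sign and the two-sided bounds $C_\mu\le \cdot\le 3C_\mu$ (resp.\ the $\ln$ and polynomial analogues) survive — this is the delicate nonlocal analysis that, as the authors note, is deferred to Appendix~\ref{sok}.
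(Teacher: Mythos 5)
Your decomposition into near-diagonal, near-origin, and far regions is a reasonable starting point and genuinely different from the paper's strategy, but the central step---determining the \emph{sign} of $\Fs w_\mu$ and locating the transition at $\mu=N-2s$---is not resolved by the argument you sketch, and the specific sign claims you make are incorrect. You assert that ``for small $\mu$ the value $w_\mu(x)$ at the point dominates (giving positivity), while for $\mu$ close to $N$ the mass of $w_\mu$ near the origin dominates (giving negativity).'' But the near-origin contribution
\[
\int_{B_{|x|/2}(0)}\frac{w_\mu(x)-w_\mu(y)}{|x-y|^{N+2s}}\,\d y
\]
is \emph{negative} for every $\mu>0$ (since $w_\mu(y)>w_\mu(x)$ on $B_{|x|/2}(0)$), and the two terms you propose to balance, $w_\mu(x)\,|B_{|x|/2}(0)|$ and $\int_{B_{|x|/2}(0)}w_\mu$, are of the same order $|x|^{N-\mu}$ for all $\mu<N$ with the latter strictly larger---so this piece cannot be the source of positivity when $\mu<N-2s$. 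Moreover, the near-diagonal and far regions each contribute constants of the \emph{same} order $|x|^{-\mu-2s}$ as the quantity you are trying to estimate, so they are not ``harmless'' lower-order errors: the final sign is a delicate cancellation among all three regions, and your bookkeeping does not control it.

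The paper sidesteps this entirely. It first treats the exactly homogeneous $h_\mu=|x|^{-\mu}$, rescales by $|x|$ to pull out the factor $|x|^{-\mu-2s}$, splits the resulting integral at $|y|=1$, and applies the Kelvin inversion $y\mapsto y/|y|^2$ to the inner piece. After taking the principal value, the two pieces recombine into the single integral
\[
A_\mu=\int_{\rn\setminus B_1(0)}\frac{|y|^\mu-1}{|y-\vec e_1|^{N+2s}}\Big(\frac{1}{|y|^\mu}-\frac{1}{|y|^{N-2s}}\Big)\,\d y,
\]
whose integrand has a \emph{manifest} sign on $|y|>1$: positive iff $\mu<N-2s$, zero at $\mu=N-2s$, negative for $N-2s<\mu<N$, and divergent to $-\infty$ for $\mu\ge N$. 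The estimate for $w_\mu$ in the range $\mu\in(0,N)\setminus\{N-2s\}$ then follows by showing $|x|^{\mu+2s}|\Fs w_\mu-\Fs h_\mu|\to 0$ as $|x|\to\infty$, while for $\mu\ge N$ a separate splitting near the origin produces the $\ln|x|$ and $O(1)$ factors. The inversion step is the key mechanism you are missing; without it (or some equivalent device) your three-region count cannot by itself pin down the sign, and the proof as proposed does not go through.
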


Moreover, we give some important integral properties of $w_\mu$.
\begin{proposition}\label{cc5} There hold:

(i) $w_\mu\in \dot{H}^s(\rn)$ for $\mu>\frac{N-2s}{2}$ and $w_\mu\notin \dot{H}^s(\rn)$ for $0<\mu\le\frac{N-2s}{2}$;

(ii) For any $\mu>0$ and $\phi\in C_c^\wq(\rn)$,
\begin{align*}%\label{qr9}
  \iint_{\R^{2N}}\frac{(w_\mu(x)-w_\mu(y))(\phi(x)-\phi(y))}{|x-y|^{N+2s}}=\int_{\rn}(-\De)^{s}w_\mu\phi;
  \end{align*}

(iii) For any $\mu>\frac{N-2s}{2}$ and $\phi\in \dot{H}^s(\rn)$,
\begin{align*}%\label{qr9}
  \iint_{\R^{2N}}\frac{(w_\mu(x)-w_\mu(y))(\phi(x)-\phi(y))}{|x-y|^{N+2s}}=\int_{\rn}(-\De)^{s}w_\mu\phi.
\end{align*}
\end{proposition}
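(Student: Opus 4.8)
\emph{Plan, and item (i).} I would prove the three assertions in order: (i) is a direct computation on $w_\mu$, (ii) passes from the pointwise value of $\Fs w_\mu$ to the weak (bilinear) form, and (iii) follows from (ii) by density once both sides are seen to be bounded linear functionals on $\dot H^s(\rn)$. For (i), recall that $u\in\dot H^s(\rn)$ demands \emph{both} $u\in L^{2_s^*}(\rn)$ and $[u]_s<\infty$. Since $w_\mu(x)$ is comparable to $(1+|x|)^{-\mu}$, we have $\int_{\rn}w_\mu^{2_s^*}<\infty$ iff $\mu\,2_s^*>N$, i.e.\ iff $\mu>\frac{N-2s}{2}$; hence for $0<\mu\le\frac{N-2s}{2}$ already $w_\mu\notin L^{2_s^*}(\rn)$, so $w_\mu\notin\dot H^s(\rn)$, which is the negative half. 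For $\mu>\frac{N-2s}{2}$ I would estimate $[w_\mu]_s^2=\iint_{\R^{2N}}|x-y|^{-N-2s}|w_\mu(x)-w_\mu(y)|^2$ by splitting $\R^{2N}$ into the near-diagonal set $D=\{|x-y|\le\frac14(1+|x|+|y|)\}$ and its complement $E$. On $D$, $1+|z|$ is comparable to $1+|x|$ for every $z$ on the segment $[x,y]$, so the mean value theorem together with $|\nabla w_\mu(z)|\lesssim(1+|z|)^{-\mu-1}$ gives $|w_\mu(x)-w_\mu(y)|\lesssim|x-y|(1+|x|)^{-\mu-1}$; integrating out the $y$-variable over $|x-y|\lesssim 1+|x|$ contributes $(1+|x|)^{2-2s}$, and one is left with $\int_{\rn}(1+|x|)^{-2\mu-2s}\,dx<\infty$ because $2\mu+2s>N$. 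On $E$ one has $|x-y|>\frac14(1+|x|)$ and $|x-y|>\frac14(1+|y|)$, so from $|w_\mu(x)-w_\mu(y)|^2\le 2w_\mu(x)^2+2w_\mu(y)^2$ and $\int_{|x-y|>\frac14(1+|y|)}|x-y|^{-N-2s}\,dx\lesssim(1+|y|)^{-2s}$ one is again reduced to $\int_{\rn}(1+|y|)^{-2\mu-2s}\,dy<\infty$. (Alternatively, granting $\Fs w_\mu\in L^{(2_s^*)'}(\rn)$ established in item (iii), one may use $\fs w_\mu=(-\Delta)^{-s/2}\big(\Fs w_\mu\big)$, the Hardy--Littlewood--Sobolev bound $(-\Delta)^{-s/2}\colon L^{(2_s^*)'}(\rn)\to L^2(\rn)$, and $[w_\mu]_s=c_{N,s}\|\fs w_\mu\|_{L^2(\rn)}$.)

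\emph{Item (ii).} For every $\mu>0$, $w_\mu\in C^\infty(\rn)$ with bounded derivatives and $\int_{\rn}|w_\mu(x)|(1+|x|)^{-N-2s}\,dx<\infty$, so (as already observed) $\Fs w_\mu$ is well defined pointwise, locally bounded, and $\int_{|x-y|>\delta}|x-y|^{-N-2s}\big(w_\mu(x)-w_\mu(y)\big)\,dy\to\frac12\Fs w_\mu(x)$ as $\delta\to 0^+$, uniformly on compact sets. Given $\phi\in C_c^\infty(\rn)$, the integrand $|x-y|^{-N-2s}|w_\mu(x)-w_\mu(y)|\,|\phi(x)-\phi(y)|$ is integrable on $\R^{2N}$ (near the diagonal bound both differences by a constant times $|x-y|$, away from it use that $\phi$ has compact support), so by dominated convergence the bilinear form equals $\lim_{\delta\to0}\iint_{|x-y|>\delta}$; symmetrizing in $(x,y)$ rewrites this as $2\lim_{\delta\to0}\int_{\rn}\phi(x)\Big(\int_{|x-y|>\delta}|x-y|^{-N-2s}\big(w_\mu(x)-w_\mu(y)\big)\,dy\Big)\,dx$, and the uniform inner convergence plus the compact support of $\phi$ yields $\int_{\rn}\Fs w_\mu\,\phi$, which is (ii).

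\emph{Item (iii).} Fix $\mu>\frac{N-2s}{2}$. By (i), $[w_\mu]_s<\infty$, so Cauchy--Schwarz in the measure $|x-y|^{-N-2s}\,dx\,dy$ shows that $\phi\mapsto\iint_{\R^{2N}}|x-y|^{-N-2s}\big(w_\mu(x)-w_\mu(y)\big)\big(\phi(x)-\phi(y)\big)$ is a bounded linear functional on $\dot H^s(\rn)$ (of norm $\le[w_\mu]_s$). On the other hand, Proposition~\ref{tb} gives $|\Fs w_\mu(x)|\lesssim(1+|x|)^{-\mu-2s}$ for $\mu\in(0,N-2s)$ and $|\Fs w_\mu(x)|\lesssim(1+|x|)^{-N-2s}\log(2+|x|)$ for $\mu\ge N-2s$; together with local boundedness this yields $\Fs w_\mu\in L^{(2_s^*)'}(\rn)$ with $(2_s^*)'=\frac{2N}{N+2s}$, precisely because $\mu>\frac{N-2s}{2}$ (the endpoint cases $\mu=N-2s$ and $\mu=N$ need a separate, easy check). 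Hence, by Hölder and the embedding $\dot H^s(\rn)\hookrightarrow L^{2_s^*}(\rn)$ from Proposition~\ref{prop2.1}, $\phi\mapsto\int_{\rn}\Fs w_\mu\,\phi$ is also a bounded linear functional on $\dot H^s(\rn)$. Since the two functionals coincide on $C_c^\infty(\rn)$ by (ii), and $C_c^\infty(\rn)$ is dense in $\dot H^s(\rn)$ (standard for $N>2s$), they coincide on all of $\dot H^s(\rn)$, which is (iii).

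\emph{Expected obstacle.} Most of the argument is bookkeeping; the only step needing real care is the positive half of (i), where the off-diagonal contribution to $[w_\mu]_s^2$ must be estimated keeping the separation $|x-y|$ (which satisfies $|x-y|\gtrsim 1+|x|+|y|$ on $E$) in the bound rather than simply controlling $\int|x-y|^{-N-2s}\,dx$ by a constant, since the latter would only yield the suboptimal threshold $\mu>\frac N2$. Beyond that, item (iii) rests entirely on the decay of $\Fs w_\mu$ supplied by Proposition~\ref{tb}, which is the real analytic input behind the proposition.
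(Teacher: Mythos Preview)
Your proposal is correct and follows essentially the same route as the paper: the same $L^{2_s^*}$ obstruction for the negative half of (i), the same near/far-diagonal splitting with the mean value theorem for the positive half (the paper symmetrizes to $|x|>|y|$ and splits at $|x-y|\le|x|/2$, which is equivalent to your $D/E$ decomposition), the same symmetrize-and-pass-to-the-limit argument for (ii), and the same density argument for (iii) using $(-\Delta)^s w_\mu\in L^{2N/(N+2s)}(\rn)$ from Proposition~\ref{tb}. One small imprecision: your claimed bound $|\Fs w_\mu(x)|\lesssim(1+|x|)^{-N-2s}\log(2+|x|)$ for $\mu\ge N-2s$ is too strong when $\mu\in(N-2s,N)$, where Proposition~\ref{tb} only gives $|x|^{-\mu-2s}$; but since $\mu>N-2s>\frac{N-2s}{2}$ still forces $(\mu+2s)\cdot\frac{2N}{N+2s}>N$, the $L^{(2_s^*)'}$ conclusion is unaffected.
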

\begin{proof}
We first consider (i). Clearly, $\int_{\rn}w_\mu^{2_s^*}=+\wq$ for $\mu\le\frac{N-2s}{2}$, and $\int_{\rn}w_\mu^{2_s^*}<\wq$ for $\mu>\frac{N-2s}{2}$. On the other hand, by symmetry,
\begin{align}\label{agj}
  \int_{\rn}\int_{\rn}\frac{|w_\mu(x)-w_\mu(y)|^2}{|x-y|^{N+2s}}dxdy
  =&2\int\int_{\{|x|>|y|\}}\frac{|w_\mu(x)-w_\mu(y)|^2}{|x-y|^{N+2s}}dxdy\nonumber\\
  =&2\int\int_{\{|x|>|y|\}\cap\{|x-y|\le|x|/2\}}\frac{|w_\mu(x)-w_\mu(y)|^2}{|x-y|^{N+2s}}dxdy\nonumber\\
  &+2\int\int_{\{|x|>|y|\}\cap\{|x-y|>|x|/2\}}\frac{|w_\mu(x)-w_\mu(y)|^2}{|x-y|^{N+2s}}dxdy\nonumber\\
  :=&2I_1+2I_2.
\end{align}
Note that $|\nabla w_\mu(z)|\le C|\nabla w_\mu(x)|$ for all
$\frac{|x|}{2}\le|z|\le|x|$. Then by mean value theorem,
\begin{align*}
  I_1\le& C\int\int_{\{|x|>|y|\}\cap\{|x-y|\le|x|/2\}}\frac{|\nabla w_\mu(x)|^2}{|x-y|^{N+2s-2}}dxdy\\
  \le&C\int_{\rn}|\nabla w_\mu(x)|^2\Big(\int_{\{|x-y|\le|x|/2\}}\frac{1}{|x-y|^{N+2s-2}}dy\Big)dx\\
  =&C\int_{\rn}|\nabla w_\mu(x)|^2|x|^{2-2s}dx= C\mu^2\int_{\rn}\frac{|x|^{4-2s}}{(1+|x|^2)^{\mu+2}}dx<\wq
\end{align*}
provided that $\mu>\frac{N-2s}{2}$. Clearly, $|w_\mu(x)-w_\mu(y)|^2< |w_\mu(y)|^2$ for $|x|>|y|$, then
\begin{align}\label{sdf}
 I_2\le&\int\int_{\{|x|>|y|\}\cap\{|x-y|>|y|/2\}}\frac{|w_\mu(y)|^2}{|x-y|^{N+2s}}dxdy\nonumber\\
 \le&\int_{\rn}|w_\mu(y)|^2\left(\int_{\{|x-y|>|y|/2\}}\frac{1}{|x-y|^{N+2s}}dx\right)dy\nonumber\\
 =&C\int_{\rn}(1+|y|^2)^{-\mu}|y|^{-2s}dy<\wq
\end{align}
provided $\mu>\frac{N-2s}{2}$. Thus (i) is proved.

Next we consider (ii).
Let $\mu>0$. For any $x\in\rn$,  by symmetry, we get
\begin{align*}
\Fs w_\mu=&2\int_{\rn}\frac{w_\mu(x)-w_\mu(y)+\chi_{B_1(x)}(y)\nabla w_\mu(x)\cdot(x-y)}{|x-y|^{N+2s}}d y.
\end{align*}
Since $w_\mu\le1$ and $D^2w_\mu\in L^\wq(\rn)$, by Taylor expansion, we get
\begin{align}\label{e4o}
&\int_{\rn}\frac{|w_\mu(x)-w_\mu(y)+\chi_{B_1(x)}(y)\nabla w_\mu(x)\cdot(x-y)|}{|x-y|^{N+2s}}d y\nonumber\\
  \le&\int_{\rn\setminus B_1(x)}\frac{2}{|x-y|^{N+2s}}d y+\int_{ B_1(x)}\frac{C|x-y|^2}{|x-y|^{N+2s}}d y\le C,
\end{align}
which also implies $\Fs w_\mu\in L^\wq(\rn)$.

Fix any $\varphi\in C_c^\wq(\rn)$ and any $r\in(0,1)$. By symmetry and Fubini Theorem, we have
\begin{align*}
 &\int_{\rn}\Big(2\int_{\mathbb{R}^N\backslash B_r(x)}\frac{w_\mu(x)-w_\mu(y)+\chi_{B_1(x)}(y)\nabla w_\mu(x)\cdot(x-y)}{|x-y|^{N+2s}}~\d y\Big)\varphi(x)\d x\\
  =&\int_{\rn}\Big(2\int_{\mathbb{R}^N\backslash B_r(x)}\frac{w_\mu(x)-w_\mu(y)}{|x-y|^{N+2s}}~\d y\Big)\varphi(x)\d x\\
  =&\int_{\rn}\Big(\int_{\mathbb{R}^N\backslash B_r(x)}\frac{w_\mu(x)-w_\mu(y)}{|x-y|^{N+2s}}~\d y\Big)\varphi(x)\d x+\int_{\rn}\Big(\int_{\mathbb{R}^N\backslash B_r(y)}\frac{w_\mu(y)-w_\mu(x)}{|y-x|^{N+2s}}~\d x\Big)\varphi(y)\d y\\
  =&\iint_{|x-y|\ge r}\frac{\big(w_\mu(x)-w_\mu(y)\big)\big(\varphi(x)-\varphi(y)\big)}{|x-y|^{N+2s}}\d x\d y.
\end{align*}
Letting $r\to0$, by \eq{e4o} and Dominated convergence theorem, we get
\begin{align*}%\label{qr9}
 \int_{\rn}(-\De)^{s}w_\mu\var=\iint_{\R^{2N}}\frac{(w_\mu(x)-w_\mu(y))(\var(x)-\var(y))}{|x-y|^{N+2s}},
  \end{align*}
which implies (ii) holds.

Finally, we prove (iii). Letting $\mu>\frac{N-2s}{2}$, we have $w_\mu\in \dot{H}^s(\rn)$ by (i), and $(-\De)^sw_\mu\in L^{\frac{2N}{N+2s}}(\rn)\subset (\dot{H}^s(\rn))^{-1}$ by Proposition \r{tb}. As a result, (iii) holds immediately by (ii) and
a density argument since $C_c^\wq(\rn)$ is dense in $\dot{H}^s(\rn)$.
\end{proof}

From now on,  we  assume the prescribed form of the penalization:
\begin{align}\label{ph}
\P(x)=\frac{\va^\theta}{|x|^\tau}\chi_{\La^c},
\end{align}
where $\theta, \tau>0$ are two parameters which will be determined later.

\begin{proposition}\label{ed5}(Construction of super-solutions)
Assume that one of the following four conditions holds:

($\mathcal{A}_1$) $\theta>\tau$, $\tau>2s$ and $\mu\in (0, N-2s)$;

($\mathcal{A}_2$) $\theta-\tau>-2s$,  $\tau> 2s$ and $\mu\in (N-2s,N)$ when $V$ satisfies $(\mathcal{V}_{lsd})$ with $\om=2s$;

($\mathcal{A}_3$) $\theta-\tau>-\om$,  $\tau> \om$ and $\mu\in (N, N+2s-\om)$ when $V$ satisfies $(\mathcal{V}_{lsd})$ with $\om\in[0,2s)$;

($\mathcal{A}_4$) $\theta>\tau> 0$ and $\mu\in (N, N+2s)$ when $V$ satisfies $(\mathcal{V}_{log})$.
%Assume that $V$ satisfies $\inf_{\rn}V(1+|x|^{2s})\ge0$ \textrm{or} $\inf_{\rn}V(1+|x|^{\omega})>0$ for some $\omega\in(0,2s]$.

\noindent Then $w_\mu$ is a super-solution of (\ref {q1t}) in the classical sense, i.e.
\begin{align}\label{wdd}
\Fs w_\mu+\frac{1}{2}V_\va w_\mu\ge\tilde{\mathcal P}_\va w_\mu,\ x\in\R^N\setminus B_R(0)
\end{align}
for given $R>0$ large enough and $\va>0$ small enough.
\end{proposition}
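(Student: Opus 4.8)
The plan is to verify the pointwise inequality \eqref{wdd} for $|x|>R$, the three ingredients being Proposition \ref{tb} (sign and size of $(-\Delta)^sw_\mu$), a lower bound for $V_\va$, and an upper bound for $\tilde{\mathcal P}_\va$. For the penalization: since $0\in\Lambda$ is interior, fix $\rho>0$ with $B_\rho(0)\subset\Lambda$; as $\mathcal P_\va=\va^\theta|x|^{-\tau}\chi_{\La^c}$ and, by \eqref{ssa}, $|\va x+x_\va|\ge c_\La\va|x|$ whenever $\va x+x_\va\notin\Lambda$, one gets for every $x$
\[
\tilde{\mathcal P}_\va(x)\le\min\bigl\{\rho^{-\tau}\va^\theta,\ c_\La^{-\tau}\va^{\theta-\tau}|x|^{-\tau}\bigr\},\qquad \tilde{\mathcal P}_\va\equiv0\ \text{on}\ \La_\va.
\]
For $w_\mu$, use the elementary $2^{-\mu/2}|x|^{-\mu}\le w_\mu(x)\le|x|^{-\mu}$ for $|x|\ge1$. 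For $V_\va$, under the relevant decay hypothesis $V(y)\ge a(1+|y|^\omega)^{-1}$ (resp.\ $V(y)\ge b\,(\log(e+|y|^2))^{-1}$) for all $y$; combined with $|\va x+x_\va|\le C_\La\va|x|$ from \eqref{ssa} this yields a uniform lower bound $V_\va(x)\ge c_*>0$ on $\{\va|x|\le1\}$ (where $\va x+x_\va$ stays in a fixed bounded set) and a decay bound $V_\va(x)\ge c'\va^{-\omega}|x|^{-\omega}$ (resp.\ $\ge c'(\log(e+\va^2|x|^2))^{-1}$) on $\{\va|x|>1\}$, with $c_*,c'>0$ independent of $\va$.

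Case $(\mathcal A_1)$, $\mu\in(0,N-2s)$, is immediate and needs no splitting of the domain: Proposition \ref{tb} gives $(-\Delta)^sw_\mu(x)\ge C_\mu|x|^{-\mu-2s}>0$ for $|x|>R_\mu$, so, discarding $\tfrac12V_\va w_\mu\ge0$, it suffices to check $C_\mu c_\La^\tau|x|^{\tau-2s}\ge\va^{\theta-\tau}$, which holds as soon as $R$ is large (using $\tau>2s$) and $\va\le1$ (using $\theta>\tau$).

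For $(\mathcal A_2)$--$(\mathcal A_4)$ one has $\mu>N-2s$, hence $(-\Delta)^sw_\mu<0$ for $|x|>R_\mu$, bounded below by $-3C_\mu|x|^{-\mu-2s}$ when $\mu\in(N-2s,N)$ and by $-\tilde C_\mu|x|^{-N-2s}$ when $\mu>N$; now $\tfrac12V_\va w_\mu$ must absorb both $|(-\Delta)^sw_\mu|$ and $\tilde{\mathcal P}_\va w_\mu$, and I would split $\{|x|>R\}$ into the near zone $\{\va|x|\le1\}$ and the far zone $\{\va|x|>1\}$. In the near zone $V_\va\ge c_*$ and $\tilde{\mathcal P}_\va\le\rho^{-\tau}\va^\theta$; since $w_\mu\gtrsim|x|^{-\mu}$ while $|(-\Delta)^sw_\mu|$ decays with an $|x|$-exponent strictly larger than $\mu$ (automatic if $\mu<N$, and using $\mu<N+2s$ otherwise), taking $R$ large makes $\tfrac14c_*w_\mu$ dominate $|(-\Delta)^sw_\mu|$, and taking $\va$ small makes $\rho^{-\tau}\va^\theta\le\tfrac14c_*$, so $\tfrac12V_\va w_\mu\ge|(-\Delta)^sw_\mu|+\tilde{\mathcal P}_\va w_\mu$. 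In the far zone $\tilde{\mathcal P}_\va\le c_\La^{-\tau}\va^{\theta-\tau}|x|^{-\tau}$ and $V_\va\gtrsim\va^{-\omega}|x|^{-\omega}$ (resp.\ $\gtrsim(\log(e+\va^2|x|^2))^{-1}$, and $\log(e+\va^2|x|^2)\lesssim\log|x|$): the hypothesis $\mu<N+2s-\omega$ in $(\mathcal A_2)$, $(\mathcal A_3)$ (resp.\ $\mu<N+2s$ with slow logarithmic growth in $(\mathcal A_4)$) lets $\tfrac14V_\va w_\mu$ swallow $|(-\Delta)^sw_\mu|$ once $R$ is large and $\va$ small, while the remaining $\tfrac14V_\va w_\mu\gtrsim\va^{-\omega}|x|^{-\omega-\mu}$ versus $\tilde{\mathcal P}_\va w_\mu\lesssim\va^{\theta-\tau}|x|^{-\tau-\mu}$ reduces, after cancelling $|x|^{-\mu}$, to $|x|^{\tau-\omega}\gtrsim\va^{\theta-\tau+\omega}$; since $\tau>\omega$ and $|x|>1/\va$, this follows from $\va^{-(\tau-\omega)}\gtrsim\va^{\theta-\tau+\omega}$, i.e.\ $1\gtrsim\va^{\theta}$, which holds because $\theta>0$ (forced by $\theta-\tau>-\omega$ together with $\tau>\omega$, or by $\theta>\tau>0$).

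The main obstacle is precisely the far-zone bookkeeping of exponents in this last chain: three competing terms appear, each carrying its own power of the small parameter $\va$ and its own power of $|x|$, and the role of the exact hypotheses $\tau>\omega$, $\mu<N+2s-\omega$, $\theta-\tau>-\omega$ (and of $\tau>2s$, $\theta>\tau$ in $(\mathcal A_1)$) is exactly that, once $|x|>1/\va$ has been used to cancel all powers of $|x|$, the power of $\va$ that survives is $\va^\theta$ with $\theta>0$, hence negligible as $\va\to0$. A secondary technical point is that both the uniform lower bound $V_\va\ge c_*$ in the near zone and the decay lower bound in the far zone depend on the two-sided control of $|\va x+x_\va|$ provided by \eqref{ssa}, together with the fact that $\tilde{\mathcal P}_\va$ is cut off so as to vanish on $\La_\va$; the logarithmic case $(\mathcal A_4)$ differs from the others only in replacing a power of the scale by a slowly growing logarithm, which the large-$R$ margin absorbs.
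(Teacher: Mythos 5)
Your proposal is correct and uses the same three ingredients as the paper (Proposition \ref{tb} for $(-\Delta)^s w_\mu$, the decay lower bound on $V$ transferred to $V_\va$ via \eqref{ssa}, and the upper bound on $\tilde{\mathcal P}_\va$), but it organizes the verification around a genuinely different domain decomposition. The paper splits $\{|x|>R\}$ into $\Lambda_\va\setminus B_R(0)$ and $\rn\setminus\Lambda_\va$: on the first piece $\tilde{\mathcal P}_\va\equiv 0$ and the inequality reduces to the sign check $(-\Delta)^s w_\mu+\tfrac12 V_0 w_\mu\ge0$ (no $\va$ at all), while on the second piece the bound $V_\va\gtrsim\va^{-\om}|x|^{-\om}$ (or the logarithmic analogue) is compared with $\tilde{\mathcal P}_\va w_\mu\le\va^{\theta-\tau}|x|^{-\tau-\mu}$ at a \emph{fixed} large $R$, so the hypothesis $\theta-\tau+\om>0$ is the exact condition that makes $\va^{\theta-\tau+\om}\to 0$. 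You instead split into $\{R<|x|\le 1/\va\}$ and $\{|x|>1/\va\}$, accept the crude uniform bound $\tilde{\mathcal P}_\va\le\rho^{-\tau}\va^\theta$ in the near zone (absorbed by the uniform lower bound $V_\va\ge c_*$ there), and in the far zone use the inclusion $|x|>1/\va$ to trade powers of $|x|$ for powers of $\va$; after this exchange the surviving power is $\va^\theta$ and only $\theta>0$ is needed. Both routes close; yours is slightly more robust (the near zone does not rely on $\tilde{\mathcal P}_\va$ vanishing identically, and the far-zone bookkeeping isolates $\theta>0$ as what is really used), while the paper's version makes the role of the stated inequalities $\theta-\tau>-2s$, $\theta-\tau>-\om$ visible directly. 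One small remark: when you assert $V_\va\gtrsim\va^{-\om}|x|^{-\om}$ uniformly on $\{\va|x|>1\}$, you should note (as is easy to check) that this holds both when $\va x+x_\va\in\Lambda$ (where $V_\va\ge V_0$ already beats $(\va|x|)^{-\om}<1$) and when $\va x+x_\va\notin\Lambda$ (via $|\va x+x_\va|\le C_\Lambda\va|x|$), so the estimate is valid on the whole far zone and not only off $\Lambda_\va$.
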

\begin{proof}
For given large $R>\{R_\mu,1\}$, by Lemma \r{lem5.1} (iii), $B_R(0)\subset \La_\va$ for small $\va$. Recalling \eq{ssa}, we deduce that the right hand side of $\eq{wdd}$ satisties
\begin{align}\label{qj7}
 \tilde{\mathcal P}_\va w_\mu\le \frac{\va^{\theta-\tau}}{|x|^{\tau+\mu}}\chi_{\La_\va^c}.
\end{align}
Now we are going to estimate  the left hand side of $\eq{wdd}$  under the  various assumptions   ($\mathcal{A}_1$)  -($\mathcal{A}_4$) .

\vspace{0.1cm}

\textbf{The case under the assumption ($\mathcal{A}_1$):} For $\mu\in (0, N-2s)$,
from Proposition \r{tb}, we have
 $$\Fs w_\mu  +\frac{1}{2}V_{\va} w_\mu  \ge\frac{C}{|x|^{\mu+2s}} \  for  \ |x|>R.$$

\vspace{0.1cm}

\textbf{The case under the assumption ($\mathcal{A}_2$):}
From Proposition \r{tb}, for $R$ large, we have
$$\Fs w_\mu+\frac{1}{2}V_{\va} w_\mu\ge -\frac{3C_\mu}{|x|^{2s}}\frac{1}{|x|^\mu}+\frac{1}{2}V_0\frac{1}{(1+|x|^2)^{\mu/2}}\ge0,\ \ x\in \La_\va\setminus B_R(0).$$
Since $\inf_{x\in\rn}V(1+|x|^{2s})>0$, there exists $C>0$ such that $V(x)\ge \frac{C}{|x|^{2s}}$ for $|x|\ge1$.
By $\eq{ssa}$,  for $\va>0$ small,
 we have
$$\Fs w_\mu+\frac{1}{2}V_{\va} w_\mu\ge-\frac{3C_\mu}{|x|^{2s+\mu}}+\frac{C}{2C_\La^{2s}\va^{2s}}
\frac{1}{|x|^{2s}}\frac{1}{(1+|x|^2)^{\mu/2}}\ge\frac{C\va^{-2s}}{|x|^{\mu+2s}},\quad x\in\R^N\backslash\Lambda_\va.$$

\textbf{The case under the assumption ($\mathcal{A}_3$):}
%$\inf_{x\in\rn}V(x)(1+|x|^{\omega})>0$ for some $\omega\in [0,2s)$ and $N<\mu< N+2s-\omega$.
From Proposition \r{tb}, we get for $R$ large and $\va$ small that
$$\Fs w_\mu+\frac{1}{2}V_{\va} w_\mu\ge -\frac{\tilde{C}_\mu}{|x|^{N+2s}}+\frac{1}{2}V_0\frac{1}{(1+|x|^2)^{\mu/2}}\ge0,\quad x\in\La_\va\setminus B_R(0).$$
Since $\inf_{x\in\rn}V(1+|x|^{\om})>0$ for some $\omega\in [0,2s)$, there exists $C_\om>0$ such that $V(x)\ge \frac{C_\om}{|x|^{\om}}$ for $|x|\ge1$. Thus for $\va>0$ small,
it follows from $\eq{ssa}$, Proposition \r{tb} and $\mu+\om<N+2s$ that
$$\Fs w_\mu+\frac{1}{2}V_{\va} w_\mu\ge-
\frac{\tilde{C}_\mu}{|x|^{N+2s}}+\frac{C_\omega}{2C_\La^{\omega}\va^{\omega}}\frac{1}{|x|^{\om}}\frac{1}{(1+|x|^2)^{\mu/2}}\ge\frac{C\va^{-\omega}}{|x|^{\mu+\omega}}$$
for all $x\in\rn\setminus \La_\va$.

\textbf{The case under the assumption ($\mathcal{A}_4$):}
%$\inf_{x\in\rn}V(x)\log(e+|x|^2)>0$ and $N<\mu< N+2s$.
From Proposition \r{tb}, we get for $R$ large and $\va$ small that
$$\Fs w_\mu+\frac{1}{2}V_{\va} w_\mu\ge -\frac{\tilde{C}_\mu}{|x|^{N+2s}}+\frac{1}{2}V_0\frac{1}{(1+|x|^2)^{\mu/2}}\ge0,\quad x\in\La_\va\setminus B_R(0).$$
Since $\inf_{x\in\rn}V(x)\log(e+|x|^2)>0$, $V(x)\ge \frac{C}{\log(e+|x|^2)}$ for some $C>0$. For $\va$ small enough, it follows from $\eq{ssa}$ that
$$V_\va(x)=V(\va x+x_\va)\ge \frac{C}{\log(e+|\va x+x_\va|^2)}\ge \frac{C}{\log(e+C_\La^2\va^2|x|^2)}\ge \frac{C}{\log(e+|x|^2)}.$$
Therefore, for $\va>0$ small,
it follows from $\eq{ssa}$, Proposition \r{tb} and $\mu<N+2s$ that
$$\Fs w_\mu+\frac{1}{2}V_{\va} w_\mu\ge-
\frac{\tilde{C}_\mu}{|x|^{N+2s}}+\frac{C}{\log(e+|x|^2)}\frac{1}{(1+|x|^2)^{\mu/2}}\ge\frac{C}{|x|^{\mu}\log(e+|x|^2)}$$
for  $x\in\rn\setminus \La_\va$.

The conclusion follows immediately by summarizing above four cases with (\ref {qj7}) and the corresponding assumptions on $\tau$, $\theta$ and $\omega$.
\end{proof}

Next we are on the point of checking the two pre-assumptions ($\mathcal{P}_1$)-($\mathcal{P}_2$) in Section 2.
\begin{proposition}\label{w34}
Assume that one of the following three conditions holds:

 ($\mathcal{B}_1$) $\theta>2s$ and $\tau>2s$;

 ($\mathcal{B}_2$) $\theta>0$ and $\tau>\om$ when
$V$ satisfies $(\mathcal{V}_{lsd})$;

($\mathcal{B}_3$) $\theta>0$ and $\tau>0$ when
$V$ satisfies $(\mathcal{V}_{log})$.

\noindent Then
the penalized function $\mathcal{P}_{\varepsilon}$ given by (\ref {ph})
satisfies ($\mathcal{P}_1$) and ($\mathcal{P}_2$).
\end{proposition}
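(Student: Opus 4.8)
The plan is to verify the two embedding assumptions $(\mathcal{P}_1)$ and $(\mathcal{P}_2)$ for $\mathcal{P}_\va(x)=\frac{\va^\theta}{|x|^\tau}\chi_{\La^c}$ by comparing it against the quadratic form $\|\cdot\|_\va^2$ on $H^s_{V,\va}(\rn)$, splitting $\rn$ into a region near infinity (where $|x|$ is large, so $|x|^{-\tau}$ is small) and a bounded region where one uses either the Hardy-type inequality from Proposition \ref{prop2.1} or the weight $V$ itself. The three cases $(\mathcal{B}_1)$--$(\mathcal{B}_3)$ correspond to three different mechanisms for controlling $\mathcal{P}_\va u^2$ near the origin-complement of $\La$.

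First I would treat $(\mathcal{P}_2)$. Fix a large radius $\rho>0$ with $\La\subset B_\rho(0)$. For $|x|\ge \rho$ we simply bound $\mathcal{P}_\va(x)\le \va^\theta\rho^{-\tau}$, so $\int_{|x|\ge\rho}\mathcal{P}_\va u^2\le \va^\theta\rho^{-\tau}\int_{\rn}u^2$; but since $u\in\dot H^s$ need not be in $L^2$ globally, I would instead keep the weight and write $\mathcal{P}_\va(x)\le \va^\theta |x|^{-\tau}$ and, in case $(\mathcal{B}_1)$, use $|x|^{-\tau}\le |x|^{-2s}$ on $|x|\ge 1$ together with the fractional Hardy inequality $\int_{\rn}\frac{u^2}{|x|^{2s}}\le C\|u\|_{\dot H^s}^2$: this gives $\int_{\rn}\mathcal{P}_\va u^2\le C\va^{\theta-2s}\|u\|_{\dot H^s}^2\le C\va^{\theta-2s}\va^{-2s}\cdot \va^{2s}\|u\|_{\dot H^s}^2$. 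Hmm — more carefully, $\va^{2s}\|u\|_{\dot H^s}^2\le \|u\|_\va^2$, so $\int\mathcal{P}_\va u^2\le C\va^{\theta-2s}\va^{-2s}\|u\|_\va^2$; that blows up. The correct bookkeeping: on $\La^c$ we have $|x|\ge \dist(0,\La^c)>0$ since $0\in\La$, wait $0\in\La$ means $\La^c$ is away from $0$ only if... no. The genuine point is that on the support of $\mathcal{P}_\va$, namely $\La^c$, one has $|x|\ge c_0:=\dist(0,\partial\La)>0$ is false in general; rather one uses $V_0\le V$ inside $\La$ is irrelevant here. Instead, since $\La$ is bounded, $\La^c\supset \{|x|\ge \rho\}$, and on $\{|x|\ge R\}$ for large $R$ we get smallness from $\va^\theta|x|^{-\tau}\le \va^\theta R^{-\tau}$ combined with the $L^{2^*_s}$ embedding via Hölder on an annulus; on the bounded annulus $\La^c\cap B_\rho$, $\mathcal{P}_\va\le C\va^\theta$ and one uses compactness of $\dot H^s\hookrightarrow L^2_{\loc}$ plus, when $V$ vanishes, the Hardy weight. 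In cases $(\mathcal{B}_2)$ and $(\mathcal{B}_3)$ one replaces the Hardy inequality by $V(x)\ge c|x|^{-\om}$ (resp. $V(x)\ge c/\log(e+|x|^2)$) for large $|x|$, so that $\mathcal{P}_\va(x)=\va^\theta|x|^{-\tau}\le \va^\theta C |x|^{\om-\tau}V(x)\le \va^\theta C V(x)$ once $\tau>\om$ and $|x|\ge 1$, giving $\int\mathcal{P}_\va u^2\le C\va^\theta\int V u^2\le C\va^\theta\|u\|_\va^2$, which is $\le\k\|\cdot\|_\va^2$ for $\va$ small; the near-origin part of $\La^c$ is again handled by $\mathcal{P}_\va\le C\va^\theta$ there and the same $V$- or Hardy-control.

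Next, for $(\mathcal{P}_1)$, I would show $H^s_{V,\va}(\rn)\hookrightarrow\hookrightarrow L^2(\rn,\mathcal{P}_\va\,dx)$. Take a bounded sequence $u_n\wto u$ in $H^s_{V,\va}$; by Proposition \ref{prop2.1} it converges strongly in $L^2_{\loc}$, so $\int_{B_R}\mathcal{P}_\va|u_n-u|^2\to 0$ for every $R$. For the tail, on $\{|x|\ge R\}\subset\La^c$ we bound $\int_{|x|\ge R}\mathcal{P}_\va|u_n-u|^2$ exactly as in $(\mathcal{P}_2)$ — by $\eta(R)\|u_n-u\|_\va^2$ with $\eta(R)\to 0$ as $R\to\infty$ (using $|x|^{-\tau}\le R^{2s-\tau}|x|^{-2s}$ and Hardy in case $(\mathcal{B}_1)$, or $|x|^{-\tau}\le R^{\om-\tau}|x|^{-\om}\le CR^{\om-\tau}V(x)$ in case $(\mathcal{B}_2)$, or the logarithmic analogue in $(\mathcal{B}_3)$). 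Combining, a diagonal/$\varepsilon$-of-room argument gives $\int_{\rn}\mathcal{P}_\va|u_n-u|^2\to 0$, which is $(\mathcal{P}_1)$.

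The main obstacle I anticipate is the bookkeeping of powers of $\va$ in $(\mathcal{P}_2)$: one needs the constant in front of $\|u\|_\va^2$ to be a genuine small number $\k<\tfrac12$, and the naive use of $\va^{2s}\|u\|_{\dot H^s}^2\le\|u\|_\va^2$ loses exactly the factor $\va^{-2s}$ that cancels the $\va^\theta$ when $\theta>2s$ — so the inequality $\theta>2s$ in $(\mathcal{B}_1)$ is used precisely here, and one must be careful to apply Hardy \emph{with} the $\va^{2s}$ already attached, i.e. $\va^\theta\int|x|^{-2s}u^2\le C\va^\theta\|u\|^2_{\dot H^s}=C\va^{\theta-2s}\cdot\va^{2s}\|u\|^2_{\dot H^s}\le C\va^{\theta-2s}\|u\|_\va^2$, which is small since $\theta>2s$. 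In cases $(\mathcal{B}_2)$, $(\mathcal{B}_3)$ the $V$-term already carries no $\va$, so only $\theta>0$ is needed, matching the statement. The remaining routine points are the elementary inequalities $|x|^{-\tau}\le C|x|^{-\om}V(x)$ on $|x|\ge1$ under $(\mathcal{V}_{lsd})$ and $\tau>\om$, and the boundedness of $\mathcal{P}_\va$ on the compact set $\overline{\La^c\cap B_\rho}\setminus\{0\}$ — here one notes $0\in\La$, so $\La^c$ stays a positive distance from the origin and $|x|^{-\tau}$ is bounded on it.
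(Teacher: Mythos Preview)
Your proposal is correct and follows essentially the same route as the paper: for $(\mathcal{P}_2)$ you compare $\mathcal{P}_\va$ against $|x|^{-2s}$ via Hardy in case $(\mathcal{B}_1)$ (with the $\va^{\theta-2s}$ bookkeeping you eventually settle on) and against $V$ itself in cases $(\mathcal{B}_2)$, $(\mathcal{B}_3)$; for $(\mathcal{P}_1)$ you split into a tail controlled by the same weights with an $R^{2s-\tau}$ or $R^{\om-\tau}$ decay and a bounded piece handled by $\dot H^s\hookrightarrow L^2_{\mathrm{loc}}$ compactness. The only cosmetic difference is that the paper does not split $\La^c$ for $(\mathcal{P}_2)$ at all: since $0\in\La$ forces $|x|\ge d>0$ on $\La^c$, one has $|x|^{-\tau}\le C|x|^{-2s}$ (resp.\ $\le C|x|^{-\om}\le C' V(x)$) uniformly on the whole of $\La^c$, which streamlines the argument and avoids the spurious appearance of ``compactness'' in your $(\mathcal{P}_2)$ discussion.
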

\begin{proof}
We only verify ($\mathcal{P}_1$) and ($\mathcal{P}_2$) under the assumptions ($\mathcal{B}_1$) and ($\mathcal{B}_2$). The case under the assumption ($\mathcal{B}_3$) is very similar to the case of ($\mathcal{B}_2$) and we omit the details here.

We first verify ($\mathcal{P}_2$).

 {\bf The case under the assumption ($\mathcal{B}_1$):} For any $\var\in \Ds$, by the assumption $\theta>2s$, $\tau>2s$ and Hardy inequality (see Proposition \r{prop2.1}),  we have,
\begin{align*}
 \int_{\mathbb{R}^{N}}\mathcal{P}_{\varepsilon}(x) \var^2=\va^{\theta}\int_{\La^c}\frac{1}{|x|^{\tau}}\var^2
\le C\va^{\theta}\int_{\La^c}\frac{|\var|^2}{|x|^{2s}}\le\k\va^{2s}\|\var\|^2_{\dot{H}^s(\rn)} \ for \
small \ \va>0,
\end{align*}
which implies $(\mathcal{P}_2)$.

{\bf The case under the assumption ($\mathcal{B}_2$):} Clearly, there exists a $C_\omega>0$ such that $V\ge\frac{C_\omega}{|x|^\omega}$ in $\rn\setminus\La$. By the assumptions $\theta>0$ and $\tau>\om$, for $\va$ small,  we have
\begin{align*}
 \int_{\mathbb{R}^{N}}\mathcal{P}_{\varepsilon}(x) \var^2=\va^{\theta}\int_{\La^c}\frac{1}{|x|^{\tau}}\var^2
\le\k C_\om\int_{\La^c}\frac{1}{|x|^\omega}\var^2\le\k\int_{\rn} V\var^2,
\end{align*}
which also implies $(\mathcal{P}_2)$.

 Next we turn to check ($\mathcal{P}_1$). Let $\{v_n\}_{n\in\N}$ is a bounded sequence in $\hv$. Up to a subsequence, there exists
some $v\in\hv$ such that $v_n\rightharpoonup v$ in $\hv$ and $v_n\to v$ in $L_{\mathrm{loc}}^q(\rn)$ for $q\in [1,2_s^*)$. Let $\va<1$ and $M>1$ such that
$\La\subset B_M(0)$.

{\bf The case under the assumption ($\mathcal{B}_1$):} By the assumption $\theta>2s$, $\tau>2s$ and Hardy inequality,
\begin{align}\label{zr}
\int_{\rn}|v_n-v|^2\P\nonumber
=&\va^{\theta}\int_{\rn\setminus B_M(0)}|v_n-v|^2\frac{1}{|x|^{\tau}}+\va^{\theta}\int_{B_M(0)\setminus\La}|v_n-v|^2\frac{1}{|x|^{\tau}}\nonumber\\
\le&\frac{\va^{2s}}{M^{\tau-2s}}\int_{\rn\setminus B_M(0)}\frac{|v_n-v|^2}{|x|^{2s}}+C\int_{B_M(0)\setminus\La}|v_n-v|^2,\\
\le&\frac{C}{M^{\tau-2s}}(\sup_{n\in\N}\va^{2s}\|v_n\|^2_{\dot{H}^s(\rn)}+\va^{2s}\|v\|^2_{\dot{H}^s(\rn)})
+C\int_{B_M(0)\setminus\La}|v_n-v|^2,\nonumber
\end{align}
which implies $v_n\to v$ in $L^2\big(\rn, \P\d x\big)$ as $n\to\wq$ and thereby ($\mathcal{P}_1$) holds.

{\bf The case under the assumption ($\mathcal{B}_2$):} Note $V\ge\frac{C_\omega}{|x|^\omega}$ in $\rn\setminus\La$. By the assumption $\theta>0$ and $\tau>\om$,
\begin{align}\label{zr}
\int_{\rn}|v_n-v|^2\P\nonumber
=&\va^{\theta}\int_{\rn\setminus B_M(0)}|v_n-v|^2\frac{1}{|x|^{\tau}}+\va^{\theta}\int_{B_M(0)\setminus\La}|v_n-v|^2\frac{1}{|x|^{\tau}}\nonumber\\
\le&\frac{1}{M^{\tau-\omega}}\int_{\rn\setminus B_M(0)}\frac{|v_n-v|^2}{|x|^{\omega}}+C\int_{B_M(0)\setminus\La}|v_n-v|^2\\
\le&\frac{C}{M^{\tau-\omega}}(\sup_{n\in\N}\int_{\rn}Vv_n^2+\int_{\rn}Vv^2)+C\int_{B_M(0)\setminus\La}|v_n-v|^2,\nonumber
\end{align}
which indicates $v_n\to v$ in $L^2\big(\rn, \P\d x\big)$ as $n\to\wq$ and so ($\mathcal{P}_1$) holds.

Hence  the proof is completed.
\end{proof}

Now we apply the comparison principle in Proposition \r{5z} to get the decay estimates of $u_\va$.
\begin{proposition}\label{wq8}
Let $N>2s$, $p\in (2,2_s^*)$ and $V$ satisfy $(\mathcal{V})$. If one of  ($\mathcal{A}_1$)-($\mathcal{A}_4$) holds,
then ($\mathcal{P}_1$)-($\mathcal{P}_2$) hold and there exists $C>0$ independent of small $\va$  such that
$v_\va\le Cw_\mu$. In  particular,
\begin{align}\label{wsq}
u_\va\le\frac{C\va^\mu}{|x|^\mu}\ \mathrm{in}\ \rn\setminus\La.
\end{align}
%where  $u_\va$ is given by Lemma \r{lem3.6} and $\{x_{\varepsilon}\}_\va$ is given by Lemma \ref{jz}.
\end{proposition}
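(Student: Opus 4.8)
The plan is to combine the sub-solution property of $v_\va$ from Proposition~\ref{prop5.3}, the super-solution property of $w_\mu$ from Proposition~\ref{ed5}, and the comparison principle of Proposition~\ref{5z}. First I would observe that under each of the conditions $(\mathcal{A}_1)$--$(\mathcal{A}_4)$, the corresponding smallness/decay conditions on $\theta,\tau$ needed in $(\mathcal{B}_1)$--$(\mathcal{B}_3)$ are automatically satisfied (e.g.\ $(\mathcal{A}_1)$ gives $\theta>\tau>2s$, hence $(\mathcal{B}_1)$; $(\mathcal{A}_2)$ and $(\mathcal{A}_3)$ give $\tau>\om$, hence $(\mathcal{B}_2)$; $(\mathcal{A}_4)$ gives $(\mathcal{B}_3)$), so Proposition~\ref{w34} yields $(\mathcal{P}_1)$--$(\mathcal{P}_2)$, and in particular Lemma~\ref{lem5.1} applies so that $u_\va$, $v_\va$ are well defined with the stated properties. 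Fix $R>\max\{R_0,R_\mu,1\}$ and $\va$ small so that Propositions~\ref{prop5.3} and~\ref{ed5} both hold on $\rn\setminus B_R(0)$.

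Next I would set $z_\va := v_\va - C w_\mu$ for a constant $C>0$ to be chosen, and show $z_\va\le 0$ via Proposition~\ref{5z}. Two things must be arranged: that $z_\va\le 0$ on $B_R(0)$, and that $z_\va$ is a weak sub-solution of $(-\Delta)^s z + \tfrac12 V_\va z \le \tilde{\mathcal P}_\va z$ outside $B_R(0)$. For the first point I would use Lemma~\ref{w} (uniform $L^\infty$ bound on $u_\va$, hence on $v_\va$) together with the fact that $w_\mu$ is bounded below by a positive constant on the compact set $\overline{B_R(0)}$; choosing $C$ large (independently of $\va$, since the $L^\infty$ bound is uniform) forces $v_\va - Cw_\mu\le 0$ there. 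For the second point I would subtract the super-solution inequality \eqref{wdd} (valid pointwise, hence weakly after testing against nonnegative $\varphi\in H^s_{c,R}(\rn)$, using Proposition~\ref{cc5}(ii)--(iii) to turn the pointwise $(-\Delta)^sw_\mu$ into the bilinear form) from the sub-solution inequality for $v_\va$ in Proposition~\ref{prop5.3}; linearity gives exactly the required differential inequality for $z_\va=v_\va-Cw_\mu$. I also need $z_\va\in\dot H^s(\rn)$ with $\int_{\rn}V_\va (z_\va)_+^2<\infty$: $v_\va\in\dot H^s$ by construction and $\int V_\va v_\va^2<\infty$ since $u_\va\in H^s_{V,\va}$, while $w_\mu\in\dot H^s$ requires $\mu>\tfrac{N-2s}{2}$ (Proposition~\ref{cc5}(i)), which holds in all four cases since $\mu>\min\{\,\cdot\,\}> \tfrac{N-2s}{2}$; the weighted integrability of $w_\mu$ follows from $(\mathcal{V}_{usd})$-type bounds on $V_\va$ and Proposition~\ref{cc5}/\ref{tb} estimates. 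Then Proposition~\ref{5z} gives $z_\va\le 0$, i.e.\ $v_\va\le C w_\mu$ on $\rn$.

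Finally I would translate this back to $u_\va$: for $x\in\rn\setminus\La$ write $x = \va y + x_\va$ with $y\notin\La_\va$, so $u_\va(x) = v_\va(y)\le C w_\mu(y) = C(1+|y|^2)^{-\mu/2}\le C|y|^{-\mu}$, and by \eqref{ssa} we have $|y|\ge c_\La^{-1}\va^{-1}|x - x_\va|\ge c\,\va^{-1}|x|$ on $\rn\setminus\La$ (using $0\in\La$ and that $x_\va$ stays in the bounded set $\La$), giving $u_\va(x)\le C\va^\mu|x|^{-\mu}$, which is \eqref{wsq}. The main obstacle I anticipate is the bookkeeping at the boundary $B_R(0)$: one must be careful that the constant $C$ can indeed be chosen uniformly in $\va$, which relies essentially on the uniform $L^\infty$ estimate of Lemma~\ref{w} (so that $\|v_\va\|_{L^\infty}$ does not blow up) and on $\inf_{\overline{B_R(0)}} w_\mu>0$; a secondary technical point is justifying the weak formulation of the subtracted inequality, i.e.\ checking that $w_\mu$ may legitimately be used in the bilinear form against all test functions in $H^s_{c,R}(\rn)$, for which Proposition~\ref{cc5}(iii) is exactly what is needed.
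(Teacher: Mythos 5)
Your proposal follows the paper's argument almost step by step: verify $(\mathcal{A}_i)\Rightarrow(\mathcal{B}_j)$ so Proposition~\ref{w34} applies, set $z_\va=v_\va-Cw_\mu$, use the uniform $L^\infty$ bound from Lemma~\ref{w} together with $\inf_{\overline{B_R}}w_\mu>0$ to choose $C$ independent of $\va$, subtract the super-solution inequality \eqref{wdd} from the sub-solution inequality of Proposition~\ref{prop5.3} (using Proposition~\ref{cc5} to pass between pointwise and weak forms), invoke Proposition~\ref{5z}, and rescale via \eqref{ssa}. The paper does exactly this, with $\bar w_\mu=A_0(1+R^{\mu/2})^2w_\mu$ as the explicit choice of $C$.

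There are, however, two slips in the bookkeeping paragraph that would break your verification of the hypotheses of Proposition~\ref{5z}, both in the case $(\mathcal{A}_1)$. First, you claim $\mu>\tfrac{N-2s}{2}$ ``in all four cases'', but $(\mathcal{A}_1)$ allows any $\mu\in(0,N-2s)$, which includes $\mu\le\tfrac{N-2s}{2}$, where $w_\mu\notin\dot H^s$ by Proposition~\ref{cc5}(i). Second, you argue $\int V_\va w_\mu^2<\infty$ from ``$(\mathcal{V}_{usd})$-type bounds on $V_\va$'', but $(\mathcal{A}_1)$ carries no decay hypothesis on $V$ at all (it is the case $p>2+\tfrac{2s}{N-2s}$ with only the local assumption $(\mathcal{V})$), so no such bound is available and $\int V_\va w_\mu^2$ can indeed be infinite. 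Neither detour is needed: since $v_\va>0$ and $w_\mu>0$, one has $(z_\va)_+\le v_\va$ pointwise, hence $\int_{\rn}V_\va(z_\va)_+^2\le\int_{\rn}V_\va v_\va^2<\infty$ directly, which is the paper's observation; and for the $\dot H^s$ requirement one may simply restrict to the sub-range $\mu\in(\tfrac{N-2s}{2},N-2s)$ under $(\mathcal{A}_1)$ (which is all that is used in the proof of Theorem~\ref{thm1.2}) and note that the conclusion for smaller $\mu$ follows trivially because $w_{\mu'}\le w_\mu$ when $\mu'\ge\mu$.
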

\begin{proof}
Clearly, ($\mathcal{P}_1$)-($\mathcal{P}_2$) hold by Proposition \r{w34}, and \eq{wdd} holds by Proposition \r{ed5}.

By Lemma \r{w}, we have $A_0:=\sup_{\va\in(0,\va_0)}\|v_\va\|_{L^\wq(\rn)}<\wq$.
Fix $R$ large enough and let $\bar{w}_\mu=A_0(1+R^{\mu/2})^2 w_\mu$, $\bar{v}_\va=v_\va-\bar{w}_\mu$. Clearly, $\bar{v}_\va\le0$ in $B_R(0)$, $\bar{v}_\va\in \dot{H}^s(\rn)$ and $\int_{\rn}V_\va(\bar{v}_{\va,+})^2\le\int_{\rn}V_\va v^2_\va<\wq$. Moreover, from Proposition \r{prop5.3}, \eq{wdd} and Proposition \r{cc5} (iii), $\bar{v}_\va$  satisfies weakly
$$\Fs\bar{v}_\va+\frac{1}{2}V_{\va}\bar{v}_\va\le \tilde{\mathcal P}_\va\bar{v}_\va\ \ \mathrm{in}\ \R^N\backslash B_R(0).$$
It follows  from Proposition \r{5z} that
$\bar{v}_\va\le0$ in $\rn$. Then $v_\va\le Cw_\mu$. In particular, if $x\in \rn\setminus\La$, noting that $\liminf_{\va\to0}\dist(x_\va,\rn\setminus\La)>0$, it holds
\begin{align}\label{sjj}
u_\va(x)=v_\va\Big(\frac{x-x_\va}{\va}\Big)\le\frac{C}{\left(1+|\frac{x-x_\va}{\va}|^2\right)^{\frac{\mu}{2}}}\le \frac{C\va^\mu}{\va^\mu+|x-x_\va|^\mu}\le\frac{C\va^\mu}{|x|^\mu}.
\end{align}
This completes the proof.
\end{proof}

\noindent\textbf{Proof of Theorem \r{thm1.2}}:

{\bf  The case under the assumption ($\mathcal{Q}_2$) with $\om=2s$, i.e. $p>2+\frac{2s}{N}$, $\inf_{x\in\rn}V(x)(1+|x|^{2s})>0$.}

In this case, we select $\tau$ and $\theta$, and $\mu\in (N-2s,N)$  sufficiently close to $N$, such that
 \begin{align}\label{lll}
 2s<\tau<\theta<\mu(p-2)<N(p-2).
 \end{align}
Recall that  $\mathcal{P}_\va=\frac{\va^\theta}{|x|^\tau}\chi_{\La^c}$  and $w_\mu=\frac{1}{(1+|x|^2)^\frac{\mu}{2}}$. It follows by Proposition \r{wq8} that
\begin{align*}
 u_\va^{p-2}\le\frac{C\va^{\mu(p-2)}}{|x|^{\mu(p-2)}}<\mathcal{P}_\va\ \mathrm{in}\ \rn\setminus\La
\end{align*}
provided $\va$ small enough. Hence $u_\va$ is indeed a solution to the original problem \eq{eqs1.1}. Moreover, from $u(x_\va+\va x)\le Cw_\mu$ and $\liminf_{\va\to0}\dist(x_\va,\La^c)>0$, we get
$$u_\va\le \frac{C\va^\mu}{\va^\mu+|x-x_\va|^\mu},\quad x\in\rn.$$
By Lemma \r{w}, $u_\va\in L^\wq(\rn)$. Then $-Vu_\va+u_\va^{p-1}\in L^\wq_{\mathrm{loc}}(\rn)$, it follows from \c[Proposition 5]{re} that
$u_\va\in C^{\si}_{\mathrm{loc}}(\rn)$ for any $\si\in(0,\min\{2s,1\})$.
This completes the proof.

The proofs for the other cases are similar. So we only  provide  the selection of $p$ and parameters and  omit the detailed calculations here.

{\bf  The case under the assumption ($\mathcal{Q}_2$) with $\om\in[0,2s)$, i.e. $p>2+\frac{\om}{N+2s-\om}$, $\inf_{x\in\rn}V(x)(1+|x|^{\om})>0$.}

In this case, we select $\tau$ and $\theta$, and $\mu\in (N,N+2s-\om)$ sufficiently close to $N+2s-\om$,  such that
 \begin{align*}
 \om<\tau<\theta<\mu(p-2)<(N+2s-\om)(p-2).
\end{align*}

{\bf  The case under the assumption ($\mathcal{Q}_1$), i.e. $p>2+\frac{2s}{N-2s}$.}

In this case, we select  $\tau$ and $\theta$, and $\mu\in (0,N-2s)$ sufficiently close to $N-2s$, such that
 \begin{align*}
 2s<\tau<\theta<\mu(p-2)<(N-2s)(p-2).
\end{align*}

{\bf  The case under the assumption ($\mathcal{Q}_3$), i.e. $p>2$, $\inf_{x\in\rn}V(x)\log(e+|x|^2)>0$.}

In this case, we select $\tau$ and $\theta$, and  $\mu\in (N,N+2s)$   sufficiently close to $N+2s$,  such that
 \begin{align*}
 0<\tau<\theta<\mu(p-2)<(N+2s)(p-2).
\end{align*}

As a result, we complete the proof of Theorem \r{thm1.2}.

\vspace{0.2cm}

\section{Nonexistence results}\label{s6}
In this section, we aim to obtain some nonexistence results for \eq{eqs1.1}. Before that, we present the following comparison principle for fractional Laplacian.
\begin{lemma}\label{ws8}(Comparison principle) Let  $f(x)\in L^1_{\mathrm{loc}}(\rn\backslash\{0\})$ with $f(x)\ge0$. Suppose that $\tilde v\in \dot{H}(\rn)\cap C(\rn)$ with $\tilde v>0$  is  a weak supersolution to
\begin{align*}
  (-\De)^s v+Vv= f(x),\quad x\in \rn\backslash B_{R}(0),
\end{align*}
and that $\underline{v}_\lambda \in \dot{H}(\rn)\cap C(\rn)$ with $\underline{v}_\lambda >0$ is  a weak subsolution to
\begin{align*}
  (-\De)^s {v}+V{v}= \la f(x),\quad x\in \rn\backslash B_{R'}(0),
\end{align*}
where $R,R',\la>0$ are constants. Then there holds
\begin{align*}
  \tilde v\ge C \underline {v}_\lambda,\quad x\in\rn,
\end{align*}
where $C>0$ is a constant depending  on $\la$, $\tilde{R}:=\max\{R,R'\}$,
$\min_{B_{\tilde{R}}(0)} \tilde v$ and $\max_{B_{\tilde{R}}(0)}\underline{v}_\lambda$.
\end{lemma}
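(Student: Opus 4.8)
The plan is to prove the comparison principle by a maximum-principle argument applied to the function $w = C\underline{v}_\lambda - \tilde v$ for a suitably chosen constant $C>0$, showing $w \le 0$ everywhere. First I would fix $\tilde R = \max\{R,R'\}$ and choose $C = \lambda^{-1}\,\dfrac{\min_{B_{\tilde R}(0)}\tilde v}{\max_{B_{\tilde R}(0)}\underline{v}_\lambda}$ (up to adjusting by a harmless factor), so that by construction $C\underline{v}_\lambda \le C\,\max_{B_{\tilde R}}\underline{v}_\lambda \le \lambda^{-1}\min_{B_{\tilde R}}\tilde v \le \lambda^{-1}\tilde v$ on $B_{\tilde R}(0)$; since also $\lambda \le 1$ is not assumed, one should instead arrange $C$ so that $C\underline{v}_\lambda \le \tilde v$ on the closed ball $\overline{B_{\tilde R}(0)}$, which only requires $C \le \min_{B_{\tilde R}}\tilde v / \max_{B_{\tilde R}}\underline{v}_\lambda$. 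Meanwhile the scaling of the subsolution inequality by $C$ gives that $C\underline{v}_\lambda$ is a weak subsolution of $(-\Delta)^s v + Vv = C\lambda f$ in $\mathbb{R}^N\setminus B_{\tilde R}(0)$; choosing in addition $C \le 1/\lambda$ ensures $C\lambda f \le f$, so that $w_+ := (C\underline{v}_\lambda - \tilde v)_+$ solves, in the weak sense,
\begin{align*}
(-\Delta)^s w + Vw \le C\lambda f - f \le 0 \quad \text{in } \mathbb{R}^N\setminus B_{\tilde R}(0),
\end{align*}
with $w \le 0$ on $B_{\tilde R}(0)$.

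Next I would test this differential inequality against $w_+$ itself, exactly as in the proof of Proposition \ref{5z}: approximate $w_+$ by cutoffs $\eta(x/n)w_+ \in H^s_{c,\tilde R}(\mathbb{R}^N)$, use that $w_+ = 0$ on $B_{\tilde R}(0)$, use the elementary inequality $(a-b)(a_+-b_+) \ge |a_+-b_+|^2$ to get a sign on the Gagliardo form, invoke \cite[Lemma 5]{pap} to pass the cutoffs to the limit in $\dot H^s(\mathbb{R}^N)$, and apply Fatou for the $Vw_+^2$ term. This yields
\begin{align*}
[w_+]_{\dot H^s(\mathbb{R}^N)}^2 + \int_{\mathbb{R}^N} V w_+^2 \le 0,
\end{align*}
hence $w_+ \equiv 0$, i.e. $C\underline{v}_\lambda \le \tilde v$ on all of $\mathbb{R}^N$. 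One subtlety is that $w_+$ need not lie in $\dot H^s(\mathbb{R}^N)$ a priori: both $\tilde v$ and $\underline{v}_\lambda$ are assumed in $\dot H^s(\mathbb{R}^N)\cap C(\mathbb{R}^N)$, so $w = C\underline{v}_\lambda - \tilde v \in \dot H^s(\mathbb{R}^N)$ and $w_+ \in \dot H^s(\mathbb{R}^N)$ by the standard fact that truncation is bounded on $\dot H^s$; this is where the hypothesis $\tilde v,\underline{v}_\lambda \in \dot H^s(\mathbb{R}^N)$ is used.

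The main obstacle is bookkeeping the constant $C$: it must simultaneously satisfy $C\lambda \le 1$ (to make the right-hand side inequality $C\lambda f \le f$ hold, using $f \ge 0$) and $C \le \min_{B_{\tilde R}}\tilde v/\max_{B_{\tilde R}}\underline{v}_\lambda$ (to control the comparison on the ball), so one simply takes $C = \min\{\lambda^{-1},\ \min_{B_{\tilde R}(0)}\tilde v/\max_{B_{\tilde R}(0)}\underline{v}_\lambda\}$, which is strictly positive because $\tilde v,\underline{v}_\lambda$ are continuous and positive, hence bounded away from $0$ on the compact ball. A secondary technical point is justifying the testing against $w_+$: strictly speaking one needs $V w_+^2 \in L^1$, which follows once one knows $w_+$ decays (it does, being dominated by $C\underline{v}_\lambda$), or one handles it via the Fatou step without ever assuming finiteness in advance, exactly as in Proposition \ref{5z}. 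Finally, since the inequality $(-\Delta)^s w + Vw \le 0$ with $V \ge 0$ is what drives the argument, one should double-check that the weak formulations of ``supersolution'' and ``subsolution'' combine correctly after scaling — this is routine linearity of the bilinear form, but worth stating explicitly so the reader sees that the $V$-terms also combine with the right sign.
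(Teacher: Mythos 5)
Your proposal is correct and follows essentially the same route as the paper's (very terse) proof: choose $C>0$ so that $C\underline{v}_\lambda\le\tilde v$ on $\overline{B_{\tilde R}(0)}$ and $C\lambda\le 1$, then set $w=C\underline{v}_\lambda-\tilde v$ and run the nonlocal Stampacchia/cutoff argument of Proposition~\ref{5z} on $w_+$. The paper's constant is written as $\min\{1,\tfrac{\min_{B_{\tilde R}}\tilde v}{\max_{B_{\tilde R}}\underline{v}_\lambda}\}\cdot\tfrac{1}{\max\{1,\lambda\}}$ while you use $\min\{\lambda^{-1},\tfrac{\min_{B_{\tilde R}}\tilde v}{\max_{B_{\tilde R}}\underline{v}_\lambda}\}$ — both satisfy the two required constraints and the difference is purely cosmetic; you also correctly observe that the finiteness of $\int Vw_+^2$ need not be assumed since Fatou delivers the bound a posteriori, a point the paper glosses over when it writes ``by the same arguments as \eqref{p02}.''
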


\begin{proof}
Define $\tilde{R}:=\max\{R,R'\}$, $\bar{v}:=\min\{1,\frac{\min_{B_{\tilde{R}}(0)}\tilde v}{\max_{B_{\tilde{R}}(0)}\underline{v}_\lambda}\}\frac{1}{\max\{1,\la\}}\underline{v}_\lambda$
and $w:=\bar{v}-\tilde v$. Clearly, $w\le0$ in $B_{\tilde{R}}(0)$ and $w$ weakly satisfies
\begin{align}\label{wq7}
  (-\De)^s w+Vw\le 0,\quad x\in \rn\backslash B_{\tilde{R}}(0).
\end{align}
Then by the same arguments as \eq{p02}, we get $w_+\le0$ in $\rn$, which  completes the proof.
\end{proof}

%Moreover, we give a general strong maximum principle, which will be used in the proof of Theorem \r{thm1.3}.

Now we are going to prove Theorem \r{thm1.3}.
 Without loss of  generality, we  fix $\va =1$. It suffices to consider the following equation
\begin{align}\label{wqj}
  (-\De)^su+V(x)u=u^{p-1},\,\,\, u>0, \quad x\in\rn.
\end{align}
%Denote
%$$H_V(\rn):=H_{V,1}^s(\rn)=\Big\{u\in \dot{H}^s(\rn)\Big| \int_{\rn}Vu^2<\wq\Big\}.$$

\begin{proof}[{\bf Proof of Theorem \r{thm1.3}}]
Suppose by contradiction that $u\in H^s_{V,1}(\rn)$ is a positive weak solution to \eq{wqj}.
By \cite[Proposition 4.1.1.]{dmv}, we have $u\in L^\wq(\rn)$. It follows by \cite[Proposition 2.9]{sil} that $u\in C(\rn)$.  Since $u$ is a weak solution to \eq{wqj}, we have
\begin{align}\label{sep}
  \int_{\rn}u^{p}=\|u\|^2_{\dot{H}^s(\rn)}+\int_{\rn}Vu^2<\wq.
\end{align}

Now we prove our nonexistence results under the  various assumptions on $V$ and $p$.

\noindent {\bf  The case under the assumption ($\mathcal{Q}'_1$)}, i.e.
$2<p<2+\frac{2s}{N-2s}$ and $\ds \lim_{|x|\to\wq}(1+|x|^{2s})V(x)=0$.

The assumption $\lim_{|x|\to\wq}(1+|x|^{2s})V(x)=0$ implies that for any $\epsilon>0$, there exists $R_\epsilon>0$ such that
$$V(x)\le \frac{\epsilon}{1+|x|^{2s}},\quad x\in \,\rn\backslash B_{R_\epsilon}(0).$$
 In the following, we can take $\epsilon>0$ small enough and $R>R_\epsilon$ if necessary.

Let $\mu_1\in (N-2s,N)$ be a parameter which will be determined later. By Proposition \r{tb} and Proposition \r{cc5} (iii), $w_{\mu_1}$ weakly satisfies
\begin{align}\label{qy9}
  (-\De)^s w_{\mu_1}+V(x)w_{\mu_1}\le-\frac{C_{\mu_1}}{|x|^{\mu_1+2s}}+\frac{\epsilon}{|x|^{\mu_1+2s}}\le0,\quad x\in \rn\backslash B_{R_1}(0)
\end{align}
for some $R_1>0$.
Then by Lemma \r{ws8}, there exists $C_1>0$ such that
\begin{align*}
  u\ge C_1w_{\mu_1}.
\end{align*}
Recalling \eq{wqj}, we obtain
\begin{align*}
  (-\De)^s u+V(x)u\ge \frac{C}{|x|^{\mu_1(p-1)}},\quad x\in \rn\backslash B_1(0).
\end{align*}
Since $p<2+\frac{2s}{N-2s}$, we can choose  $\mu_2\in (\frac{N-2s}{2},N-2s)$ and $\mu_1$ close to $N-2s$ such that
\begin{equation}\label{w60}
  N>\mu_2+2s>\mu_1(p-1).
\end{equation}
We deduce by Proposition \r{tb} and Proposition \r{cc5} (iii) that $w_{\mu_2}$ weakly satisfies
\begin{align*}
  (-\De)^s w_{\mu_2}+V(x)w_{\mu_2}\le \frac{C_{\mu_2}}{|x|^{\mu_2+2s}}+\frac{\epsilon}{|x|^{\mu_2+2s}}\le \frac{C}{|x|^{\mu_1(p-1)}},\quad x\in \rn\backslash B_{R_2}(0)
\end{align*}
for some $R_2>0$.
Consequently,  by Lemma \r{ws8}, there exists $C_2>0$ such that
\begin{align*}
  u\ge C_2w_{\mu_2}.
\end{align*}
Reviewing \eq{wqj}, we get
\begin{align*}
  (-\De)^s u+V(x)u\ge \frac{C}{|x|^{\mu_2(p-1)}},\quad x\in \rn\backslash B_R(0).
\end{align*}
Now we set $\mu_{i+1}:=\mu_i(p-1)-2s$, $i\ge 2$, i.e.,
$$\mu_{i}=(p-1)^{i-2}\Big(\mu_{2}-\frac{2s}{p-2}\Big)+\frac{2s}{p-2},\quad i\ge2.$$
Since $p-1>1$ and $\mu_2<N-2s<2s/(p-2)$, it holds that $\mu_{i+1}<\mu_{i}<N-2s$ for $i\ge2$ and $\mu_i\to-\wq$ as $i\to\wq$.

Fix $i\ge2$ such that $\mu_i>\frac{N-2s}{2}, \mu_{i+1}>\frac{N-2s}{2}$. We claim that there exist constants $C_{i},C_{i+1}>0$ such that
\begin{align}\label{dsn}
  u\ge C_{i+1}w_{\mu_{i+1}}\ \mathrm{if}\ u\ge C_{i}w_{\mu_{i}}.
\end{align}
Indeed, if $u\ge C_iw_{\mu_i}$, then \eq{wqj} implies
\begin{align*}
  (-\De)^s u+V(x)u\ge \frac{C}{|x|^{\mu_{i}(p-1)}},\quad x\in \rn\backslash B_1(0).
\end{align*}
On the other hand, Proposition \r{tb} and Proposition \r{cc5} (iii) yields that $w_{\mu_{i+1}}$ weakly satisfies
\begin{align*}
  (-\De)^s w_{\mu_{i+1}}+V(x)w_{\mu_{i+1}}\le \frac{C_{\mu_{i+1}}}{|x|^{\mu_{i+1}+2s}}+\frac{\epsilon}{|x|^{\mu_{i+1}+2s}}\le \frac{C}{|x|^{\mu_i(p-1)}},\quad x\in \rn\backslash B_{R_i}(0)
\end{align*}
for some $R_i>0$. Then the claim \eq{dsn} follows by  Lemma \r{ws8}.

As a result, for any $\mu>\frac{N-2s}{2}$, by finite iteration above, there exists a constant $D_\mu>0$ such that
\begin{align*}
  u\ge D_\mu w_{\mu},\quad x\in\rn.
\end{align*}
Therefore, choosing $\mu>\frac{N-2s}{2}$ such that $\mu p<N$, we get
\begin{align}\label{opw}
  \int_{\rn}u^{p}\ge C\int_{\rn}w_{\mu}^{p}=+\wq.
\end{align}
which contradicts to \eq{sep}. This completes the proof.

\noindent\noindent {\bf  The case under the assumption ($\mathcal{Q}'_2$) with $\omega =2s$}, i.e.
$2<p<2+\frac{2s}{N}$ and $\sup_{x\in\rn}(1+|x|^{2s})V(x)<\wq$.

The assumption $\sup_{x\in\rn}(1+|x|^{2s})V(x)<\wq$ means $V(x)\le \frac{C}{1+|x|^{2s}}$ for some constant $C>0$.
Let $\mu_1=N$. By Proposition \r{tb} and Proposition \r{cc5} (iii), $w_{N}$ weakly satisfies
\begin{align}\label{qy9}
  (-\De)^s w_{N}+V(x)w_{N}\le-\frac{C_{N}\ln|x|}{|x|^{N+2s}}+\frac{C}{|x|^{N+2s}}\le0,\quad x\in \rn\backslash B_{R_1}(0)
\end{align}
for some $R_1>0$. It follows by Lemma \r{ws8} that there exists $C_1>0$ such that
\begin{align*}
  u\ge C_1w_{N}.
\end{align*}
Now we set $\mu_{i+1}:=\mu_i(p-1)-2s$, $i\ge 1$, i.e.,
$$\mu_{i}=(p-1)^{i-1}\Big(N-\frac{2s}{p-2}\Big)+\frac{2s}{p-2},\quad i\ge1.$$
Since $p-1>1$ and $N<2s/(p-2)$, it holds that $\mu_{i+1}<\mu_{i}\le N$ for $i\ge1$ and $\mu_i\to-\wq$ as $i\to\wq$.
Then by the similar iteration process to \eq{dsn}, we conclude that for any $\mu>\frac{N-2s}{2}$, there exists a constant $D_\mu>0$ such that $u\ge D_\mu w_\mu$. Consequently, choosing $\mu>\frac{N-2s}{2}$ such that $\mu p<N$, we get
\begin{align}\label{opw}
  \int_{\rn}u^{p}\ge C\int_{\rn}w_{\mu}^{p}=+\wq,
\end{align}
which contradicts to \eq{sep}. This completes the proof.

\noindent\noindent {\bf  The case under the assumption ($\mathcal{Q}'_2$) with $\omega \in (0, 2s)$}, i.e.
$2<p<2+\frac{\om}{N+2s-\om}$ and $\sup_{x\in\rn}(1+|x|^{\om})V(x)<\wq$ with $\om\in(0,2s)$.

By the assumption $\sup_{x\in\rn}(1+|x|^{\om})V(x)<\wq$, $V(x)\le \frac{C_\om}{1+|x|^{\om}}$ for some constant $C_\om>0$.
Let $\mu_1\in(N+2s-\om,N+2s)$ be a parameter. By Proposition \r{tb} and Proposition \r{cc5} (iii), $w_{\mu_1}$ weakly satisfies
\begin{align}\label{qyq}
  (-\De)^s w_{\mu_1}+V(x)w_{\mu_1}\le-\frac{C_{\mu_1}}{|x|^{N+2s}}+\frac{C_\om}{|x|^{\mu_1+\om}}\le0,\quad x\in \rn\backslash B_{R_1}(0)
\end{align}
for some $R_1>0$. It follows by Lemma \r{ws8} that there exists $C_1>0$ such that
\begin{align*}
  u\ge C_1w_{\mu_1}.
\end{align*}
Recalling \eq{wqj}, we obtain
\begin{align*}
  (-\De)^s u+V(x)u\ge \frac{C}{|x|^{\mu_1(p-1)}},\quad x\in \rn\backslash B_1(0).
\end{align*}
Since $p<2+\frac{\om}{N+2s-\om}$, we choose  $\mu_2\in (N,N+2s-\om)$ and $\mu_1$ close to $N+2s-\om$ such that
\begin{equation}\label{w60}
  N+2s>\mu_2+\om>\mu_1(p-1).
\end{equation}
We deduce by Proposition \r{tb}, Proposition\r{cc5} (iii) and \eq{w60} that $w_{\mu_2}$ weakly satisfies
\begin{align*}
  (-\De)^s w_{\mu_2}+V(x)w_{\mu_2}\le \frac{C_{\mu_2}}{|x|^{N+2s}}+\frac{C_\om}{|x|^{\mu_2+\om}}\le \frac{C}{|x|^{\mu_1(p-1)}},\quad x\in \rn\backslash B_{R_2}(0)
\end{align*}
for some $R_2>0$.
Consequently,  by Lemma \r{ws8}, there exists $C_2>0$ such that
\begin{align*}
  u\ge C_2w_{\mu_2}.
\end{align*}
Now we set $\mu_{i+1}:=\mu_i(p-1)-\om$, $i\ge 2$, i.e.,
$$\mu_{i}=(p-1)^{i-2}\Big(\mu_2-\frac{\om}{p-2}\Big)+\frac{\om}{p-2},\quad i\ge2.$$
Since $p-1>1$ and $\mu_2<N+2s-\om<\frac{\om}{p-2}$, it holds that $\mu_{i+1}<\mu_{i}<N+2s-\om$ for $i\ge2$ and $\mu_i\to-\wq$ as $i\to\wq$.
The remain proof are similar to that in Case 1 by iteration, so we omit the details.

Consequently we complete the proof of Theorem \r{thm1.3}.
\end{proof}

\section{Decay estimates}\label{dey}
In this section, we further study the decay properties for positive weak solutions of \eq{eqs1.1} by some delicate iteration, and then complete the proof of Theorem \r{thm1.5} and Corollary \r{c30}.

\begin{proof}[{\bf} Proof of Theorem \r{thm1.5}]
Without loss of generality, we fix $\va=1$. Let $u\in H^s_{V,1}(\rn)$ be a positive weak solution of
\begin{align}\label{wqj'}
  (-\De)^su+V(x)u=u^{p-1}, \quad x\in\rn.
\end{align}
By regularity estimates as before, we have
$u\in L^\wq(\rn)\cap C(\rn)$.

\noindent{\bf Case 1: $p_*=2+\frac{2s}{N-2s}< p<2_s^*$ and $\sup_{x\in\rn}(1+|x|^{\om})V(x)<\wq$ with $\om>(N-2s)(p-2)>2s$}.

In this case, we can find a constant $C>0$ such that $V(x)\le \frac{C}{1+|x|^{\om}}$ in $\rn$.
Fix any $\mu\in(N-2s,N)$. By Proposition \r{tb} and Proposition \r{cc5} (iii), $w_{\mu}$ weakly satisfies
\begin{align}\label{qya}
  (-\De)^s w_{\mu}+V(x)w_{\mu}\le-\frac{C_{\mu}}{|x|^{\mu+2s}}+\frac{C_\om}{|x|^{\mu+\om}}\le0,\quad x\in \rn\backslash B_{R_1}(0)
\end{align}
for some $R_1>0$. Then by Lemma \r{ws8}, there exists a constant $D_\mu>0$ such that
\begin{align}\label{3th}
  u\ge D_\mu w_{\mu}.
\end{align}
Since $\om>(N-2s)(p-2)$ and $p<2_s^*$, we choose $\mu\in(N-2s,N)$  such that
\begin{equation}\label{e2c}
 N-2s+\om>\mu(p-1),\quad N+2s>\mu(p-1).
\end{equation}
From \eq{3th},
\begin{align*}
  (-\De)^su+V(x)u=u^{p-1}\ge \frac{C}{|x|^{\mu(p-1)}},\quad x\in \rn\backslash B_1(0).
\end{align*}
On the other hand, by Proposition \r{tb}, Proposition \r{cc5} (iii) and \eq{e2c}, $w_{N-2s}$ weakly satisfies
\begin{align*}
  (-\De)^sw_{N-2s}+V(x)w_{N-2s}\le \frac{C_{N-2s}}{|x|^{N+2s}}+\frac{C_\om}{|x|^{N-2s+\om}}\le \frac{C}{|x|^{\mu(p-1)}},\quad x\in \rn\backslash B_{R_2}(0)
\end{align*}
for some $R_2>0$. Then by Lemma \r{ws8}, there exists a constant $C>0$ such that
\begin{align*}
  u\ge C w_{N-2s}.
\end{align*}

\noindent{\bf Case 2: $p_*=2+\frac{2s}{N-2s}<p<2_s^*$ and $\sup_{x\in\rn}(1+|x|^{\om})V(x)<\wq$ with $2s<\om\le(N-2s)(p-2)$}.

It follows by \eq{qya} and Lemma \r{ws8} that for any $\mu>N-2s$,  there exists a constant $D_\mu>0$ such that
\begin{align*}
  u\ge D_\mu w_{\mu}.
\end{align*}

\noindent{\bf Case 3: $p_*=2+\frac{2s}{N}<p<2_s^*$ and $\sup_{x\in\rn}(1+|x|^{2s})V(x)<\wq$.}

It follows from \eq{qy9} and Lemma \r{ws8} that  there exists a constant $C>0$ such that
\begin{align*}
  u\ge C w_{N}.
\end{align*}

\noindent{\bf Case 4: $p_*=2+\frac{\om}{N+2s-\om}<p<2_s^*$ and $\sup_{x\in\rn}(1+|x|^{\om})V(x)<\wq$ with $\om\in[0,2s)$.}

There exists $C>0$ such that $V(x)\le \frac{C_\om}{1+|x|^{\om}}$ in $\rn$. Let $\la>0$ be a parameter.
By Proposition \r{tb} and Proposition \r{cc5} (iii), $v_{\la}(x):=w_{N+2s-\om}(\la x)$ weakly satisfies
\begin{align}\label{qya'}
  (-\De)^s v_{\la}+V(x)v_{\la}=&\la^{2s}(-\De)^s w_{N+2s-\om}(\la x)+V(x)w_{N+2s-\om}(\la x)\nonumber\\
  \le&\la^{2s}\frac{-C_{N+2s-\om}}{|\la x|^{N+2s}}+\frac{C_\om}{|x|^{\om}}\frac{1}{|\la x|^{N+2s-\om}}\nonumber\\
  =&\frac{1}{\la^{N}}\Big(\frac{-C_{N+2s-\om}}{|x|^{N+2s}}+\frac{\la^{\om-2s}C_\om}{|x|^{N+2s}}\Big)\le0,\quad x\in \rn\backslash B_{R_\la}(0)
\end{align}
provided that $\la$ is large enough. It follows by Lemma \r{ws8} that  there exists a constant $C>0$ such that
\begin{align*}
  u\ge C w_{N+2s-\om}(\la x).
\end{align*}

Summing up the above,  we complete the proof of Theorem \r{thm1.5}.
\end{proof}

By means of Theorem \r{thm1.5}, now we continue to prove Corollary \r{c30}.
\begin{proof}[{\it Proof of Corollary \r{c30}}]
For convenience, we discuss in separated cases.

\noindent{\bf Case 1: $p_*=2+\frac{2s}{N-2s}<p<2_s^*$ and  $\om>(N-2s)(p-2)>2s$.}

The proof is directly completed  by \eq{yyy} and Theorem \r{thm1.5} (i).

\noindent{\bf Case 2: $p_*=2+\frac{2s}{N-2s}<p<2_s^*$ and  $2s<\om<(N-2s)(p-2)$.}

Since $\om<(N-2s)(p-2)$, we can choose $\ga<N-2s$ sufficiently close to $N-2s$ such that
\begin{equation}\label{wq9}
  \om+N-2s<\ga(p-1).
\end{equation}
By \eq{yyy}, we have $u_\va\le \frac{C}{1+|x|^\ga}$. As a result,
\begin{align*}
  \va^{2s}(-\De)^su_\va+Vu_\va=u_\va^{p-1}\le \frac{C}{|x|^{\ga(p-1)}}\le \frac{C}{|x|^{\om+N-2s}},\quad |x|\ge1.
\end{align*}
By Proposition \r{tb} and Corollary \r{cc5} (iii), $w_{N-2s}(x)$ weakly satisfies
\begin{align*}
  \va^{2s}(-\De)^sw_{N-2s}+Vw_{N-2s}\ge \frac{C}{|x|^{\om+N-2s}},\quad |x|\ge1.
\end{align*}
It follows from Lemma \r{ws8} that
$$u_\va\le Cw_{N-2s}$$
for some $C>0$. On the other hand, the lower decay estimate is given by  Theorem \r{thm1.5} (ii).

\noindent{\bf Case 3: $p_*=2+\frac{2s}{N}<p<2_s^*$ and  $\om=2s$.}

The conclusion holds directly  by \eq{yyy} and Theorem \r{thm1.5} (iii).

\noindent{\bf Case 4: $p_*=2+\frac{\om}{N+2s-\om}<p<2_s^*$ and $\om\in[0,2s)$.}

Since $p>2+\frac{\om}{N+2s-\om}$, we can choose $\ga<N+2s-\om$ such that $\ga(p-1)> N+2s$. By Theorem \r{thm1.2}, we have
$$0<u_\va\le \frac{C\va^\ga}{|x|^{\ga}},\quad |x|\ge R_\ga.$$
Then
\begin{align*}
  \va^{2s}(-\De)^su_\va+Vu_\va=u_\va^{p-1}\le \frac{C}{|x|^{\ga(p-1)}}\le \frac{C}{|x|^{N+2s}},\quad |x|\ge R_\ga.
\end{align*}
Let $v_{\la}(x):=w_{N+2s-\om}(\la x)$. By Proposition \r{tb} and Corollary \r{cc5} (iii), $v_{\la}(x)$ weakly satisfies
\begin{align}\label{qia}
  \va^{2s}(-\De)^s v_{\la}+V(x)v_{\la}=&\va^{2s}\la^{2s}(-\De)^s w_{N+2s-\om}(\la x)+V(x)w_{N+2s-\om}(\la x)\nonumber\\
  \ge&\va^{2s}\la^{2s}\frac{-\tilde{C}_{N+2s-\om}}{|\la x|^{N+2s}}+\frac{C_1}{|x|^{\om}}\frac{1}{2|\la x|^{N+2s-\om}}\nonumber\\
  =&\frac{1}{\la^{N}}\Big(\frac{-\va^{2s}\tilde{C}_{N+2s-\om}}{|x|^{N+2s}}+\frac{\la^{\om-2s}C_1/2}{|x|^{N+2s}}\Big)\ge \frac{C_\la}{|x|^{N+2s}},\quad |x|\ge B_{R_\la}
\end{align}
for some $C_\la, R_\la>0$ provided that $\la>0$ is small enough. It follows from Lemma \r{ws8} that  there exists a constant $C>0$ such that
\begin{align}\label{3th'}
  u\le C w_{N+2s-\om}(\la x).
\end{align}
On the other hand, the lower decay estimate is a direct consequence  of  Theorem \r{thm1.5} (iv).

Then  the proof of Corollary \r{c30} is completed.
\end{proof}

\vspace{0.2cm}

\appendix

\section{}\label{sok}

\renewcommand{\theequation}{A.\arabic{equation}}
\setcounter{equation}{0}

\vskip 0.2cm
In this appendix,  we give a complete proof of Proposition \r{tb}.

\begin{proof}[Proof of Proposition \r{tb}]
The estimate of $\Fs w_\mu$ base  on the estimate of $\Fs h_\mu$, where $h_\mu=|x|^{-\mu}\ (x\neq0)$. So we first estimate $\Fs h_\mu$.

For any
given $x\in\rn$ such that $|x|>1$, by  changes of variable we have
\begin{align}\label{sj}
\frac{1}{2}\Fs h_\mu
=&\lim_{r\to0}\int_{\rn\setminus B_r(x)}\frac{|x|^{-\mu}-|y|^{-\mu}}{|x-y|^{N+2s}}\d y\quad (\mathrm{set}\ y=|x|y')\nonumber\\
=&\frac{1}{|x|^{\mu+2s}}\lim_{r\to0}\int_{\rn\setminus B_{r}(\vec{e}_1)}\frac{|y'|^{\mu}-1}{|y'|^{\mu}|y'-\vec{e}_1|^{N+2s}}\d y'\nonumber\\
=&\frac{1}{|x|^{\mu+2s}}\lim_{r\to0}\Big(\int_{B_1(0)\setminus B_{r}(\vec{e}_1)}\frac{|y|^{\mu}-1}{|y|^{\mu}|y-\vec{e}_1|^{N+2s}}\d y\nonumber\\
&\qquad\qquad\qquad+\int_{(\rn\setminus B_1(0))\setminus B_{r}(\vec{e}_1)}\frac{|y|^{\mu}-1}{|y|^{\mu}|y-\vec{e}_1|^{N+2s}}\d y\Big)
\end{align}
where $\vec{e}_1:=\frac{x}{|x|}$ is a unit vector.

For the coordinate transformation $y=\frac{y'}{|y'|^2}$ (a inversion of a sphere), we have
$$|y||y'|=1,\ dy=|y'|^{-2N}dy',\ |y-\vec{e}_1||y'|=|y'-\vec{e}_1|,$$
where we have used that
$$|y-\vec{e}_1|^2|y'|^2=(|y|^2-2y\cdot\vec{e}_1+1)|y'|^2=1-2y'\cdot\vec{e}_1+|y'|^2=|y'-\vec{e}_1|^2.$$
It follows that
\begin{align}\label{ssjj}
  &\int_{B_1(0)\setminus B_{r}(\vec{e}_1)}\frac{|y|^{\mu}-1}{|y|^{\mu}|y-\vec{e}_1|^{N+2s}}\d y \nonumber \\
  =&\int_{(\rn\backslash B_1(0))\setminus (B_{r}(\vec{e}_1))^*}\frac{1-|y'|^{\mu}}{|y'|^{N-2s}|y'-\vec{e}_1|^{N+2s}}\d y'  \nonumber \\
  =&\int_{(\rn\backslash B_1(0))\setminus B_{r}(\vec{e}_1))}\frac{1-|y|^{\mu}}{|y|^{N-2s}|y-\vec{e}_1|^{N+2s}}\d y+\int_{B_{r}(\vec{e}_1)\setminus (B_{r}(\vec{e}_1))^*}\frac{1-|y|^{\mu}}{|y|^{N-2s}|y-\vec{e}_1|^{N+2s}}\d y  \nonumber \\
  &-\int_{(B_{r}(\vec{e}_1))^*\setminus B_{r}(\vec{e}_1)}\frac{1-|y|^{\mu}}{|y|^{N-2s}|y-\vec{e}_1|^{N+2s}}\d y,
\end{align}
where
$$(B_{r}(\vec{e}_1))^*:=\Big\{\frac{y}{|y|^2}\mid y\in B_r(\vec{e}_1)\Big\}. $$
Since
$B_{r/2}(\vec{e}_1)\subset (B_{r}(\vec{e}_1))^*\subset B_{3r/2}(\vec{e}_1)\  \mathrm{as}\ r\to0,$
we have that
$$\frac{|1-|y|^{\mu}|}{|y|^{N-2s}|y-\vec{e}_1|^{N+2s}}\le O\big(\frac{1}{r^{N+2s-1}}\big),\quad y\in \big(B_{r}(\vec{e}_1)\setminus (B_{r}(\vec{e}_1))^*\big)\cup \big(B_{r}(\vec{e}_1))^*\setminus B_{r}(\vec{e}_1)\big).$$
On the other hand, we can verify that
$$|B_{r}(\vec{e}_1)\setminus (B_{r}(\vec{e}_1))^*|+|B_{r}(\vec{e}_1))^*\setminus B_{r}(\vec{e}_1)|=O(r^{N+1})\ \mathrm{as}\ r\to0,$$
it follows that
\begin{align*}
  \int_{B_{r}(\vec{e}_1)\setminus (B_{r}(\vec{e}_1))^*}\frac{1-|y|^{\mu}}{|y|^{N-2s}|y-\vec{e}_1|^{N+2s}}\d y=O(r^{2-2s})\to0\ \mathrm{as}\ r\to0,
\end{align*}
\begin{align*}
  \int_{(B_{r}(\vec{e}_1))^*\setminus B_{r}(\vec{e}_1)}\frac{1-|y|^{\mu}}{|y|^{N-2s}|y-\vec{e}_1|^{N+2s}}\d y=O(r^{2-2s})\to0\ \mathrm{as}\ r\to0.
\end{align*}
Substituting the estimates above into \eq{ssjj}, we obtain that

\begin{align}\label{ssjj1}
  &\int_{B_1(0)\setminus B_{r}(\vec{e}_1)}\frac{|y|^{\mu}-1}{|y|^{\mu}|y-\vec{e}_1|^{N+2s}}\d y \nonumber \\
  =&\int_{(\rn\backslash B_1(0))\setminus B_{r}(\vec{e}_1))}\frac{1-|y|^{\mu}}{|y|^{N-2s}|y-\vec{e}_1|^{N+2s}}\d y+O(r^{2-2s})\ \mathrm{as}\ r\to0.
  \end{align}
Putting (\ref {ssjj1}) into (\ref {sj}) yields
\begin{align}\label{ssjj2}
\frac{1}{2}\Fs h_\mu
=&\frac{1}{|x|^{\mu+2s}}\lim_{r\to0}\int_{(\rn\setminus B_1(0))\setminus B_{r}(\vec{e}_1)}\frac{|y|^{\mu}-1}{|y-\vec{e}_1|^{N+2s}}\Big(
\frac{1}{|y|^{\mu}}-\frac{1}{|y|^{N-2s}}\Big)\d y\nonumber\\
=&\frac{1}{|x|^{\mu+2s}}\int_{\rn\setminus B_1(0)}\frac{|y|^{\mu}-1}{|y-\vec{e}_1|^{N+2s}}\Big(
\frac{1}{|y|^{\mu}}-\frac{1}{|y|^{N-2s}}\Big)\d y
:=A_\mu\frac{1}{|x|^{\mu+2s}},
\end{align}
where $\vec{e}_1$ is not a singular point for the last integral in \eq{ssjj2}
since
$$\frac{|y|^{\mu}-1}{|y-\vec{e}_1|^{N+2s}}\Big(
\frac{1}{|y|^{\mu}}-\frac{1}{|y|^{N-2s}}\Big)=O\Big(\frac{1}{|x-\vec{e}_1|^{N+2s-2}}\Big)\ \ \mathrm{as}\ y\to \vec{e}_1.$$
Noting the asymptotic behavior of $\frac{|y|^{\mu}-1}{|y-\vec{e}_1|^{N+2s}}(
\frac{1}{|y|^{\mu}}-\frac{1}{|y|^{N-2s}})$ as $|y|\to\wq$,
it is easy to check that $A_\mu$  satisfies
\begin{equation}\label{sj0}
\begin{aligned}
\left\{
  \begin{array}{ll}
   A_\mu\in(0,+\wq),& \mathrm{if }\ 0<\mu<N-2s,\vspace{1mm}\\
   A_\mu=0, & \mathrm{if }\ \mu=N-2s;\vspace{1mm}\\
A_\mu\in(-\wq,0),& \mathrm{if }\ N-2s<\mu<N\vspace{1mm}\\
 A_\mu=-\wq,&\mathrm{if}\ \mu\ge N.
  \end{array}
\right.
\end{aligned}
\end{equation}

Now we are ready to estimate $\Fs w_{\mu}$ according to different cases stated in \eqref{sj0}.

\textbf{Case 1.} {\bf $\mu\in (0,N)\setminus\{N-2s\}$. }

By   changing   variable as in \eq{sj}, we have
\begin{equation}\label{sj1}
\begin{aligned}
&\frac{1}{2}\left|\Fs w_\mu-\Fs h_\mu\right|\\
\quad\le&\frac{1}{|x|^{\mu+2s}}\Big|\lim_{r\to0}\int_{\rn\setminus B_{r}(\vec{e}_1)}\frac{(|x|^{-2}+1)^{-\frac{\mu}{2}}-(|x|^{-2}+|y|^2)^{-\frac{\mu}{2}}-1+|y|^{-\mu}}{|y-\vec{e}_1|^{N+2s}}\d y\Big|\\
:=&\frac{1}{|x|^{\mu+2s}}\Big|\lim_{r\to0}\int_{\rn\setminus B_{r}(\vec{e}_1)}\frac{L(x,y)}{|y-\vec{e}_1|^{N+2s}}\d y\Big|.
\end{aligned}
\end{equation}
For any $M>2$ and $\rho'\in(r,1/2)$,  letting $|x|>2M$, we have
\begin{align}\label{wcl}
&\Big|\lim_{r\to0}\int_{\rn\setminus B_{r}(\vec{e}_1)}\int_{\rn}\frac{L(x,y)}{|y-\vec{e}_1|^{N+2s}}\d y\Big|\nonumber\\
\le&\int_{|y-\vec{e}_1|>M}\frac{|L(x,y)|}{|y-\vec{e}_1|^{N+2s}}\d y+\Big|\lim_{r\to0}\int_{r<|y-\vec{e}_1|<\rho'}\frac{L(x,y)}{|y-\vec{e}_1|^{N+2s}}\d y\Big|\nonumber\\
&+\int_{\{\rho'\le|y-\vec{e}_1|\le M\}\cap\{|y|\ge\rho'\}}\frac{|L(x,y)|}{|y-\vec{e}_1|^{N+2s}}\d y
+\int_{\{\rho'\le|y-\vec{e}_1|\le M\}\cap\{|y|<\rho'\}}\frac{|L(x,y)|}{|y-\vec{e}_1|^{N+2s}}\d y.
\end{align}
Clearly, $|L(x,y)|\le 4$ for $|y-\vec{e}_1|>M$, then
\begin{align}\label{wcn}
  \int_{|y-\vec{e}_1|>M}\frac{|L(x,y)|}{|y-\vec{e}_1|^{N+2s}}\d y\le 4\int_{|y-\vec{e}_1|>M}\frac{1}{|y-\vec{e}_1|^{N+2s}}\d y\le \frac{C}{M^{2s}}.
\end{align}
For $|y-\vec{e}_1|<\rho'$, we have $1/2 \le|y|\le 3/2$, then by Taylor expansion, we have
\begin{align}\label{ooe}
  |y|^{-\mu}-|\vec{e}_1|=-\mu \vec{e}_1\cdot(y-\vec{e}_1)+O(|y-\vec{e}_1|^2),
\end{align}
$$(|x|^{-2}+|y|^2)^{-\frac{\mu}{2}}-(|x|^{-2}+|\vec{e}_1|^2)^{-\frac{\mu}{2}}=
-\mu(|x|^{-2}+1)^{-\frac{\mu}{2}-1}\vec{e}_1\cdot(y-\vec{e}_1)+O(|y-\vec{e}_1|^2). $$
By symmetry,
\begin{align}\label{nnm}
  \int_{r<|y-\vec{e}_1|<\rho'}\frac{\vec{e}_1\cdot(y-\vec{e}_1)}{|y-\vec{e}_1|^{N+2s}}\d y=0.
\end{align}
Therefore,
\begin{align}\label{wxv}
  \Big|\lim_{r\to0}\int_{r<|y-\vec{e}_1|<\rho'}\frac{L(x,y)}{|y-\vec{e}_1|^{N+2s}}\d y\Big|
  \le C\lim_{r\to0}\int_{r<|y-\vec{e}_1|<\rho'}\frac{1}{|y-\vec{e}_1|^{N+2s-2}}\d y\le C(\rho')^{2-2s}.
\end{align}
For $y\in\{\rho'\le|y-\vec{e}_1|\le M\}\cap\{|y|\ge\rho'\}$, we have
$$|(|x|^{-2}+1)^{-\mu}-1|\le C|x|^{-2},$$
$$\big|(|x|^{-2}+|y|^2)^{-\frac{\mu}{2}}-|y|^{-\mu}\big|\le \frac{C}{(\rho')^{\mu+1}}|x|^{-2},$$
and thereby,
\begin{align}\label{osx}
  \int_{\{\rho'\le|y-\vec{e}_1|\le M\}\cap\{|y|\ge\rho'\}}\frac{|L(x,y)|}{|y-\vec{e}_1|^{N+2s}}\d y
  \le&C|x|^{-2}\Big(1+\frac{C}{(\rho')^{\mu+1}}\Big)\int_{\{\rho'\le|y-\vec{e}_1|\}}\frac{1}{|y-\vec{e}_1|^{N+2s}}\d y\nonumber\\
  \le&C|x|^{-2}\Big(1+\frac{C}{(\rho')^{\mu+1}}\Big)\frac{1}{(\rho')^{2s}}.
\end{align}
For $y\in\{\rho'\le|y-\vec{e}_1|\le M\}\cap\{|y|<\rho'\}$, then $|y-\vec{e}_1|\ge 1-|y|\ge 1/2$ and
$$|L(x,y)|=1-(|x|^{-2}+1)^{-\frac{\mu}{2}}+|y|^{-\mu}-(|x|^{-2}+|y|^2)^{-\frac{\mu}{2}}\le 1+|y|^{-\mu},$$
and consequently,
\begin{align}\label{onn}
  \int_{\{\rho'\le|y-\vec{e}_1|\le M\}\cap\{|y|<\rho'\}}\frac{|L(x,y)|}{|y-\vec{e}_1|^{N+2s}}\d y\le &\frac{1}{(1/2)^{N+2s}}\int_{\{|y|<\rho'\}}(1+|y|^{-\mu})\d y\nonumber\\
  =&C(\rho')^{N}+C(\rho')^{N-\mu}.
\end{align}
As a result, we conclude from \eq{wcl}-\eq{wcn} and \eq{wxv}-\eq{onn} that
\begin{align*}
  &\Big|\lim_{r\to0}\int_{\rn\setminus B_{r}(\vec{e}_1)}\frac{L(x,y)}{|y-\vec{e}_1|^{N+2s}}\d y\Big|\nonumber\\
  \le&C\Big(\frac{1}{M^{2s}}+(\rho')^{2-2s}+(\rho')^{N}+(\rho')^{N-\mu}+|x|^{-2}\big(1+\frac{1}{(\rho')^{\mu+1}}\big)\frac{1}{(\rho')^{2s}}\Big)
\end{align*}
for a constant $C>0$ independent of $M>2$ and $\rho'\in(0,1/2)$.
Letting $M\to+\wq$ and $\rho'\to0_+$, we have
\begin{align}\label{yfg}
 &\lim_{|x|\to\wq}\Big|\lim_{r\to0}\int_{\rn\setminus B_{r}(\vec{e}_1)}\frac{L(x,y)}{|y-\vec{e}_1|^{N+2s}}\d y\Big|\nonumber\\
 \le&\mathop{\lim}_{M\to+\wq\atop{\rho'\to0_+}}\lim_{|x|\to\wq}C\Big(\frac{1}{M^{2s}}+(\rho')^{2-2s}+(\rho')^{N}+(\rho')^{N-\mu}
 +|x|^{-2}\big(1+\frac{1}{(\rho')^{\mu+1}}\big)\frac{1}{(\rho')^{2s}}\Big)\nonumber\\
 =&0.
\end{align}
Then there exists $R_\mu>0$ such that
\begin{equation}\label{sj3}
\begin{aligned}
\Big|\lim_{r\to0}\int_{\rn\setminus B_{r}(\vec{e}_1)}\frac{L(x,y)}{|y-\vec{e}_1|^{N+2s}}\d y\Big|\le \frac{1}{2}|A_\mu|,\ \ |x|>R_\mu.
\end{aligned}
\end{equation}
Putting $\eq{sj}$--$\eq{sj1}$ and $\eq{sj3}$ together, we infer that
\begin{align}
&0<\frac{A_\mu}{2}\frac{1}{|x|^{\mu+2s}}\le\frac{1}{2}\Fs w_\mu\le\frac{3A_\mu}{2}\frac{1}{|x|^{\mu+2s}},\ \mathrm{if }\ |x|>R_\mu\ \mathrm{and}\ \mu\in(0,N-2s);\nonumber\\
&\frac{3A_\mu}{2}\frac{1}{|x|^{\mu+2s}}\le\frac{1}{2}\Fs w_\mu\le\frac{A_\mu}{2}\frac{1}{|x|^{\mu+2s}}<0,\ \mathrm{if }\ |x|>R_\mu\ \mathrm{and}\ \mu\in(N-2s,N).\nonumber
\end{align}

\textbf{Case 2.} $\mu\ge N$.

Also by changing variable as in \eq{sj}, there holds
\begin{align}\label{sj5}
\frac{1}{2}\Fs w_\mu=&\frac{1}{|x|^{\mu+2s}}\lim_{r\to0}\int_{\rn\setminus B_{r}(\vec{e}_1)}\frac{(|x|^{-2}+1)^{-\frac{\mu}{2}}-(|x|^{-2}+|y|^2)^{-\frac{\mu}{2}}}{|y-\vec{e}_1|^{N+2s}}\d y\nonumber\\
=&\frac{1}{|x|^{\mu+2s}}\Big(\lim_{r\to0}\int_{(\rn\setminus B_{1/2}(0))\backslash B_{r}(\vec{e}_1) }\frac{(|x|^{-2}+1)^{-\frac{\mu}{2}}-(|x|^{-2}+|y|^2)^{-\frac{\mu}{2}}}{|y-\vec{e}_1|^{N+2s}}\d y\nonumber\\
&\ +\int_{B_{1/2}(0)}\frac{(|x|^{-2}+1)^{-\frac{\mu}{2}}}{|y-\vec{e}_1|^{N+2s}}\d y
-\int_{B_{1/2}(0)}\frac{(|x|^{-2}+|y|^2)^{-\frac{\mu}{2}}}{|y-\vec{e}_1|^{N+2s}}\d y\Big).
\end{align}
Same as \eq{sj1}, we denote $L(x,y):=(|x|^{-2}+1)^{-\frac{\mu}{2}}-(|x|^{-2}+|y|^2)^{-\frac{\mu}{2}}-1+|y|^{-\mu}$.
For any $M>2$ and $\rho'\in(r,1/2)$,  letting $|x|>2M$, be the same arguments as \r{wcn}, \eq{wxv} and \eq{osx}, we have
\begin{align}\label{wcl}
&\Big|\lim_{r\to0}\int_{(\rn\setminus B_{1/2}(0))\backslash B_{r}(\vec{e}_1) }\frac{L(x,y)}{|y-\vec{e}_1|^{N+2s}}\d y\Big|\nonumber\\
\le&\int_{|y-\vec{e}_1|>M}\frac{|L(x,y)|}{|y-\vec{e}_1|^{N+2s}}\d y+\Big|\lim_{r\to0}\int_{r<|y-\vec{e}_1|<\rho'}\frac{L(x,y)}{|y-\vec{e}_1|^{N+2s}}\d y\Big|\nonumber\\
&+\int_{\{\rho'\le|y-\vec{e}_1|\le M\}\cap\{|y|\ge\frac{1}{2}\}}\frac{|L(x,y)|}{|y-\vec{e}_1|^{N+2s}}\d y\nonumber\\
\le&\frac{C}{M^{2s}}+C(\rho')^{2-2s}+C|x|^{-2}\frac{1}{(\rho')^{2s}},
\end{align}
which implies that
\begin{align}\label{sj6}
 &\lim_{|x|\to\wq}\Big|\lim_{r\to0}\int_{(\rn\setminus B_{1/2}(0))\backslash B_{r}(\vec{e}_1) }\frac{L(x,y)}{|y-\vec{e}_1|^{N+2s}}\d y\Big|\nonumber\\
 \le&\mathop{\lim}_{M\to+\wq\atop{\rho'\to0_+}}\lim_{|x|\to\wq}C\Big(\frac{1}{M^{2s}}+(\rho')^{2-2s}+|x|^{-2}\frac{1}{(\rho')^{2s}}\Big)=0.
\end{align}

In view of \eq{ooe} and \eq{nnm},  the following integral converges to a constant independent of $x$ as $r\to0$,
$$\lim_{r\to0}\int_{(\rn\setminus B_{1/2}(0))\backslash B_{r}(\vec{e}_1)}\frac{1-|y|^{-\mu}}{|y-\vec{e}_1|^{N+2s}}\d y:=C^{*}\in \R,$$
and thereby from \eq{sj6},
\begin{align}\label{evk}
 &\lim_{|x|\to\wq}\lim_{r\to0}\int_{(\rn\setminus B_{1/2}(0))\backslash  B_{r}(\vec{e}_1) }\frac{(|x|^{-2}+1)^{-\frac{\mu}{2}}-(|x|^{-2}+|y|^2)^{-\frac{\mu}{2}}}{|y-\vec{e}_1|^{N+2s}}\d y=C^*.
\end{align}
Obviously,
\begin{align}\label{sj7}
\int_{B_{1/2}(0)}\frac{(|x|^{-2}+1)^{-\frac{\mu}{2}}}{|y-\vec{e}_1|^{N+2s}}\d y\le 2^{2s}\omega_N(|x|^{-2}+1)^{-\frac{\mu}{2}}\le2^{2s}\omega_N.
\end{align}
Letting $|x|>4$, we have
\begin{align}\label{sj8}
&\int_{B_{1/2}(0)}\frac{(|x|^{-2}+|y|^2)^{-\frac{\mu}{2}}}{|y-\vec{e}_1|^{N+2s}}\d y\nonumber\\
\le&2^{N+2s}\int_{B_{1/2}(0)}(|x|^{-2}+|y|^2)^{-\frac{\mu}{2}}\d y
=2^{N+2s}|x|^{\mu-N}\int_{B_{|x|/2}(0)}\frac{1}{(1+|y|^2)^{\frac{\mu}{2}}}\d y\nonumber\\
\le&2^{N+2s}|x|^{\mu-N}\Big(\int_{B_1(0)}\frac{1}{(1+|y|^2)^{\frac{\mu}{2}}}+\omega_N\int^{|x|/2}_1\frac{1}{r^{\mu-N+1}}\d r\Big)\nonumber\\
\le&\left\{\begin{array}{ll}
2^{N+2s}(\omega_N\ln|x|+C),& \mu=N, \vspace{1mm}\\
 2^{N+2s}(\frac{\omega_N}{\mu-N}+C)|x|^{\mu-N},& \mu>N,
  \end{array} \right.
\end{align}
where $\om_N:=\int_{\partial B_1(0)}dS$.
On the other hand,
\begin{align}\label{sj9}
&\int_{B_{1/2}(0)}\frac{(|x|^{-2}+|y|^2)^{-\frac{\mu}{2}}}{|y-\vec{e}_1|^{N+2s}}\d y\nonumber\\
\ge&(1/2)^{N+2s}\int_{B_{1/2}(0)}(|x|^{-2}+|y|^2)^{-\frac{\mu}{2}}\d y=(1/2)^{N+2s}|x|^{\mu-N}\int_{B_{|x|/2}(0)}\frac{1}{(1+|y|^2)^{\frac{\mu}{2}}}\d y\nonumber\\
\ge&(1/2)^{N+2s}|x|^{\mu-N}\Big(\int_{B_1(0)}\frac{1}{(1+|y|^2)^{\frac{\mu}{2}}}+\frac{\omega_N}{2^\mu}\int^{|x|/2}_1\frac{1}{r^{\mu-N+1}}\d r\Big)\nonumber\\
\ge&\left\{\begin{array}{ll}
(1/2)^{N+2s}(\frac{\omega_N}{2^N}\ln|x|-C),& \mu=N, \vspace{1mm}\\
\frac{N\omega_N}{2^{2N+2s}}|x|^{\mu-N},& \mu>N.
  \end{array} \right.
\end{align}
Summing up the estimates $\eq{sj5}$--$\eq{sj9}$ above, there exists $R_\mu>0$ and $\tilde{C}_1,\tilde{C}_2, \tilde{C}_3, \tilde{C}_4>0$ such that
\begin{equation*}\label{sa}
\begin{aligned}
&-\frac{\tilde{C}_2\ln|x|}{|x|^{N+2s}}\le\frac{1}{2}\Fs w_\mu\le-\frac{\tilde{C}_1\ln|x|}{|x|^{N+2s}}<0,\ \mathrm{if }\ |x|>R_\mu\ \mathrm{and}\ \mu=N;\\
&-\frac{\tilde{C}_4}{|x|^{N+2s}}\le\frac{1}{2}\Fs w_\mu\le-\frac{\tilde{C}_3}{|x|^{N+2s}}<0,\ \mathrm{if }\ |x|>R_\mu\ \mathrm{and}\ \mu>N,
\end{aligned}
\end{equation*}
where
$$\tilde{C}_1=\frac{\omega_N}{2^{2N+2s+1}},\, \tilde{C}_2=2^{N+2s+1}\omega_N,\, \tilde{C}_3=\frac{\omega_N}{2^{2N+2s+1}},\,\tilde{C}_4=\frac{2^{N+2s+1}}{\mu-N}\omega_N.$$

\textbf{Case 3.} $\mu=N-2s$.

In this case, $w_\mu=(1+|x|^2)^{-\frac{N-2s}{2}}$ is the fundamental solution of the critical fractional equation
$$\Fs u=C_{N-2s}u^{2_s^*-1}$$
for some constant $C_{N-2s}>0$ (see \cite{clo}).

As a consequence,  the proof is completed.
\end{proof}

%\textbf{Data Availability Statement:} Data sharing is not applicable to this article as no new
%data were created or analyzed in this study.

\end{document}